\begin{document}
\newcommand\nvisom{\rotatebox[origin=cc] {-90}{$ \cong $}}
\newcommand{\Z}{\mathbb{Z}}
\newcommand{\Q}{\mathbb{Q}}
\newcommand{\F}{\mathbb{F}}
\newcommand{\kbar}{\overline{k}}
\newcommand{\pro}{\mathrm{Pro}}
\newcommand{\colim}{\mathrm{colim}}
\newcommand{\Fp}{\mathbb{F}_p}
\newcommand{\Hom}{\mathrm{Hom}}
\newcommand{\Spec}{\mathrm{Spec}}
\newcommand{\et}{\acute{e}t}
\newcommand{\Et}{\acute{E}t}
\newcommand{\Etr}{\acute{E}t^{\natural}_{/k}}
\newcommand{\xto}{\xrightarrow}
\newcommand{\Gal}{\mathrm{Gal}}
\newcommand{\nth}{n^{\text{th}}}

\newtheorem{thm}{Theorem}[section]
\newtheorem{cor}[thm]{Corollary}
\newtheorem{lemma}[thm]{Lemma}
\newtheorem{propn}[thm]{Proposition}
\newtheorem{con}[thm]{Conjecture}
\newtheorem{prop}[thm]{Property}
\theoremstyle{definition}
\newtheorem{example}[thm]{Example}
\newtheorem{defn}[thm]{Definition}
\newtheorem{rem}[thm]{Remark}
\newtheorem{qst}[thm]{Question}
\newtheorem{notation}[thm]{Notation}
\newtheorem{conj}[thm]{Conjecture}
\title[Power structures on the Grothendieck--Witt ring]{Power structures on the Grothendieck--Witt ring and the motivic Euler characteristic}
\author{Jesse Pajwani, Ambrus Pál}
\address{Department of Mathematical Sciences, University of Bath, North
Road, Bath, BA2 7AY, UK. \newline Department of Mathematics, 180 Queen's Gate, Imperial College, London, SW7 2AZ, United Kingdom}
\email{jp907@bath.ac.uk, a.pal@imperial.ac.uk}
\footnotetext[1]{\it{2000 Mathematics Subject Classification}. \rm{11E04, 11E70, 11E81, 19G12}}
\maketitle
\begin{abstract}
For $k$ a field, we construct a power structure on the Grothendieck--Witt ring of $k$ which has the potential to be compatible with symmetric powers of varieties and the motivic Euler characteristic. We then show our power structure is compatible with the variety power structure when we restrict to varieties of dimension $0$, using techniques of Garibaldi, Merkurjev and Serre about cohomological invariants.
\end{abstract}
\tableofcontents
\section{Introduction}
\subsection{Overview}

Let $k$ be a field of characteristic $\neq 2$. Recent work such as \cite{Le}, \cite{LPLS}, \cite{5author} and \cite{YZ} studies an arithmetic refinement of the Euler characteristic of a variety to an invariant which we refer to as the \emph{motivic Euler characteristic}.
\begin{defn}
Define $\widehat{\mathrm{W}}(k)$ to be the \emph{Grothendieck--Witt ring} of $k$, i.e. the ring generated by the symbols $[q]$ where $q$ is a non degenerate quadratic form over $k$ subject to the relations:
\begin{enumerate}
\item If $q \cong q'$, then $[q] = [q']$,
\item $[q] + [q'] = [q \oplus q']$,
\item $[q][q'] = [q \otimes q']$.
\end{enumerate}
For $\alpha \in k^\times$ let $\langle \alpha \rangle$ be the quadratic form on $k$ given by sending $x$ to $ax^2$. Then it is a well known fact, e.g. Theorem 4.1 of \cite{La}, that the symbols $\langle \alpha \rangle$ generate $\widehat{\mathrm{W}}(k)$. Write $\mathbb{H}$ for the hyperbolic form $\mathbb{H} := \langle 1 \rangle + \langle -1 \rangle$, and define the \emph{Witt ring} $\mathrm{W}(k) := \widehat{\mathrm{W}}(k)/ \langle \mathbb{H} \rangle$.

Let $K_0(\mathrm{Var}_k)$ be the \emph{Grothendieck ring of varieties} over $k$. That is, $K_0(\mathrm{Var}_k)$ is the ring generated by symbols $[X]$ where $X$ is a variety over $k$, subject to the following three relations.
\begin{enumerate}
\item If $X \cong X'$, then $[X] = [X']$,
\item if $U$ is an open subvariety of $X$, then $[X] = [U] + [X \setminus U]$,
\item if $X,Y$ are two varieties, then $[X] [Y] = [(X \times_k Y)_{\mathrm{red}}]$.
\end{enumerate}
Let $K_0(\Et_k)$ denote the subring of $K_0(\mathrm{Var}_k)$ which is generated by varieties of dimension $0$.

Let $X$ be a smooth projective variety over $k$. The cup product composed with the trace map gives rise to a quadratic form on the algebraic de Rham cohomology in even degrees, so we obtain a natural element of $\widehat{\mathrm{W}}(k)$, the \emph{de Rham Euler characteristic}:
$$
\chi^{dR}(X) := [ H^{2*}_{dR}(X/k) ] - \frac12 \cdot \mathrm{dim}_k ( H^{2*+1}_{dR}(X/k)) \cdot \mathbb{H} \in \widehat{\mathrm{W}}(k).
$$
Theorem 2.13 of \cite{5author} tells us that if $\mathrm{char}(k)=0$ the assignment $[X] \mapsto \chi^{dR}(X)$ on smooth projective varieties extends uniquely to a ring homomorphism
$$
\chi^{mot}: K_0(\mathrm{Var}_k) \to \widehat{\mathrm{W}}(k),
$$
which we call the \emph{motivic Euler characteristic}. If $A$ is a finite dimensional étale algebra over $k$, then $\chi^{mot}(\Spec(A)) = [\mathrm{Tr}_A]$, where $\mathrm{Tr}_A$ is the trace form. Therefore restricting $\chi^{mot}$ to $K_0(\Et_k)$ recovers the trace homomorphism, which also makes sense in positive characteristic. When the argument we are making only holds for varieties of dimension $0$, we will write $\mathrm{Tr}: K_0(\Et_k) \to \widehat{\mathrm{W}}(k)$ to make this clear. 
\end{defn}
The motivic Euler characteristic is a well studied ring homomorphism, and is the main object of study in the papers \cite{5author} and \cite{YZ}. It should be noted that $\chi^{mot}$ has an alternative definition in terms of motivic homotopy theory, which also allows us to define a version of $\chi^{mot}$ in positive characteristic (see Definition 2.7 of \cite{Az}). In \cite{5author}, the above definition is obtained from the motivic homotopy definition by applying Theorem 1.4 of \cite{LR} to compare the definition from motivic homotopy with Hodge cohomology, and then applying Theorem 3.1 of \cite{Bi} to show that this extends to a ring homomorphism. For the purpose of our paper, the above definition suffices, see Remark $\ref{positivecharproblems}$ for a further discussion about the positive characteristic case.

This paper is concerned with the compatibility of $\chi^{mot}$ with additional structures on the rings in question. Let $R$ be a commutative ring with unity. A power structure on $R$ is given by functions $\phi_n:  R \to R$ for all $n \in \mathbb{Z}_{\geq 0}$ satisfying certain axioms as in Corollary $\ref{powerfns}$. If $R,R'$ are rings with power structures $\phi_n, \phi'_n$, then a homomorphism $f: R \to R'$ respects the power structures if $f \circ \phi_n = \phi'_n \circ f$ for all $n$. If $k$ is a field of characteristic $0$, then for $R=K_0(\mathrm{Var}_k)$, symmetric powers of varieties give rise to a natural power structure, $S_n$, given by $S_i([X]) = [X^{(i)}]$. Moreover, this power structure restricts to give a power structure on $K_0(\Et_k)$, which we can also define in positive characteristic. This paper is concerned with the following power structure on $\widehat{\mathrm{W}}(k)$.
\begin{thm}[Corollary $\ref{finalpstruc}$]
Define $t_\alpha$ to be the element of $\widehat{\mathrm{W}}(k)$ given by $\langle 2 \rangle + \langle \alpha \rangle - \langle 1 \rangle - \langle 2\alpha\rangle$. Then there is a unique power structure on $\widehat{\mathrm{W}}(k)$ such that for any $\alpha \in k^\times$
$$
a_i(\langle \alpha \rangle) = \langle \alpha^i \rangle + \frac{i(i-1)}{2} (t_\alpha).
$$
\end{thm}
This power structure is not a standard power structure as discussed in Remark $\ref{surprising}$. Of particular note is the term $t_\alpha$: a $2$-torsion element in $\widehat{\mathrm{W}}(k)$. We study this term more closely in subsection $\ref{properties}$. In particular, Corollary $\ref{classicalpstruc}$ gives a cohomological condition on the mod $2$-Galois cohomology of $k$ which is equivalent to the vanishing of $t_\alpha$, and Corollary $\ref{2exoticpower}$ shows that this term vanishes for a number field $k$ if and only if $\sqrt{2} \in k$. 

The reason we introduce the above power structure is the first main theorem of this paper.
\begin{thm}[Corollary $\ref{maincor1}$]
Suppose $\mathrm{char}(k) = 0$ (resp. $\mathrm{char}(k) \neq 2$). Suppose that there exists a power structure $b_\bullet$ on $\widehat{\mathrm{W}}(k)$ such that $\chi^{mot}$ (resp. $\mathrm{Tr}$) respects the power structures on $K_0(\mathrm{Var}_k) \to \widehat{\mathrm{W}}(k)$ (resp. $K_0(\Et_k) \to \widehat{\mathrm{W}}(k)$. Then $b_\bullet=a_\bullet$.
\end{thm}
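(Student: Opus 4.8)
The strategy is to show that compatibility with $\chi^{mot}$ forces the values of $b_\bullet$ on the generators $\langle\alpha\rangle$, $\alpha\in k^\times$, to agree with those of $a_\bullet$, and then to conclude by the uniqueness clause of Corollary~\ref{finalpstruc}. Since $\chi^{mot}$ restricts on $K_0(\Et_k)$ to the trace homomorphism $\mathrm{Tr}$, and the symmetric power structure on $K_0(\mathrm{Var}_k)$ restricts to the one on $K_0(\Et_k)$, any $b_\bullet$ as in the hypothesis satisfies $b_i(\mathrm{Tr}_A)=\mathrm{Tr}\big((\Spec A)^{(i)}\big)$ for every finite étale $k$-algebra $A$ and every $i\geq 0$. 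Hence it suffices to show that this family of identities already determines every $b_i(\langle\alpha\rangle)$; in particular the whole argument takes place inside the dimension-$0$ part of $K_0(\mathrm{Var}_k)$, and $\chi^{mot}$ could be replaced throughout by $\mathrm{Tr}$.

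First I would record two inputs. Taking $A=k$ gives $(\Spec k)^{(i)}=\Spec k$, so $b_i(\langle 1\rangle)=\langle 1\rangle$ for all $i$, i.e.\ $\sum_i b_i(\langle 1\rangle)t^i=\langle 1\rangle/(1-t)$ in $\widehat{\mathrm{W}}(k)[[t]]$. Next, for a non-square $\alpha\in k^\times$ put $L=k(\sqrt\alpha)$, so that $\mathrm{Tr}_L=\langle 2\rangle+\langle 2\alpha\rangle$; inspecting the two geometric points of $\Spec L$, the multisets of size $i$ on them, and the $\Gal(L/k)$-action, one finds that $(\Spec L)^{(i)}$ is a disjoint union of $(i+1)/2$ copies of $\Spec L$ when $i$ is odd, and of $i/2$ copies of $\Spec L$ together with one copy of $\Spec k$ when $i$ is even. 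Thus $\mathrm{Tr}\big((\Spec L)^{(i)}\big)$ is an explicit element of $\widehat{\mathrm{W}}(k)$, and the series $\sum_i b_i(\langle 2\rangle+\langle 2\alpha\rangle)t^i=\sum_i\mathrm{Tr}\big((\Spec L)^{(i)}\big)t^i$ is known.

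Now I would invoke the additivity axiom of a power structure, which in generating-series form reads $\sum_i b_i(x+y)t^i=\big(\sum_i b_i(x)t^i\big)\big(\sum_i b_i(y)t^i\big)$; every series occurring here has unit constant term $\langle 1\rangle$, so they may be divided freely in $\widehat{\mathrm{W}}(k)[[t]]$. Taking $\alpha$ in the square class of $2$---say $\alpha=2$, assuming $2$ is not a square, the square case being trivial since then $\langle 2\rangle=\langle 1\rangle$---the element $\langle 2\alpha\rangle=\langle 4\rangle=\langle 1\rangle$ is already known, and the additivity relation for $\langle 2\rangle+\langle 1\rangle=\mathrm{Tr}_{k(\sqrt 2)}$ then determines $\sum_i b_i(\langle 2\rangle)t^i$. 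Feeding this back, for an arbitrary non-square $\alpha$ the additivity relation for $\langle 2\rangle+\langle 2\alpha\rangle=\mathrm{Tr}_{k(\sqrt\alpha)}$, divided by the now-known series for $\langle 2\rangle$, determines $\sum_i b_i(\langle 2\alpha\rangle)t^i$. Since every square class of $k^\times$ other than those of $1$ and of $2$ has the form $2\alpha$ for some non-square $\alpha$, this pins down $b_i(\langle\beta\rangle)$ for all $\beta\in k^\times$ and all $i$. A direct manipulation of the resulting generating functions---using $\langle\beta\rangle^2=\langle 1\rangle$ and the identity $2\langle 2\rangle=2\langle 1\rangle$ in $\widehat{\mathrm{W}}(k)$ (which holds because $2=1^2+1^2$)---then identifies the outcome with the closed formula for $a_i(\langle\beta\rangle)$ of Corollary~\ref{finalpstruc}. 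So $b_\bullet$ and $a_\bullet$ agree on every $\langle\beta\rangle$, and the uniqueness in Corollary~\ref{finalpstruc} yields $b_\bullet=a_\bullet$.

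The genuinely delicate step is the last one: carrying out the generating-function bookkeeping and, above all, checking that the values forced by $\mathrm{Tr}$-compatibility really coincide with the prescribed formula for $a_\bullet$. This rests on exactly the non-formal quadratic-form identities in $\widehat{\mathrm{W}}(k)$---such as $2\langle 2\rangle=2\langle 1\rangle$, and more generally $2\langle 2c\rangle=2\langle c\rangle$---that already underlie the construction of $a_\bullet$ in Corollary~\ref{finalpstruc}; by comparison the geometric ingredient (the decomposition of $(\Spec L)^{(i)}$ into closed points and the computation of its trace form) and the reduction to dimension $0$ are routine. If one already knows that $a_\bullet$ is itself compatible with $\mathrm{Tr}$ on $K_0(\Et_k)$, one can instead run the computation above for $a_\bullet$ as well: it produces the same forced values, so $b_i(\langle\beta\rangle)=a_i(\langle\beta\rangle)$ directly, bypassing any need to expand the explicit formula.
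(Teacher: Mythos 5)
Your proposal is correct and follows essentially the same route as the paper: reduce to $\mathrm{Tr}$ on quadratic étale algebras, compute the symmetric powers $(\Spec L)^{(i)}$ (this is the paper's Lemma~\ref{quadratic}), pin down $b_i(\langle 2\rangle)$ from $k(\sqrt 2)$ (the paper's Lemma~\ref{2power} with $P_{1/2}$), then solve for $b_i(\langle\beta\rangle)$ by dividing generating series (the paper's Corollary~\ref{ansdefn}), and appeal to Corollary~\ref{finalpstruc} for well-definedness and to additivity for uniqueness on all of $\widehat{\mathrm{W}}(k)$. The only difference is presentational: you work with formal inverses in $\widehat{\mathrm{W}}(k)[[t]]$ where the paper runs an explicit induction on $n$.
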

The proof of this theorem is split into two parts. Firstly, we show in Theorem $\ref{uniquepstruc}$ that any such $b_\bullet$ would be unique. We can then compute the value of $b_n(\langle \alpha \rangle)$ by looking at quadratic étale algebras, and do this in Corollary $\ref{quadcalc}$. However this doesn't imply that this gives a well defined power structure. This is resolved in Corollary $\ref{finalpstruc}$ where we use Theorem 4.1 of \cite{La} to show these functions are well defined and satisfy the necessary relations to give a well defined power structure. The computational need for the term $t_\alpha$ in the power structure is apparent from Corollary $\ref{ansdefn}$, and a potential explanation is discussed in Remark $\ref{Spheres}$.

It was pointed out to the authors by Kirsten Wickelgren the compatibility of these power structures has a link to a spectrum level enhancement of $\chi^{mot}$ coming from work of R{\"o}ndigs in \cite{R2} and Nanavaty in \cite{Na}. Determining whether a compatible power structure on $\widehat{\mathrm{W}}(k)$ exists may help to provide some answer to Question 1.14 of \cite{5author}, which asks what properties this spectrum level enhancement has. We would expect $\chi^{mot}$ to respect the power structures if the spectrum level map from \cite{Na} is a map of highly structured ring spectra. While we cannot show that $\chi^{mot}$ does respect the power structures above, we can when we restrict to $K_0(\Et_k)$, which is the second main theorem of this paper.
\begin{thm}[Corollary $\ref{maincor2}$]
Equip $K_0(\Et_k)$ and $\widehat{\mathrm{W}}(k)$ with the power structures above. Then the restriction of $\chi^{mot}$ to $K_0(\Et_k)$ respects the power structures on both rings.
\end{thm}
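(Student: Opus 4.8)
The plan is to reduce the statement to a computation on the additive generators of $K_0(\Et_k)$ and then to match the two sides by Galois descent. Recall first that the restriction of $\chi^{mot}$ to $K_0(\Et_k)$ is the trace homomorphism $\mathrm{Tr}$, a surjective ring homomorphism with $\mathrm{Tr}([\Spec A]) = [\mathrm{Tr}_A]$. Set $\mathrm{Exp}_S(x) := \sum_{n \ge 0} S_n(x)\, t^n$ and $\mathrm{Exp}_a(w) := \sum_{n \ge 0} a_n(w)\, t^n$; a power structure in particular makes $r \mapsto \sum_n \phi_n(r)t^n$ a homomorphism from the additive group of the ring to $(1 + tR[[t]], \cdot)$. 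Hence both $x \mapsto \sum_n \mathrm{Tr}(S_n(x))t^n$ (the composite of $\mathrm{Exp}_S$ with the coefficientwise extension of the ring map $\mathrm{Tr}$) and $x \mapsto \mathrm{Exp}_a(\mathrm{Tr}(x))$ are group homomorphisms $(K_0(\Et_k), +) \to (1 + t\widehat{\mathrm{W}}(k)[[t]], \cdot)$, and $\mathrm{Tr}$ respects the power structures if and only if these two homomorphisms agree. Since $K_0(\Et_k)$ is generated as an abelian group by the classes $[\Spec L]$ of finite separable field extensions $L/k$ (every finite étale $k$-algebra being a finite product of such), it therefore suffices to prove, for each such $L$,
$$
\sum_{n \ge 0} \mathrm{Tr}\bigl([\mathrm{Sym}^n \Spec L]\bigr)\, t^n \;=\; \mathrm{Exp}_a\bigl([\mathrm{Tr}_L]\bigr)(t) \qquad \text{in } \widehat{\mathrm{W}}(k)[[t]].
$$
For $[L:k] \le 2$ this is, up to bookkeeping, the content of Corollary $\ref{quadcalc}$; but $K_0(\Et_k)$ is \emph{not} generated as a ring by $\Z$ together with quadratic extension classes (the class of a separable cubic extension already lies outside that subring), so the case of arbitrary $L$ genuinely has to be handled.

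For the right-hand side I would diagonalise $\mathrm{Tr}_L \cong \langle b_1 \rangle \oplus \cdots \oplus \langle b_d \rangle$ with $d = [L:k]$; since $\mathrm{Exp}_a$ is a homomorphism, $\mathrm{Exp}_a([\mathrm{Tr}_L]) = \prod_{j=1}^d \mathrm{Exp}_a(\langle b_j \rangle)$, and the defining formula for $a_\bullet$ yields the closed form
$$
\mathrm{Exp}_a(\langle \beta \rangle)(t) = \frac{1}{1 - \langle \beta \rangle t} + c_\beta \cdot \frac{t^2}{(1-t)^3}, \qquad c_\beta := \langle 2 \rangle + \langle \beta \rangle - \langle 1 \rangle - \langle 2\beta \rangle.
$$
A structural point I would exploit is that $\langle 1, 1 \rangle \cong \langle 2, 2 \rangle$ (via $(x,y) \mapsto (x+y, x-y)$), so $\varepsilon := \langle 2 \rangle - \langle 1 \rangle$ satisfies $2\varepsilon = 0$ and $\varepsilon^2 = 0$, and $c_\beta = (\langle 1 \rangle - \langle \beta \rangle)\varepsilon$ lies in the square-zero ideal $J := \varepsilon\widehat{\mathrm{W}}(k)$. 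Consequently the product $\prod_j \mathrm{Exp}_a(\langle b_j \rangle)$ collapses to its "classical part" $\prod_j (1 - \langle b_j \rangle t)^{-1}$ plus a single $J$-linear correction term.

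For the left-hand side I would argue by Galois descent: with $S := \Spec L(\kbar)$, a transitive $\Gal(\kbar/k)$-set of cardinality $d$, one has $\mathrm{Sym}^n \Spec L = \coprod_{\mathcal{O}} \Spec L_{\mathcal{O}}$, the coproduct running over the Galois orbits $\mathcal{O}$ of the set $\mathrm{Sym}^n S$ of size-$n$ multisets, where $L_{\mathcal{O}}$ is the fixed field of the stabiliser of a point of $\mathcal{O}$; hence $\mathrm{Tr}([\mathrm{Sym}^n \Spec L]) = \sum_{\mathcal{O}} [\mathrm{Tr}_{L_{\mathcal{O}}}]$, a sum of trace forms of subfields of the Galois closure of $L$. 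Grouping the multisets according to their multiplicity partition exhibits $\mathrm{Sym}^n S$ as a disjoint union of quotients of ordered configuration spaces of $S$, which via inclusion--exclusion rewrites $[\mathrm{Sym}^n \Spec L]$ in terms of the classes $[\Spec L]^j = [\Spec L^{\otimes j}]$ for $j \le n$.

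The main obstacle is the final matching of the two sides for arbitrary $L$, and I would organise it along the filtration by $J$. Modulo $J$ the functions $a_n$ reduce to $a_n(\langle \alpha \rangle) \equiv \langle \alpha \rangle^n$ and $\langle 2\alpha \rangle \equiv \langle \alpha \rangle$, so the identity becomes a statement about trace forms of symmetric powers that is insensitive to scaling by $2$; I expect this to follow from multiplicativity of $\mathrm{Exp}_S$, the descent description above, and the classical fact (Serre; Conner--Perlis) that the Witt invariants of $\mathrm{Tr}_M$ are controlled by the Galois permutation representation attached to $M$. One then lifts to $\widehat{\mathrm{W}}(k)$: because $J^2 = 0$ the discrepancy between the two sides is additive, so it is enough to compare their $J$-linear parts, again a bounded, representation-theoretic computation. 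The real difficulty — and the reason no naive induction on $[L:k]$ is available — is that $\mathrm{Sym}^n \Spec L$ produces field extensions of unbounded degree; the configuration-space/inclusion--exclusion rewriting in terms of $[\Spec L^{\otimes j}]$ is what keeps each graded piece finite-dimensional and, I expect, reduces the problem to the known low-degree cases together with a manageable first-order correction.
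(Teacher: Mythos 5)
Your plan correctly identifies the reduction to single field extensions $L/k$, the trace-form description of $\chi^{mot}$ on $K_0(\Et_k)$, the square-zero ideal $J = \varepsilon\widehat{\mathrm{W}}(k)$ with $\varepsilon = \langle 2\rangle - \langle 1\rangle$, and the fact that $\widehat{\mathrm{W}}(k)$ is not generated by quadratic classes as a ring. However, the final and decisive step is left as a plausibility claim rather than an argument, and that is precisely where the paper has to invest the bulk of its work. You write that, modulo $J$, the identity should ``follow from multiplicativity of $\mathrm{Exp}_S$, the descent description above, and the classical fact (Serre; Conner--Perlis) that the Witt invariants of $\mathrm{Tr}_M$ are controlled by the Galois permutation representation,'' and then that lifting to $\widehat{\mathrm{W}}(k)$ is ``a bounded, representation-theoretic computation.'' Neither assertion is substantiated: the Serre/Conner--Perlis theory tells you about the trace form of a single \'etale algebra, not about the comparison between $\mathrm{Tr}_{A^{(n)}}$ and a symmetric-power operation applied to $\mathrm{Tr}_A$, and there is no obvious reason the mod-$J$ statement for arbitrary $L$ reduces to ``known low-degree cases.'' The inclusion--exclusion/configuration-space rewriting is mentioned but not carried out, and it is unclear how it would close the argument.

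The paper's route makes this concrete in two stages you bypass entirely. First, it proves compatibility not just for quadratic algebras but for biquadratic ones by a direct calculation (Lemma \ref{anbiquadratic}), and then extends to all multiquadratic \'etale algebras via a Galois-twisting argument using the equivariant Euler characteristic machinery of \cite{YZ} (Corollary \ref{inductionquad}, Corollary \ref{MQRes}). Second, and crucially, it observes that $A \mapsto a_n(\mathrm{Tr}_A) - \mathrm{Tr}_{A^{(n)}}$ defines a Witt-valued invariant of \'etale algebras in the sense of \cite{GMS}, and invokes Theorem 29.1 of \cite{GMS}, which says that such an invariant vanishing on all multiquadratic algebras vanishes identically; the rank count (Lemma \ref{dimension}) then upgrades the Witt-ring identity to one in $\widehat{\mathrm{W}}(k)$. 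This theorem of Garibaldi--Merkurjev--Serre is the nontrivial external input your sketch needs a substitute for, and nothing in your proposal supplies one: your mod-$J$ / $J$-linear splitting is sound bookkeeping, but the hard content --- detecting the invariant on a manageable subclass of \'etale algebras and propagating to all of them --- is exactly what is missing. If you want to pursue your route, you should either prove the multiquadratic case (which requires the twisting argument or something equivalent) and then cite GMS Theorem 29.1 as the paper does, or else give a genuine replacement for that detection theorem, stated precisely and proved.
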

This theorem is proven in three parts. Firstly, we show by hand that the result is true for all quadratic and biquadratic étale algebras, i.e., if $A$ is a biquadratic étale algebra, then $\chi^{mot} ( S_n(\Spec(A))) = a_n(\chi^{mot}(\Spec(A))$. We then use a twisting argument from chapter 3 of \cite{YZ} in order to extend this result to all multiquadratic étale algebras. Finally, we appeal to an argument from \cite{GMS} in order to extend this to all étale algebras.

\subsection{Acknowledgements}
The authors would like to thank Stephen McKean for his useful discussions on the subject, and for initially raising the question of whether $\chi^{mot}$ respects the power structures to us. We would also like to thank Johannes Nicaise, who independently raised this with the authors, as well as the anonymous referee for this paper for additional context about the behaviour of $\chi^{mot}$ in characteristic $p$. The first author would also like to thank Kirsten Wickelgren for helpful discussions, as well as Kevin Buzzard for his careful proofreading of this work as part of the first author's thesis. This work was undertaken while the first author was supported by the London School of Geometry and Number Theory under the Engineering and Physical Sciences Research Council grant number [EP/S021590/1],  and he would also like to thank Imperial College London.
\section{Power structures on rings}
Following \cite{GZLMH}, we recall some definitions and properties of power structures on rings.
\begin{defn}\label{pstrucdef}
Let $R$ be a commutative ring. A \emph{power structure} on $R$ is given by a map of sets $(1+t\cdot R[[t]]) \times R \to 1+t\cdot R[[t]]$, which we write as $(f(t), r) \mapsto (f(t))^r$, such that:
\begin{enumerate}
\item $f(t)^0 = 1$,
\item $f(t)^1 = f(t)$,
\item $( f(t) \cdot g(t))^r = f(t)^r \cdot g(t)^r$,
\item $f(t)^{r+s} = f(t)^r \cdot f(t)^r$,
\item $f(t)^{rs} = (f(t)^r)^s$,
\item $(1+t)^m = 1+mt + $ terms of order $t^2$,
\item If $g(t) = f(t)^m$, then $f(t^i)^m = g(t^i)$ for any positive integer $i$.
\end{enumerate}
A power structure is \emph{finitely determined} if for any $N>0$, there exists $M >0$ such that if $f(t) \in 1+t\cdot R[[t]]$ and $m \in R$, we may determine $f(t)^m \pmod{t^N}$ solely from $m$ and $f(t) \pmod{t^M}$. For the rest of this paper, all power structures will be finitely determined, and we will simply refer to ``power structures" to mean ``finitely determined power structures".

Let $S_R := \{ f \in 1 + t \cdot R[[t]]: f(t) = (1-t^i) \text{ for some i}\}$. A \emph{pre-power structure} on $R$ is a map $S_R \times R \to 1+t\cdot R[[t]]$ given by $((1-t^i), r) \mapsto (1-t^i)^r$ such that:
\begin{enumerate}
\item $(1-t)^{-r} = 1+rt+ o(t^2)$,
\item $(1-t)^{-(r+s)} = (1-t)^{-r} \cdot (1-t)^{-s}$,
\item $(1-t)^{-1} = \sum_{i=0}^{\infty} t^i$,
\item If $f(t) = (1-t)^a$, then $(1-t^i)^a = f(t^i)$. 
\end{enumerate}
Note that property $4$ of pre-power structures tells us that it is sufficient to determine a pre-power structure simply by looking at $(1-t)^{-a}$.
\end{defn}

\begin{lemma}
Suppose that there exists a power structure on $R$. Then restricting this power structure to $S_R$ gives rise to a pre-power structure.
\end{lemma}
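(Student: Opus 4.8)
The plan is to check directly that the restriction of the given power structure to $S_R\subseteq 1+t\cdot R[[t]]$ satisfies the four axioms defining a pre-power structure; the restricted map is automatically a map $S_R\times R\to 1+t\cdot R[[t]]$, and since the notion of pre-power structure carries no finiteness hypothesis there is nothing else to verify. Three of the four axioms will be immediate specializations of the power structure axioms, and only the first-order normalization will need an argument.

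First I would dispatch the three easy axioms. Pre-power axiom (2), $(1-t)^{-(r+s)}=(1-t)^{-r}\cdot(1-t)^{-s}$, is the case $f(t)=1-t$ with exponents $-r,-s$ of power structure axiom (4) (read as $f(t)^{r+s}=f(t)^r\cdot f(t)^s$). Pre-power axiom (4), that $(1-t^i)^a=f(t^i)$ whenever $f(t)=(1-t)^a$, is the case $f(t)=1-t$, $m=a$ of power structure axiom (7). For pre-power axiom (3), $(1-t)^{-1}=\sum_{i\ge 0}t^i$: power structure axioms (1), (2), (4) give $(1-t)^{-1}\cdot(1-t)=(1-t)^{-1}\cdot(1-t)^1=(1-t)^{-1+1}=(1-t)^0=1$, so $(1-t)^{-1}$ is the multiplicative inverse of $1-t$ in $1+t\cdot R[[t]]$, namely the geometric series; the same argument shows $(1-t)^{-m}$ is always the ring inverse of $(1-t)^m$.

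The point needing care is pre-power axiom (1), $(1-t)^{-r}\equiv 1+rt\pmod{t^2}$, since power structure axiom (6) controls $(1+t)^m$ but not $(1-t)^{-r}$. The idea is to factor $1-t^2=(1-t)(1+t)$: by axiom (3), $(1-t^2)^{-r}=(1-t)^{-r}\cdot(1+t)^{-r}$, and multiplying by $(1+t)^r$ (using axioms (1) and (4), so $(1+t)^{-r}(1+t)^r=(1+t)^0=1$) gives
\[
(1-t)^{-r}=(1+t)^{r}\cdot(1-t^2)^{-r}.
\]
Here axiom (6) gives $(1+t)^{r}\equiv 1+rt\pmod{t^2}$, while axiom (7) applied with $f(t)=1-t$, $m=-r$ gives $(1-t^2)^{-r}=g(t^2)$ where $g(t):=(1-t)^{-r}\in 1+t\cdot R[[t]]$; hence $(1-t^2)^{-r}\in 1+t^2\cdot R[[t]]$, i.e.\ $(1-t^2)^{-r}\equiv 1\pmod{t^2}$. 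Substituting yields $(1-t)^{-r}\equiv 1+rt\pmod{t^2}$.

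The main (indeed only real) obstacle is this last step: one must realize that the $(1+t)^m$-normalization built into power structures transfers to $(1-t)^{-r}$ only after peeling off the factor $1+t$ and using the substitution axiom (7) to see that the leftover $(1-t^2)^{-r}$ contributes nothing in degree one. The other three axioms are literal restrictions of power structure axioms.
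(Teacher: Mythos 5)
Your proof is correct and follows essentially the same route as the paper's: the three easy axioms are dispatched as direct specializations, and pre-power axiom (1) is established via the factorization $1-t^2=(1-t)(1+t)$, combining the multiplicativity axiom with axioms (6) and (7) to control the degree-one coefficient. The only cosmetic difference is that you isolate $(1-t)^{-r}=(1+t)^r(1-t^2)^{-r}$ explicitly before reading off coefficients, whereas the paper keeps $(1+t)^{-r}$ on the same side and solves the resulting congruence; these are the same argument.
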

\begin{proof}
Suppose we have a power structure on $R$. Properties $2$ and $4$ of pre-power structures are trivially satisfied by properties of power structures. Note that
$$
(1-t)^{-1} (1-t) = (1-t)^{-1} (1-t)^1 = (1-t)^0 = 1,
$$
which tells us that $(1-t)^{-1} = (1+t+t^2 + \ldots)$, as required. 

It only remains to show that $(1-t)^{-a} = 1+at + o(t^2)$. By property $3$ of power structures, $(1-t)^{-a}(1+t)^{-a} = (1-t^2)^{-a}$. By property $7$, the coefficient of $t$ in $(1-t^2)^{-a}$ is $0$, and so we obtain $(1-t^2)^{-a} = 1 + o(t^2)$. By property $6$, $(1+t)^{-a}  = 1 - at + o(t^2)$. Therefore since $(1-t)^{-a} ( 1 -  at + o(t^2)) = 1 + o(t^2)$, we see that $(1-t)^{-a} = 1 + at + o(t^2)$, as required.
\end{proof}
\begin{propn}[Proposition 1 of \cite{GZLMH}]\label{propn1}
Let $R$ be a ring with a pre-power structure. Then the pre-power structure on $R$ extends uniquely to a power structure on $R$.
\end{propn}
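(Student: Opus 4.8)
The plan is to identify power structures with pre-power structures through a \emph{canonical product decomposition} of power series. The first step is to show that every $f(t) \in 1+tR[[t]]$ can be written uniquely as a convergent product $f(t) = \prod_{i \ge 1}(1-t^i)^{-b_i}$ with $b_i \in R$, where $(1-t^i)^{-b_i}$ denotes the pre-power structure value $(1-t)^{-b_i}$ with $t$ replaced by $t^i$ (legitimate by property $4$ of pre-power structures, together with the note that a pre-power structure is determined by the series $(1-t)^{-a}$). The exponents are produced by a degree-by-degree recursion: property $1$ of pre-power structures gives $(1-t^i)^{-b_i} \equiv 1 + b_i t^i \pmod{t^{i+1}}$, so any tail $\prod_{i>n}(1-t^i)^{-b_i}$ is $\equiv 1 \pmod{t^{n+1}}$; hence the coefficient of $t$ in $f$ forces $b_1$, and once $b_1,\dots,b_{n-1}$ are fixed the coefficient of $t^n$ in $f \cdot \prod_{i<n}(1-t^i)^{b_i}$ forces $b_n$. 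This makes $f \mapsto (b_i)_i$ a bijection $1+tR[[t]] \to R^{\mathbb N}$, and property $2$ of pre-power structures (applied after $t \mapsto t^i$) shows it is an isomorphism from $(1+tR[[t]],\cdot)$ to $(R^{\mathbb N},+)$.

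With this in hand I would \emph{define} the candidate power structure by $f(t)^r := \prod_{i\ge 1}(1-t^i)^{-b_i r}$, where $(b_i)_i$ is the canonical decomposition of $f$. The observation that trivializes the rest of the argument is that, for any $c \in R$, the canonical decomposition of $(1-t^i)^{-c}$ is $c$ in slot $i$ and $0$ elsewhere — because $(1-t^i)^{-c}$ is literally a product of that shape and the decomposition is unique. Consequently the canonical decomposition of \emph{any} product $\prod_i(1-t^i)^{-c_i}$ is just $(c_i)_i$; in particular the canonical decomposition of $f(t)^r$ is $(b_i r)_i$.

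Granting this, the seven axioms of a power structure become formal. Property $1$ uses $(1-t)^{-0}=1$, which is forced since property $2$ of pre-power structures makes $(1-t)^{-0}$ an idempotent unit of $R[[t]]$. Properties $2$, $3$, $4$ (that $f^1=f$, $(fg)^r = f^rg^r$, and $f^{r+s}=f^rf^s$) follow from the group isomorphism above together with distributivity $(b_i+c_i)r = b_ir+c_ir$ and $b_i(r+s)=b_ir+b_is$. Property $5$, $(f^r)^s = f^{rs}$ — usually the delicate axiom — is now immediate: the canonical decomposition of $f^r$ is $(b_ir)_i$, so $(f^r)^s = \prod_i(1-t^i)^{-(b_ir)s} = \prod_i(1-t^i)^{-b_i(rs)} = f^{rs}$. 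For property $6$ one computes the canonical decomposition of $1+t$ to be $(1,-1,0,0,\dots)$, whence $(1+t)^m = (1-t)^{-m}(1-t^2)^{m} \equiv 1+mt \pmod{t^2}$ by property $1$ of pre-power structures (the second factor being $\equiv 1 \pmod{t^2}$). Property $7$ holds because $f(t^j) = \prod_i(1-t^{ij})^{-b_i}$, whose $m$-th power is $\prod_i(1-t^{ij})^{-b_im} = g(t^j)$ for $g = f^m$. Finally the structure is finitely determined with $M=N$: only $b_1,\dots,b_{N-1}$ affect $f^r \bmod t^N$, and these are read off from $f \bmod t^N$.

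For uniqueness, let $\phi$ be any (finitely determined) power structure whose restriction to $S_R$ is the given pre-power structure. Applying property $3$ of power structures repeatedly and passing to the $t$-adic limit (valid by finite determinedness) gives $f^r_\phi = \prod_i\bigl((1-t^i)^{-b_i}\bigr)^r_\phi$, and $\bigl((1-t^i)^{-b_i}\bigr)^r_\phi = (1-t^i)^{-b_ir}$ by property $5$ of power structures (to get $\bigl((1-t)^{-b_i}\bigr)^r = (1-t)^{-b_ir}$), property $7$ (to substitute $t\mapsto t^i$), and the hypothesis that $\phi$ agrees with the pre-power structure on $S_R$; hence $\phi$ coincides with the structure constructed above. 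The only real content is in the first two steps — the unique canonical decomposition and the triviality of the decomposition of $(1-t^i)^{-c}$; I anticipate no genuine obstacle beyond keeping every infinite product and limiting argument $t$-adically convergent, which is exactly what finite determinedness guarantees.
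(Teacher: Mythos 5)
Your argument is correct and is essentially the proof of Proposition 1 in \cite{GZLMH}: the unique factorization $f(t)=\prod_{i\ge 1}(1-t^i)^{-b_i}$ obtained by the degree-by-degree recursion, the definition $f(t)^r:=\prod_{i\ge 1}(1-t^i)^{-b_i r}$, and the ensuing axiom check are exactly the argument given there. The paper itself does not reproduce the proof, only citing \cite{GZLMH}, so there is nothing further to compare.
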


\begin{cor}\label{unique}
A power structure on $R$ is uniquely determined by the elements $(1-t)^{-r}$ for all $r \in R$.
\end{cor}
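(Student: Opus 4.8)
The plan is to deduce this directly from Proposition \ref{propn1} together with the lemma that the restriction of a power structure to $S_R$ is a pre-power structure. Suppose we are given two power structures on $R$, written $(f(t),r)\mapsto f(t)^r$ and $(f(t),r)\mapsto f(t)^{\{r\}}$, which agree on all the elements in question, i.e. $(1-t)^{-r}=(1-t)^{\{-r\}}$ for every $r\in R$. The goal is to show that the two maps $(1+t\cdot R[[t]])\times R\to 1+t\cdot R[[t]]$ coincide.

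First I would restrict both power structures to $S_R$. By the lemma preceding Proposition \ref{propn1}, each restriction is a pre-power structure on $R$. I then claim these two pre-power structures are equal. Indeed, a pre-power structure is a map $S_R\times R\to 1+t\cdot R[[t]]$, and by property $4$ of pre-power structures, for any positive integer $i$ and any $a\in R$ the series $(1-t^i)^a$ is obtained from $(1-t)^a$ by substituting $t^i$ for $t$; hence a pre-power structure is completely determined by the family $\{(1-t)^a : a\in R\}$ (this is exactly the remark recorded after Definition \ref{pstrucdef}). Since $a\mapsto -a$ is a bijection of $R$, this family is the same data as $\{(1-t)^{-r}:r\in R\}$, on which our two power structures agree by hypothesis. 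Therefore the two restricted pre-power structures are identical.

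Finally I would invoke Proposition \ref{propn1}: a pre-power structure on $R$ extends \emph{uniquely} to a power structure. Both of the original power structures are extensions of one and the same pre-power structure, namely their common restriction to $S_R$, so by the uniqueness clause of Proposition \ref{propn1} they must coincide. This proves that a power structure on $R$ is determined by the elements $(1-t)^{-r}$.

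I do not expect a genuine obstacle here; the mathematical content sits entirely in the preceding lemma and in Proposition \ref{propn1}, and what remains is bookkeeping. The only point to state carefully is the logical direction of the restriction–extension correspondence: we use the lemma to see that every power structure restricts to a pre-power structure, and the uniqueness in Proposition \ref{propn1} to see that a power structure is recovered from that restriction, so that no information beyond the pre-power structure (equivalently, beyond the series $(1-t)^{-r}$) is needed. A secondary, harmless point is the reindexing between $(1-t)^{-r}$ and $(1-t)^{a}$, which is immediate.
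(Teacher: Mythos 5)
Your argument is correct and is precisely the paper's proof, just spelled out in more detail: the paper's one-line justification ("These elements uniquely determine a pre-power structure on $R$, so this follows from the proposition above") is exactly your combination of the preceding lemma, property 4 of pre-power structures, and the uniqueness in Proposition \ref{propn1}. Nothing to add.
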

\begin{proof}
These elements uniquely determine a pre-power structure on $R$, so this follows from the proposition above. 
\end{proof}
\begin{cor}\label{powerfns}
A power structure on $R$ gives rise to natural functions $a_i: R \to R$ for all non-negative integers $i$ such that for all $a_0(r)=1$ for all $r$ and $a_1 = \mathrm{Id}$. Moreover, for $i\geq1$ we have $a_i(0)=0$ and $a_i(1) = 1$. Finally for any $r,s \in R$, we have  $a_n(r+s) =  \sum_{i=0}^{n-1} a_i(r)a_i(s)$. Conversely, functions satisfying these axioms uniquely determine a power structure.
\end{cor}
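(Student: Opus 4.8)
The plan is to route everything through the one-parameter family of power series $(1-t)^{-r}\in 1+tR[[t]]$: by Corollary \ref{unique} these already determine the power structure, and by Proposition \ref{propn1} a pre-power structure — which, as the remark after Definition \ref{pstrucdef} notes, is nothing more than the assignment $r\mapsto (1-t)^{-r}$ subject to the four listed axioms — determines one. The statement is then proved by reading off coefficients.

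For the forward implication I would define $a_i\colon R\to R$ by $(1-t)^{-r}=\sum_{i\ge 0}a_i(r)\,t^i$, i.e.\ $a_i(r)$ is the coefficient of $t^i$ in $(1-t)^{-r}$. The asserted properties are then immediate: $a_0(r)=1$ because $(1-t)^{-r}\in 1+tR[[t]]$; $a_1=\mathrm{Id}$, and more generally $a_i(0)=0$ for $i\ge 1$ and $a_i(1)=1$ for all $i$, follow from the preceding lemma (whose proof records $(1-t)^{-r}=1+rt+o(t^2)$ and $(1-t)^{-1}=\sum_i t^i$) together with axiom (1) of a power structure (which gives $(1-t)^{-0}=(1-t)^0=1$); and the additivity relation is exactly the coefficientwise form of $(1-t)^{-(r+s)}=(1-t)^{-r}\cdot(1-t)^{-s}$, i.e.\ axiom (4) of a power structure applied to $f(t)=1-t$.

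For the converse, suppose we are given functions $a_i$ satisfying the axioms. I would set $(1-t)^{-r}:=\sum_{i\ge 0}a_i(r)\,t^i$ and check that this defines a pre-power structure on $R$: axiom (1) of a pre-power structure becomes the pair $a_0(r)=1$, $a_1(r)=r$; axiom (2) becomes precisely the additivity relation; axiom (3) becomes $a_i(1)=1$ for all $i$; and axiom (4), which forces $(1-t^j)^{r}$ to be the substitution $t\mapsto t^j$ into $(1-t)^{r}=\sum_i a_i(-r)t^i$, holds by construction. Proposition \ref{propn1} then extends this to a power structure on $R$, and its associated functions, recomputed by the forward recipe, are the original $a_i$ since $a_i(r)$ is by definition the $t^i$-coefficient of $(1-t)^{-r}$. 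Uniqueness is then immediate from Corollary \ref{unique}: any power structure inducing the given $a_i$ has the same series $(1-t)^{-r}$.

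There is no deep obstacle here; the argument is a formal unwinding of the definitions. The one spot deserving a little care is the converse, where one must be sure the data $\{a_i\}$ pins down the value of the pre-power structure on \emph{every} element $(1-t^j)\in S_R$, not merely on $1-t$, and that this is consistent with axiom (4) of pre-power structures. Taking $(1-t^j)^r$ to be the substitution $t\mapsto t^j$ into the already-known series $(1-t)^r$ makes this automatic, so all the remaining verifications are forced.
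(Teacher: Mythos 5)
Your proof is correct and takes essentially the same route as the paper: defining $a_i(r)$ as the $t^i$-coefficient of $(1-t)^{-r}$, reading the listed properties off from the power-structure axioms and the preparatory lemma, and for the converse building a pre-power structure from the $a_i$ and extending it via Proposition~\ref{propn1}. Your version is in fact a little more careful than the paper's, e.g.\ in noting that the data on $(1-t)$ alone suffices to define the pre-power structure on all of $S_R$ in a manner consistent with its axiom~(4).
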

\begin{proof}
Define $a_0$ to be identically $1$, and define $a_i(r)$ to be such that
$$
(1-t)^{-a} =  1 + \sum_{i=1}^{\infty} a_i(r)t^i.
$$
Clearly the functions $a_i$ uniquely determine the power structure, by the previous corollary, so we only need to show they have the properties specified in the statement of the corollary. By definition, $a_1=\mathrm{Id}$. We also see $a_i(0)=0$ and $a_i(1)=1$ for all $i$. Finally, property 4 of power structures tells us that the $\alpha_i$ must satisfy the required equality. For the reverse implication, these $a_i$ uniquely determine a pre-power structure on $R$, and so determine a power structure on $R$ as well.
\end{proof}
\begin{defn}
Let $R, R'$ be rings, and let $\varphi: R \to R'$ be a homomorphism.  Abusing notation slightly, let $\varphi$ denote the map $\varphi: R[[t]] \to R'[[t]]$, given by $\varphi(\sum_{i=0}^{\infty} \alpha_i t^i) =\sum_{i=0}^{\infty} \varphi(\alpha_i) t^i.$ Suppose that $R, R'$ both have power structures on them. We say $\varphi$ \emph{respects the power structures on $R$ and $R'$} if $\varphi(f(t)^a) = \varphi(f(t))^{\varphi(a)}$ for all $f(t) \in 1+t\cdot R[[t]]$ and all $a \in R$.
\end{defn}
\begin{propn}[Proposition 2 of \cite{GZLMH}]
Let $R, R'$ be rings with power structures. Let $\varphi: R \to R'$ be a ring homomorphism such that $(1-t)^{-\varphi(a)} = \varphi( (1-t)^{-a})$.  Then $\varphi$ respects the power structures on $R$ and $R'$.
\end{propn}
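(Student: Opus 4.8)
The plan is to reduce the statement to the hypothesis by writing an arbitrary series in $1+t\cdot R[[t]]$ as a convergent infinite product of the ``building blocks'' $(1-t^i)^{-b_i}$, on which the power operation is pinned down by Corollary \ref{powerfns}, and then tracking how $\varphi$ acts on such a product. By Corollary \ref{unique} each of the two power structures is determined by the series $(1-t)^{-r}$, so the content is to promote the hypothesised equality of these generating series to the full compatibility $\varphi(f(t)^a)=\varphi(f(t))^{\varphi(a)}$ for all $f(t)\in 1+t\cdot R[[t]]$ and all $a\in R$.

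\emph{Step 1: product expansion.} For $f(t)=1+\sum_{j\geq 1}c_j t^j\in 1+t\cdot R[[t]]$ I would construct recursively a unique sequence $(b_i)_{i\geq 1}$ in $R$ with $f(t)=\prod_{i=1}^{\infty}(1-t^i)^{-b_i}$. Since $(1-t^i)^{-b}=1+bt^i+o(t^{i+1})$ by pre-power axioms 1 and 4, one chooses $b_1$ to cancel the $t$-coefficient of $f(t)$, then $b_2$ to cancel the $t^2$-coefficient of $f(t)\cdot(1-t)^{b_1}$, and so on; the product converges $t$-adically because its $i$-th factor is $\equiv 1\pmod{t^i}$. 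Finite determination of the power structure ensures that this product, and the operations performed on it below, are controlled modulo any fixed power of $t$ by only finitely many factors, so coefficients genuinely lie in $R$ at every stage.

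\emph{Steps 2 and 3: raise to the power $a$, then apply $\varphi$.} Writing $u_i(t):=(1-t)^{-b_i}$, one has $u_i(t)^a=(1-t)^{-ab_i}$ by property 5 and $u_i(t^i)=(1-t^i)^{-b_i}$ by pre-power axiom 4, while property 7 says that substituting $t^i$ for $t$ commutes with raising to the $a$-th power; hence $\bigl((1-t^i)^{-b_i}\bigr)^a=(1-t^i)^{-ab_i}$. Combining this with property 3 of power structures (multiplicativity in the base, extended to the convergent product of Step 1 via finite determination) gives
$$ f(t)^a=\prod_{i=1}^{\infty}\bigl((1-t^i)^{-b_i}\bigr)^a=\prod_{i=1}^{\infty}(1-t^i)^{-ab_i}. $$
Now $\varphi\colon R[[t]]\to R'[[t]]$ is a continuous ring homomorphism acting coefficientwise, so it commutes with convergent infinite products and with the substitution $t\mapsto t^i$. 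Applying it to the identity above, and using the hypothesis $\varphi((1-t)^{-c})=(1-t)^{-\varphi(c)}$ with $c=ab_i$ together with $\varphi(ab_i)=\varphi(a)\varphi(b_i)$, yields
$$ \varphi(f(t)^a)=\prod_{i=1}^{\infty}(1-t^i)^{-\varphi(a)\varphi(b_i)}. $$
On the other hand $\varphi(f(t))=\prod_{i\geq 1}(1-t^i)^{-\varphi(b_i)}$ by the same reasoning, so the identity for $f(t)^a$, read inside $R'$ with $\varphi(b_i)$ and $\varphi(a)$ in place of $b_i$ and $a$, gives $\varphi(f(t))^{\varphi(a)}=\prod_{i\geq 1}(1-t^i)^{-\varphi(a)\varphi(b_i)}$. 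The two products agree, proving the proposition.

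The main obstacle is making the infinite-product bookkeeping rigorous: one must check that the expansion in Step 1 has coefficients genuinely in $R$, that the power-structure axioms (in particular properties 3 and 7) survive passage to convergent infinite products — this is precisely where finite determination is used — and that $\varphi$, which a priori is only a ring map with no assumed relation to the power structures, nonetheless commutes with all these operations simply because it acts coefficientwise. Once Step 1 is in place, the remaining steps are routine chases through the axioms.
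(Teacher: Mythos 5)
The paper does not give its own proof of this statement; it is simply quoted as Proposition 2 of \cite{GZLMH}, so there is no in-paper argument to compare against. Your proof is correct and reproduces the standard argument underlying that reference: decompose $f(t)$ as $\prod_{i\geq 1}(1-t^i)^{-b_i}$, use properties 3, 5 and 7 (together with finite determination to handle the infinite product) to get $f(t)^a=\prod_i(1-t^i)^{-ab_i}$, then push everything through the coefficientwise ring map $\varphi$ and invoke the hypothesis $\varphi((1-t)^{-c})=(1-t)^{-\varphi(c)}$ on each factor.
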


\begin{cor}\label{surj}
Let $R, R'$ be rings and suppose there is a power structure on $R$. Let $\varphi: R \to R'$ be a surjective ring homomorphism. Then if there exists a power structure on $R'$ such that $\varphi$ respects the power structures, this power structure is unique.
\end{cor}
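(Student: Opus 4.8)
The plan is to reduce everything to Corollary~\ref{unique}, which asserts that a power structure on $R'$ is completely determined by the family of power series $(1-t)^{-r'} \in 1 + t\cdot R'[[t]]$ indexed by $r' \in R'$. So it suffices to show that, once we fix the given power structure on $R$, the value $(1-t)^{-r'}$ is forced for every $r'$ in any power structure on $R'$ for which $\varphi$ respects the power structures.

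First I would note that the induced map $\varphi\colon R[[t]]\to R'[[t]]$ fixes $1-t$, since it sends the coefficients $1$ and $-1$ to $1$ and $-1$. Hence, applying the hypothesis that $\varphi$ respects the power structures to the series $f(t)=1-t$ and the exponent $-a$ (for arbitrary $a\in R$) yields
\[
\varphi\big((1-t)^{-a}\big) \;=\; \big(\varphi(1-t)\big)^{-\varphi(a)} \;=\; (1-t)^{-\varphi(a)},
\]
where the left-hand exponentiation is computed in the fixed power structure on $R$ and the right-hand one in the candidate power structure on $R'$.

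Next, using surjectivity of $\varphi$, write any $r'\in R'$ as $r'=\varphi(a)$ for some $a\in R$; the identity above then reads $(1-t)^{-r'}=\varphi\big((1-t)^{-a}\big)$, and the right-hand side depends only on the fixed power structure on $R$ together with the choice of preimage $a$, not on the power structure on $R'$. Consequently, if two power structures on $R'$ both make $\varphi$ respect the power structures, then for each $r'$ (choosing a common preimage $a$) they assign the same value to $(1-t)^{-r'}$, and Corollary~\ref{unique} forces them to coincide.

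The argument is essentially immediate once the reduction to Corollary~\ref{unique} is made; the only subtlety worth flagging is that we are \emph{not} claiming $\varphi\big((1-t)^{-a}\big)$ is independent of the choice of preimage $a$ — that independence would be needed to establish \emph{existence} of a compatible power structure on $R'$, which is not asserted here — but only that any compatible power structure must take this prescribed value for each $r'$, which is exactly what uniqueness demands. The main (very mild) obstacle is simply bookkeeping: keeping track of in which of the two rings each exponentiation is being performed.
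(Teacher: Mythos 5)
Your proof is correct and essentially matches the paper's, differing only in a small economy: you route through Corollary~\ref{unique} to reduce to checking $(1-t)^{-r'}$, whereas the paper's proof lifts an arbitrary $g(t)\in 1+t\cdot R'[[t]]$ and $\beta\in R'$ directly and observes $g(t)^\beta = \varphi(f(t)^\alpha)$ is forced. Both rest on the same surjectivity-plus-compatibility argument, and your remark about not needing well-definedness of $\varphi\big((1-t)^{-a}\big)$ across choices of preimage is a useful clarification that the paper leaves implicit.
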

\begin{proof}
Suppose there exists a power structure on $R'$ such that $\varphi$ respects the power structures. Let $g(t) \in R'[[t]]$, and let $f(t) \in R[[t]]$ such that $\varphi(f(t)) = g(t)$, which exists by surjectivity of $\varphi$. Similarly, let $\beta \in R'$, and $\alpha \in R$ with $\varphi(\alpha) = \beta$. Since $\varphi$ respects the power structures, we must have $g(t)^\beta = \varphi(f(t))^{\varphi(\alpha)} = \varphi( f(t)^\alpha)$, since $\varphi$ respects the power structures. Therefore, the power structure on $R'$ is uniquely determined by the power structure on $R$.
\end{proof}

Let $R, R'$ be rings with power structures on them. Let $a_i: R \to R$ be the collection of functions defining the power structure as in Corollary $\ref{powerfns}$, and let $b_i: R' \to R'$ be the corresponding functions for $R'$.  Let $\varphi: R \to R'$ be a map of rings which, a priori, has no relationship to the power structures on $R, R'$. Note that $\varphi$ is compatible with the power structures on $R, R'$ if and only if $\varphi( a_n(r)) = b_n(\varphi(r))$ for all $r \in R$ and for all $n \geq 0$. 

\begin{lemma}\label{subgroup}
Suppose that $r, s \in R$ are elements such that for all $n$, $\varphi(a_n(r)) = b_n(\varphi(r))$ and similarly for $s$. Then the same is true for $-r$ and $r+s$. In particular, the set of elements of $R$ such that $\varphi$ is compatible with the power structure operations on them is a subgroup of $R$.
\end{lemma}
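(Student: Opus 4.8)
The plan is to repackage the pointwise conditions ``$\varphi(a_n(r)) = b_n(\varphi(r))$ for all $n$'' as a single identity in the power series ring and then exploit that the coefficientwise map $\varphi \colon R[[t]] \to R'[[t]]$ is a ring homomorphism. For $x \in R$ write $P_x(t) := (1-t)^{-x} = 1 + \sum_{i \geq 1} a_i(x)\, t^i \in 1 + t R[[t]]$, and for $y \in R'$ write $Q_y(t) := (1-t)^{-y} = 1 + \sum_{i \geq 1} b_i(y)\, t^i$. By the definition of the extension of $\varphi$ to power series, the hypothesis on $r$ says precisely that $\varphi(P_r(t)) = Q_{\varphi(r)}(t)$, and likewise for $s$; the goal becomes to deduce the analogous identities $\varphi(P_{r+s}(t)) = Q_{\varphi(r+s)}(t)$ and $\varphi(P_{-r}(t)) = Q_{\varphi(-r)}(t)$.

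For $r+s$, I would first observe that property $(2)$ of pre-power structures, which holds by Proposition~\ref{propn1}, gives $P_{r+s}(t) = P_r(t)\cdot P_s(t)$ in $R[[t]]$, and likewise $Q_{\varphi(r)+\varphi(s)}(t) = Q_{\varphi(r)}(t)\cdot Q_{\varphi(s)}(t)$ in $R'[[t]]$. Applying the ring homomorphism $\varphi$ to the first identity, and using the hypotheses on $r$ and $s$ together with $\varphi(r+s)=\varphi(r)+\varphi(s)$, I obtain
$$
\varphi(P_{r+s}(t)) = \varphi(P_r(t))\cdot\varphi(P_s(t)) = Q_{\varphi(r)}(t)\cdot Q_{\varphi(s)}(t) = Q_{\varphi(r+s)}(t),
$$
which is the required conclusion for $r+s$.

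For $-r$, taking $s=-r$ in the same property gives $P_r(t)\cdot P_{-r}(t) = P_0(t) = 1$, so $P_{-r}(t)$ is the inverse of $P_r(t)$ in the unit group $1+tR[[t]]$, and similarly $Q_{-\varphi(r)}(t) = Q_{\varphi(r)}(t)^{-1}$. Applying $\varphi$ and using $\varphi(P_r(t)) = Q_{\varphi(r)}(t)$ and $\varphi(-r)=-\varphi(r)$ then gives $\varphi(P_{-r}(t)) = \varphi(P_r(t))^{-1} = Q_{\varphi(r)}(t)^{-1} = Q_{\varphi(-r)}(t)$. Finally, since $a_0=b_0=1$, $a_n(0)=b_n(0)=0$ for $n\geq 1$, and $\varphi(0)=0$, the element $0$ lies in $S:=\{x\in R : \varphi(a_n(x))=b_n(\varphi(x))\text{ for all }n\geq 0\}$; combined with the closure under addition and negation just shown, this proves $S$ is an abelian subgroup of $(R,+)$.

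I do not anticipate a genuine obstacle. The only point that needs a moment's care is recognising that the multiplicativity identities $P_{r+s}=P_rP_s$ and $P_{-r}=P_r^{-1}$ follow from the (pre-)power-structure axioms alone and involve no compatibility between $\varphi$ and the power structures; once that is noted, everything reduces to a formal computation with the coefficientwise homomorphism on power series.
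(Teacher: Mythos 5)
Your proof is correct and rests on the same underlying facts as the paper's, just packaged at the level of generating series rather than coefficientwise. The paper handles $r+s$ directly from the recursion $a_n(r+s)=\sum_{i} a_i(r)a_{n-i}(s)$ and handles $-r$ by an explicit induction on $n$ starting from $a_n(-r)=-\sum_{i=0}^{n-1}a_i(r)a_{n-i}(-r)$; your reformulation $\varphi(P_r)=Q_{\varphi(r)}$ lets you dispatch both cases at once using multiplicativity and invertibility in the group $1+tR[[t]]$, which is a tidy way to hide the same recursion.
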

\begin{proof}
Note that $a_n(r+s) = \sum_{i=0}^n a_i(r)a_{n-i}(s)$. Applying $\varphi$ to both sides of this and using the induction hypothesis gives the result for $r+s$.  We now claim that $\varphi(a_n(-r)) = b_n(\varphi(-r))$. The result clearly holds for $n=1$, so suppose it holds for all $n'<n$ for induction. Since we have that $0 = a_n(0) = a_n(r - r ) = \sum_{i=0}^n a_i(r)a_{n-i}(-r)$, we can write:
$$
a_n(-r) = - \sum_{i=0}^{n-1} a_i(r)a_{n-i}(-r),
$$ 
and similarly for $b_n(-r)$.  Applying $\varphi$ to both sides of the above equation gives us:
$$
\varphi(a_n(-r)) = - \sum_{i=0}^{n-1} \varphi(a_i(r)) \varphi(a_{n-i}(-r)) = - \sum_{i=0}^{n-1} b_i(\varphi(r))b_{n-i}(\varphi(-r)) = b_n(\varphi(-r)),
$$
as required. 
\end{proof}

\section{Determining a possible power structure on $\widehat{\mathrm{W}}(k)$}
This section is dedicated to proving Corollary $\ref{maincor1}$. We first proceed by showing that any power structure on $\widehat{\mathrm{W}}(k)$ satisfying the conditions of the theorems would be unique. This then allows us to compute exactly what it must be, by specialising to quadratic étale algebras. Finally, we show that these functions are well defined and give rise to a genuine power structure, before showing some basic properties that this power structure satisfies. 

\subsection{Uniqueness of the power structure}
\begin{defn}
 For $X$ a variety over $k$, let $X^{(m)}$ denote the \emph{$m^{\text{th}}$ symmetric power} of $X$, ie, the quotient of $X^m$ by the $S_m$ action permuting the copies of $X$ in the product.
\end{defn}
\begin{thm}[\cite{GZLMH}]\label{powerstructureonK0vark}
For $\mathrm{char}(k)=0$, there is a power structure on $K_0(\mathrm{Var}_k)$ given on quasiprojective varieties by
$$
(1-t)^{-[X]} = 1 + [X]t + [X^{(2)}]t^2 + \ldots.
$$
\end{thm}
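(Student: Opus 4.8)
The plan is to invoke Proposition \ref{propn1}: it suffices to produce a \emph{pre-power structure} on $R := K_0(\mathrm{Var}_k)$, i.e.\ to define elements $(1-t)^{-r} \in 1+tR[[t]]$ for all $r \in R$ satisfying axioms (1)--(4) of Definition \ref{pstrucdef}, since such data then extends uniquely to a genuine power structure. Every variety over $k$ admits a finite stratification by quasiprojective (indeed affine) locally closed subvarieties, so the classes $[X]$ of quasiprojective $X$ generate the monoid of effective classes, and $R$ is its group completion. The first step is therefore to set, for $X$ quasiprojective,
$$
(1-t)^{-[X]} := 1 + [X]t + [X^{(2)}]t^2 + [X^{(3)}]t^3 + \cdots,
$$
which makes sense because symmetric powers of quasiprojective varieties are again quasiprojective varieties (this is where $\mathrm{char}(k)=0$ is convenient, ensuring the quotients are reduced and stay inside $\mathrm{Var}_k$).

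The crux is compatibility with the defining relations of $K_0(\mathrm{Var}_k)$: if $U \subseteq X$ is open with closed complement $Z = X \setminus U$, one must check $[X^{(n)}] = \sum_{i=0}^{n} [U^{(i)}]\,[Z^{(n-i)}]$ in $R$ for every $n$. This rests on the geometric fact that $X^{(n)}$ has a \emph{locally closed} decomposition $X^{(n)} = \bigsqcup_{i=0}^{n} U^{(i)} \times Z^{(n-i)}$, obtained by recording, for an effective $0$-cycle of degree $n$, how much of its mass sits on $U$ and how much on $Z$; each stratum is quasiprojective. Passing to classes and using multiplicativity of $[-]$ for products yields the displayed identity, which says precisely $(1-t)^{-[X]} = (1-t)^{-[U]}\cdot(1-t)^{-[Z]}$. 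This is the main obstacle and essentially the only place requiring real geometric input; the special case $X = U \sqcup Z$ already gives additivity $(1-t)^{-([U]+[Z])} = (1-t)^{-[U]}(1-t)^{-[Z]}$ on effective classes, since $(U\sqcup Z)^{(n)} \cong \bigsqcup_{i} U^{(i)}\times Z^{(n-i)}$ canonically, by analysing the $S_n$-orbits of subsets of $\{1,\dots,n\}$.

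Granting this, I would extend the definition to all of $R$ via the group law: for $r = a-b$ with $a,b$ effective, put $(1-t)^{-r} := (1-t)^{-a}\cdot\big((1-t)^{-b}\big)^{-1}$, the inverse taken in $1+tR[[t]]$. The stratification identity shows this is independent of the presentation of $r$ and additive in $r$, which is axiom (2). The other axioms are cheap: axiom (1), $(1-t)^{-r} = 1 + rt + o(t^2)$, holds since $X^{(1)}=X$; axiom (3), $(1-t)^{-1} = \sum_{i\ge 0}t^i$, holds since $(\Spec k)^{(i)} = \Spec k$; and axiom (4) is then automatic, as noted after Definition \ref{pstrucdef}, because a pre-power structure is determined by the elements $(1-t)^{-a}$. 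Applying Proposition \ref{propn1} promotes this to a unique power structure on $K_0(\mathrm{Var}_k)$, whose value on $(1-t)^{-[X]}$ for quasiprojective $X$ is by construction the asserted symmetric-power series. I expect the delicate points to be purely geometric: keeping symmetric powers and their strata inside $\mathrm{Var}_k$, and verifying the decomposition of $X^{(n)}$ is by locally closed subvarieties rather than merely constructible subsets.
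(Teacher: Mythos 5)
Your proposal is correct and is essentially the argument of Gusein--Zade, Luengo and Melle--Hern\'andez, which the paper simply cites without reproducing a proof: define the pre-power structure on quasiprojective generators via Kapranov's zeta function $\sum [X^{(n)}]t^n$, verify compatibility with scissor relations via the locally closed stratification $X^{(n)} = \bigsqcup_i U^{(i)}\times Z^{(n-i)}$, extend multiplicatively to virtual classes, and invoke Proposition \ref{propn1}. The one place where the sketch is mildly imprecise is the role of $\mathrm{char}(k)=0$ (the subtlety is less about reducedness of the quotient and more about the behaviour of symmetric powers under products and the multiplicativity axiom, which can misbehave in positive characteristic), but since the theorem is stated in characteristic $0$ this does not create a gap.
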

\begin{defn}
Note, for $X/k$ any quasiprojective variety, we have that the coefficient of $t^n$ of $(1-t)^{-[X]}$ is precisely $X^{(n)}$. Therefore, for $Y$ any element of $K_0(\mathrm{Var}_k)$, write $Y^{(n)}$ to mean the coefficient of $t^n$ in the power series $(1-t)^{-[Y]}$. Note that if $X$ is a variety of dimension $0$, then so is $X^{(n)}$, so the power structure above restricts to give a power structure on $K_0(\Et_k)$ as well.
\end{defn}
In the language of Corollary $\ref{powerfns}$, this is saying that there is a power structure $K_0(\mathrm{Var}_k)$ determined by the functions $S_n([X]) = [X^{(n)}]$.

 It is unclear whether a similar statement holds in characteristic $p$. In characteristic $0$, Bittner's theorem (Theorem 3.1 of \cite{Bi}) guarantees that $K_0(\mathrm{Var}_k)$ is generated by quasi-projective varieties, which in turn implies that defining the power structure on quasi-projective varieties is sufficient. However, when restricting to varieties of dimension $0$, we have the following.
\begin{lemma}
Let $k$ be a field of characteristic $\neq 2$. Then there is a power structure on $K_0(\Et_k)$ given on the level of finite étale algebras $A$ by
$$
(1-t)^{-[A]} = 1 + [A]t + [A^{(2)}]t^2 + \ldots .
$$
\end{lemma}
\begin{proof}
This follows by identical reasoning to characteristic $0$, noting that all dimension $0$ varieties are quasi-projective, so $K_0(\Et_k)$ is generated by quasi-projective varieties.
\end{proof}
\begin{rem}\label{positivecharproblems}
Since this paper is concerned with studying a power structure on $\widehat{\mathrm{W}}(k)$ coming from $\chi^{mot}$ and symmetric powers, we will restrict either to studying the following two cases
\begin{enumerate}
\item Studying $\chi^{mot}: K_0(\mathrm{Var}_k) \to \widehat{\mathrm{W}}(k)$ when $\mathrm{char}(k)=0$.
\item Studying $\mathrm{Tr}: K_0(\Et_k) \to\widehat{\mathrm{W}}(k)$ when $\mathrm{char}(k)\neq 2$.
\end{enumerate}
It it is possible to define $\chi^{mot}$ for a general variety in characteristic $p$: over a perfect field, this is Definition 2.7 of \cite{Az}, and as pointed out by the referee of this paper this definition extends to imperfect fields by modifying Proposition 2.6 of \cite{Az}, so that instead of using the duality statement of Riou (\cite{LYZR}, Corollary B2), we use Theorem 3.2.1 of \cite{EK}. 

For the purpose of this paper it suffices to define $\chi^{mot}$ for varieties of dimension $0$ in positive characteristic, as these are the varieties where we have this power structure. These are all projective, and so it suffices to use de Rham cohomology as in the introduction.
\end{rem}

\begin{lemma}\label{surjK0}
For $\mathrm{char}(k) \neq 2$, the trace map $\mathrm{Tr}: K_0(\Et_k) \to \widehat{\mathrm{W}}(k)$ is surjective.
\end{lemma}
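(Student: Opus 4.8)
The plan is to reduce to the additive generators $\langle a \rangle$, $a \in k^\times$, of $\widehat{\mathrm{W}}(k)$ and to realise each of them via the trace form of a quadratic étale algebra. First I would note that the symbols $\langle a \rangle$ generate $\widehat{\mathrm{W}}(k)$ not merely as a ring but as an abelian group: by Theorem 4.1 of \cite{La} they generate it as a ring, and the relation $\langle a \rangle \langle b \rangle = \langle ab \rangle$ shows that any product of such generators is again one of them. Since $\mathrm{Tr}$ is a ring homomorphism, its image is a subring of $\widehat{\mathrm{W}}(k)$, in particular an additive subgroup; hence it suffices to show that $\langle a \rangle$ lies in the image of $\mathrm{Tr}$ for every $a \in k^\times$.

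The key input is the computation of the trace form of $A_a := k[x]/(x^2 - a)$ for $a \in k^\times$. Since $\mathrm{char}(k) \neq 2$, the polynomial $x^2 - a$ is separable, so $A_a$ is a (rank $2$) étale $k$-algebra, and in the basis $\{1,x\}$ one has $\mathrm{Tr}_{A_a/k}(1) = 2$, $\mathrm{Tr}_{A_a/k}(x) = 0$ and $\mathrm{Tr}_{A_a/k}(x^2) = \mathrm{Tr}_{A_a/k}(a) = 2a$, so the Gram matrix of $\mathrm{Tr}_{A_a}$ is $\mathrm{diag}(2, 2a)$ and therefore $\mathrm{Tr}([\Spec A_a]) = \langle 2 \rangle + \langle 2a \rangle$. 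One also has $\mathrm{Tr}([\Spec k]) = \langle 1 \rangle$.

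Finally I would combine these facts inside the image subgroup. Taking $a = 1/2$ shows $\langle 2 \rangle + \langle 1 \rangle$ is in the image, and subtracting $\langle 1 \rangle = \mathrm{Tr}([\Spec k])$ gives $\langle 2 \rangle$ in the image; subtracting $\langle 2 \rangle$ from $\langle 2 \rangle + \langle 2a \rangle$ then puts $\langle 2a \rangle$ in the image for every $a \in k^\times$. Since $a \mapsto 2a$ is a bijection of $k^\times$, every $\langle c \rangle$ with $c \in k^\times$ lies in the image, and so by the first paragraph $\mathrm{Tr}$ is surjective. There is no serious obstacle in this argument: the only substantive point is the trace-form computation for quadratic algebras, which is classical, and the hypothesis $\mathrm{char}(k) \neq 2$ enters only to make $x^2 - a$ separable and to invert $2$.
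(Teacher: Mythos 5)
Your proof is correct and takes essentially the same approach as the paper: reduce to showing each $\langle a \rangle$ lies in the image, and realise it via the trace forms $\langle 2 \rangle + \langle 2a \rangle$ of the quadratic \'etale algebras $k[x]/(x^2 - a)$. The only organisational difference is that the paper splits into cases according to whether $2$ is a square in $k^\times$, while your use of $A_{1/2}$ to extract $\langle 2 \rangle$ handles both cases uniformly --- a minor but genuine streamlining.
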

\begin{proof}
It is enough to show that for any $\alpha \in (k^\times)/(k^\times)^2$, there exists some $X_\alpha \in K_0(\Et_k)$ such that $\mathrm{Tr}(X_\alpha) = \langle \alpha \rangle$.  For $\alpha \in (k^\times)^2$, this is easy: let $X_\alpha := [k]$. 

Firstly, suppose $2 \in (k^\times)^2$. Then $\mathrm{Tr}(k(\sqrt{\alpha})) = \langle 2 \rangle + \langle 2\alpha \rangle = \langle 1 \rangle + \langle \alpha \rangle$. This means if we define $X_\alpha := [k(\sqrt{\alpha})] - [k]$ for $\alpha \not\in (k^\times)^2$, this gives the result. 

Now suppose that $2 \not \in (k^\times)^2$. Then $\mathrm{Tr}(k(\sqrt{2})) = \langle 2 \rangle + \langle 1 \rangle$. For $\alpha \in (k^\times)/(k^\times)^2$,  define $X_\alpha := [k(\sqrt{2\alpha})] - [k(\sqrt{2})] + [k]$. Then $\mathrm{Tr}(X_\alpha) = \langle \alpha \rangle$ as required. 
\end{proof}
We deduce the following corollary, which was also mentioned in Theorem 1 of \cite{R2}.
\begin{cor}
The motivic Euler characteristic $\chi^{mot}: K_0(\mathrm{Var}_k) \to \widehat{\mathrm{W}}(k)$ is surjective.
\end{cor}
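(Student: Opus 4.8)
The plan is to deduce this directly from Lemma \ref{surjK0}, with essentially no extra work. Recall that $K_0(\Et_k)$ is by construction a subring of $K_0(\mathrm{Var}_k)$, and that, as recorded in the definition of $\chi^{mot}$, one has $\chi^{mot}(\Spec(A)) = [\mathrm{Tr}_A]$ for every finite étale $k$-algebra $A$; hence the restriction of $\chi^{mot}$ to $K_0(\Et_k)$ is exactly the trace homomorphism $\mathrm{Tr}: K_0(\Et_k) \to \widehat{\mathrm{W}}(k)$.

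First I would observe that the image of $\chi^{mot}$ contains the image of its restriction to the subring $K_0(\Et_k)$, that is, the image of $\mathrm{Tr}$. Since $\mathrm{char}(k) = 0 \neq 2$, Lemma \ref{surjK0} applies and tells us that $\mathrm{Tr}$ is already surjective onto $\widehat{\mathrm{W}}(k)$. Therefore $\chi^{mot}$ is surjective, as claimed.

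There is no genuine obstacle here: all the content sits in Lemma \ref{surjK0}, whose proof produces, for each class $\langle \alpha \rangle$ with $\alpha \in k^\times/(k^\times)^2$, an explicit integral combination of spectra of (at most quadratic) étale extensions mapping to it under $\mathrm{Tr}$, splitting into the cases $2 \in (k^\times)^2$ and $2 \notin (k^\times)^2$. The only point to keep in mind is the compatibility already built into the setup, namely that $\chi^{mot}$ evaluated on a $0$-dimensional variety is computed by the associated trace form, so that the surjectivity of $\mathrm{Tr}$ transfers verbatim to $\chi^{mot}$.
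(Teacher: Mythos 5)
Your proof is correct and is essentially the paper's own argument: both deduce the surjectivity of $\chi^{mot}$ from Lemma \ref{surjK0} via the observation that $\mathrm{Tr} = \chi^{mot}|_{K_0(\Et_k)}$, so the image of $\chi^{mot}$ already contains all of $\widehat{\mathrm{W}}(k)$. Your version just spells out the one-line reasoning a little more explicitly.
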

\begin{proof}
Since $\mathrm{Tr} = \chi^{mot}|_{K_0(\Et_k)}$, this is immediate.
\end{proof}

\begin{cor}\label{uniquepstruc}
Suppose $\mathrm{char}(k) = 0$ (resp. $\neq 2$). If there exists a power structure on $\widehat{\mathrm{W}}(k)$ such that $\chi^{mot}: K_0(\mathrm{Var}_k) \to \widehat{\mathrm{W}}(k)$ (resp. $\mathrm{Tr}: K_0(\Et_k) \to \widehat{\mathrm{W}}(k)$) respects the power structures, then the power structure on $\widehat{\mathrm{W}}(k)$ is unique.
\end{cor}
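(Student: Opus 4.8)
The plan is to reduce immediately to the abstract uniqueness statement of Corollary \ref{surj}. By Theorem \ref{powerstructureonK0vark}, the ring $K_0(\mathrm{Var}_k)$ carries a power structure (the one determined by symmetric powers of varieties), so the hypotheses of Corollary \ref{surj} are met with $R = K_0(\mathrm{Var}_k)$ and $R' = \widehat{\mathrm{W}}(k)$, provided the relevant homomorphism is surjective. That homomorphism is $\chi^{mot}$, and its surjectivity in characteristic $0$ has just been recorded above: it follows from Lemma \ref{surjK0}, since $\mathrm{Tr} = \chi^{mot}|_{K_0(\Et_k)}$ is already surjective onto $\widehat{\mathrm{W}}(k)$ (one exhibits, for each class $\langle \alpha \rangle$, an explicit virtual $0$-dimensional variety mapping to it). Applying Corollary \ref{surj} with $\varphi = \chi^{mot}$ then gives precisely the claim: any power structure on $\widehat{\mathrm{W}}(k)$ making $\chi^{mot}$ respect the power structures must satisfy $g(t)^{\beta} = \chi^{mot}(f(t)^{\alpha})$ whenever $\chi^{mot}(f(t)) = g(t)$ and $\chi^{mot}(\alpha) = \beta$, and since every $g(t) \in 1 + t\cdot\widehat{\mathrm{W}}(k)[[t]]$ and every $\beta \in \widehat{\mathrm{W}}(k)$ arises in this way, the power structure is completely determined.

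I do not anticipate any genuine obstacle here, as the statement is a formal corollary of machinery already in place; the only point needing a word of care is that surjectivity of $\chi^{mot}$ as a ring map automatically upgrades to surjectivity of the induced map $K_0(\mathrm{Var}_k)[[t]] \to \widehat{\mathrm{W}}(k)[[t]]$ on power series, which is clear because the latter is applied coefficientwise. It is worth emphasising that this corollary asserts only uniqueness: the real content — that a compatible power structure on $\widehat{\mathrm{W}}(k)$ exists and the identification of its structure functions $a_i(\langle \alpha \rangle)$ — is deferred to the subsequent computation with quadratic étale algebras and to Corollary \ref{finalpstruc}.
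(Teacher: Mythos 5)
Your proof is correct and follows exactly the paper's own argument: surjectivity of $\chi^{mot}$ (recorded just above, via Lemma \ref{surjK0}) combined with Corollary \ref{surj} applied to $\varphi = \chi^{mot}$. The only difference is that you spell out the details that the paper's one-line proof leaves implicit.
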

\begin{proof}
When $\mathrm{char}(k)=0$, since $\chi^{mot}$ is surjective, this follows by Corollary $\ref{surj}$. The statement in positive characteristic holds since $\mathrm{Tr}$ is surjective. 
\end{proof}
\begin{rem}\label{questionoverC}
If $k =\mathbb{C}$, we have a canonical isomorphism $\widehat{\mathrm{W}}(\mathbb{C}) = \Z$, and Remark 1.3 of \cite{Le} allows us to identify $\chi^{mot}$ with the function sending a variety $X$ to the compactly supported Euler characteristic of $X(\mathbb{C})$. We have a canonical power structure on $\Z$, given by $a_n(m) = {n+m -1 \choose n}$. The main theorem of \cite{Mac} tells us that $\chi^{mot}$ respects the power structures on both sides. This raises the question of whether there exists a power structure on $\widehat{\mathrm{W}}(k)$ such that $\chi^{mot}$ respects the power structures for more general $k$, or whether there is an arithmetic obstruction to this compatibility.
\end{rem}

\subsection{Determining the power structure}

In this subsection we use results about motivic Euler characteristics of symmetric products of quadratic étale algebras to determine what the unique power structure specified by Corollary $\ref{uniquepstruc}$ must be if it exists. This allows us to give an explicit computation of the potential power structure on $\widehat{\mathrm{W}}(k)$. 

\begin{defn}\label{pa}
Let $\alpha \in k^{\times}$. Let $k[\sqrt{\alpha}] := k[x]/(x^2-\alpha)$, and define $P_\alpha := \Spec(k[\sqrt{\alpha}])$. We can easily compute $\chi^{mot}(P_\alpha) = 2\langle 1 \rangle$ for $\alpha \in (k^\times)^2$, and $\langle 2 \rangle + \langle 2\alpha \rangle$ otherwise. 
\end{defn}
\begin{lemma}\label{quadratic} We can compute the symmetric powers of $P_\alpha$ to be
\begin{equation*}
P_\alpha^{(n)}=
\begin{cases}
\amalg_{i=1}^{m+1} P_\alpha & \text{if $n=2m+1$ is odd,}\\
\Spec(k) \amalg \coprod_{i=1}^m P_\alpha, & \text{if $n=2m$ is even.}
\end{cases}
\end{equation*}
\end{lemma}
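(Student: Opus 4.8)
The plan is to reduce the computation to a combinatorial statement about finite $\Gal(\kbar/k)$-sets. Since $\mathrm{char}(k)\neq 2$, the polynomial $x^2-a$ is separable, so $P_a$ is finite étale over $k$; consequently each $P_a^{(n)}=(P_a)^n/S_n$ is again finite étale over $k$ (symmetric powers of finite étale $k$-schemes are finite étale, which is standard), and is therefore determined by its set of $\kbar$-points together with the natural $\Gal(\kbar/k)$-action. Moreover $\big(P_a^{(n)}\big)(\kbar)=\big((P_a)(\kbar)\big)^n/S_n$ with the diagonal Galois action. So first I would recall this translation, and then analyse the $\Gal(\kbar/k)$-set $\big((P_a)(\kbar)\big)^n/S_n$. (Alternatively one can run the same argument algebraically, decomposing the étale algebra $\big(k[\sqrt a]^{\otimes_k n}\big)^{S_n}$; the combinatorics is identical.)

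Write $S:=(P_a)(\kbar)$, a two-element set $\{+,-\}$ on which $\sigma\in\Gal(\kbar/k)$ acts by the sign of $\sigma(\sqrt a)/\sqrt a$; this action is trivial if $a\in(k^\times)^2$ and otherwise factors through the nontrivial character $\Gal(\kbar/k)\twoheadrightarrow\Gal(k(\sqrt a)/k)\cong\Z/2$. In either case $S^n/S_n$ is the set of size-$n$ multisets in $\{+,-\}$, which I identify with $\{0,1,\dots,n\}$ via ``number of $+$'s'', and the diagonal Galois action becomes $j\mapsto n-j$. Counting orbits: if $n=2m+1$ is odd there is no fixed element and the $n+1=2(m+1)$ elements split into $m+1$ orbits of size two; if $n=2m$ is even the element $j=m$ is fixed and the remaining $2m$ elements split into $m$ orbits of size two.

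It then remains to identify the $k$-schemes attached to these orbits. A fixed point corresponds to $\Spec(k)$. For a size-two orbit $\{j,n-j\}$ (which only occurs when $a\notin(k^\times)^2$), the stabiliser in $\Gal(\kbar/k)$ of either element is exactly $\ker\!\big(\Gal(\kbar/k)\to\Z/2\big)=\Gal(\kbar/k(\sqrt a))$, so the orbit corresponds to $\Spec(k(\sqrt a))=P_a$. Assembling the orbits yields the two claimed disjoint-union decompositions. Finally, when $a$ is a square there are no size-two orbits, but then $P_a\cong\Spec(k)\amalg\Spec(k)$, so the right-hand side of the asserted formula collapses to $(n+1)\cdot\Spec(k)$, which is precisely $S^n/S_n$ with trivial Galois action; hence the statement holds verbatim in this case too. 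The only genuinely delicate point is checking that a size-two Galois orbit recovers $P_a$ on the nose — equivalently that the Galois action on such an orbit is the very quadratic character defining $P_a$ — which is immediate from the description of the action as $j\mapsto n-j$ through $\Gal(k(\sqrt a)/k)$; everything else is bookkeeping of parities and the square versus non-square dichotomy.
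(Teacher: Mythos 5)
Your argument is correct and follows essentially the same route as the paper's: identify $P_a^{(n)}(\kbar)$ with the $S_n$-orbits of $\{\pm 1\}^n$ (equivalently, size-$n$ multisets in a two-element set, indexed by $\{0,\dots,n\}$), observe that the residual $\Gal(k(\sqrt a)/k)$-action is $j\mapsto n-j$, count orbits according to the parity of $n$, and read off the corresponding $0$-dimensional $k$-scheme. Your treatment is if anything slightly more explicit — you spell out the stabiliser computation identifying a size-two orbit with $P_a$ and you check the degenerate case $a\in(k^\times)^2$ rather than dismissing it as trivial — but the underlying decomposition and key observation are identical.
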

\begin{proof} Assume that $k(\sqrt a)$ is a degree two field extension of $k$, otherwise the claim is trivial.

We can identify $P_\alpha^n(\kbar)$ with the set of $n$-tuples $(\alpha_1, \ldots, \alpha_n)$, where each $\alpha_i \in \{-1,1\}$. This is because an object of $P_\alpha^n(\overline k)$ correponds to an $n$-tuple of embeddings of $k(\sqrt{\alpha})$ into $\kbar$, and since there are two embeddings, identify one of them with $-1$ and the other with $1$. Under this identification the action of $\Gal_k$ factors through through $\Gal(k(\sqrt{\alpha})/k)$, and the non trivial element of $\Gal(k(\sqrt{\alpha})/k)$ acts by sending $(\alpha_1, \ldots, \alpha_n)$ to $(-\alpha_1, \ldots, -\alpha_n)$.  This action commutes with the natural action of $S_n$ permuting the order of the sequences, so it induces an action of $\Gal_k$ on the $S_n$-orbits. The latter is in a $\Gal_k$-equivariant bijection with $P_\alpha^{(n)}(\overline k)$. The $S_n$-orbits are sequences with exactly $i$ zeros for each $i=0,1,\ldots,n$, and the involution of $\mathrm{Gal}(k(\sqrt{\alpha})/k)$ switches the orbit of $i$ zeros with the the orbit of $n-i$ zeros. Therefore every orbit of this involution consists of two elements except when $n=2m$ is even, since then  the orbit of $m$ zeros is fixed. This means there are isomorphisms of finite $\Gal_k$-sets:
\begin{equation*}
P_\alpha^{(n)}(\kbar) \cong
\begin{cases}
\amalg_{i=1}^{m+1} P_\alpha(\kbar) & \text{if $n=2m+1$ is odd,}\\
\Spec(k)(\kbar) \amalg \coprod_{i=1}^m P_\alpha(\kbar), & \text{if $n=2m$ is even.}
\end{cases}
\end{equation*}
Since $P_\alpha^{(n)}$ is also a $0$-dimensional variety over $k$, this gives the result. 
\end{proof}
\begin{cor}\label{quadcalc}
We can compute 
\begin{align*}
\chi^{mot}(P_\alpha^{(n)}) &= \frac{n+1}{2} \left( \langle 2 \rangle + \langle 2\alpha\rangle \right)\text{ if $n$ is odd,}\\
\chi^{mot}(P_\alpha^{(n)}) &= \langle 1 \rangle +  \frac{n}{2}  \left( \langle 2 \rangle + \langle 2\alpha\rangle \right) \text{if $n$ is even.}
\end{align*}
\end{cor}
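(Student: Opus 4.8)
The plan is to read the answer straight off Lemma \ref{quadratic}, using only that $\chi^{mot}$ is a ring homomorphism — hence sends disjoint unions of varieties to sums — together with the two values recorded in Definition \ref{pa}, namely $\chi^{mot}(\Spec k) = [\mathrm{Tr}_k] = \langle 1 \rangle$ and $\chi^{mot}(P_a) = \langle 2 \rangle + \langle 2a \rangle$ for $a \notin (k^\times)^2$. Since every $P_a^{(n)}$ is $0$-dimensional, $\chi^{mot}(P_a^{(n)})$ may be computed as $\mathrm{Tr}(P_a^{(n)})$, so the asserted identity makes sense and is to be proved under the standing hypothesis $\mathrm{char}(k)\neq 2$ rather than only in characteristic $0$.

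I would then split into the two cases of Lemma \ref{quadratic}. If $n = 2m+1$ is odd, then $P_a^{(n)} \cong \coprod_{i=1}^{m+1} P_a$, so applying $\chi^{mot}$ and using additivity gives $\chi^{mot}(P_a^{(n)}) = (m+1)\big(\langle 2\rangle + \langle 2a\rangle\big) = \tfrac{n+1}{2}\big(\langle 2\rangle + \langle 2a\rangle\big)$. If $n = 2m$ is even, then $P_a^{(n)} \cong \Spec k \amalg \coprod_{i=1}^{m} P_a$, so $\chi^{mot}(P_a^{(n)}) = \langle 1\rangle + m\big(\langle 2\rangle + \langle 2a\rangle\big) = \langle 1\rangle + \tfrac{n}{2}\big(\langle 2\rangle + \langle 2a\rangle\big)$. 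These are exactly the two displayed formulas.

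I do not expect a genuine obstacle: all of the geometric content already sits in Lemma \ref{quadratic}, and what remains is bookkeeping plus the routine identification of $\chi^{mot}$ on $0$-dimensional varieties with the trace-form homomorphism $\mathrm{Tr}$. The only point deserving a word of care is the degenerate case $a \in (k^\times)^2$: there $P_a \cong \Spec k \amalg \Spec k$, so one should substitute $\chi^{mot}(P_a) = 2\langle 1\rangle$, and both displayed formulas then reduce to $(n+1)\langle 1\rangle$, in agreement with the direct computation $P_a^{(n)} \cong \coprod_{i=0}^{n}\Spec k$.
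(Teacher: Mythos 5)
Your proposal matches the paper's proof exactly: the paper simply says ``Apply $\chi^{mot}$ to the lemma above,'' and you have spelled out that bookkeeping, using additivity of $\chi^{mot}$ on disjoint unions together with $\chi^{mot}(\Spec k)=\langle 1\rangle$ and $\chi^{mot}(P_a)=\langle 2\rangle+\langle 2a\rangle$. Your sanity check of the degenerate case $a\in(k^\times)^2$ is also correct, since $2\langle 1\rangle = 2\langle 2\rangle$ in $\widehat{\mathrm{W}}(k)$ makes the two displayed formulas collapse to $(n+1)\langle 1\rangle$ as they should.
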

\begin{proof}
Apply $\chi^{mot}$ to the lemma above.
\end{proof}

For the rest of this subsection, if $\mathrm{char}(k)=0$, assume there exists a power structure on $\widehat{\mathrm{W}}(k)$ that is compatible with $\chi^{mot}$, i.e., functions $a_n: \widehat{\mathrm{W}}(k) \to \widehat{\mathrm{W}}(k)$ such that $\chi^{mot}(X^{(n)}) = a_n( \chi^{mot}(X))$ satisfying the axioms from Corollary $\ref{powerfns}$. If $\mathrm{char}(k)$ is positive, suppose that $\mathrm{char}(k) \neq 2$ and that the same is true if we replace $K_0(\mathrm{Var}_k)$ with $K_0(\Et_k)$ and $\chi^{mot}$ with $\mathrm{Tr}$. 

We show this assumption and Lemma $\ref{quadratic}$ determines the $a_n$ functions uniquely.
\begin{lemma}\label{2power}
We can compute $a_n(\langle 2 \rangle) = \langle 2^n \rangle$. 
\end{lemma}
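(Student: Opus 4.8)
The plan is to read off the power-structure functions at the special element $\langle 2 \rangle + \langle 1 \rangle$, which is realized as $\chi^{mot}(P_2)$, and then to isolate $a_n(\langle 2 \rangle)$ from it by a telescoping argument using that $a_i(\langle 1\rangle)=1$ for all $i$.

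First I would record the two inputs. By Definition \ref{pa} with $a=2$, together with $\langle 4 \rangle = \langle 2 \rangle^2 = \langle 1 \rangle$, we have $\chi^{mot}(P_2) = \langle 2 \rangle + \langle 4 \rangle = \langle 2 \rangle + \langle 1 \rangle$. By Corollary \ref{quadcalc}, again with $a=2$, the symmetric powers have motivic Euler characteristic $\frac{n+1}{2}(\langle 2 \rangle + \langle 1 \rangle)$ when $n$ is odd and $\langle 1 \rangle + \frac{n}{2}(\langle 2 \rangle + \langle 1 \rangle)$ when $n$ is even. Under the standing assumption that the power structure on $\widehat{\mathrm{W}}(k)$ is compatible with $\chi^{mot}$ (and $P_2, P_2^{(n)}$ are $0$-dimensional, so the argument applies in every characteristic $\neq 2$), this says precisely $a_n(\langle 2 \rangle + \langle 1 \rangle) = \chi^{mot}(P_2^{(n)})$.

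Next I would invoke the additivity axiom $a_n(r+s) = \sum_{i=0}^{n} a_i(r)\,a_{n-i}(s)$ from Corollary \ref{powerfns} with $r = \langle 2 \rangle$ and $s = \langle 1 \rangle = 1$. Since $a_i(\langle 1 \rangle) = a_i(1) = 1$ for every $i$, this collapses to $a_n(\langle 2 \rangle + \langle 1 \rangle) = \sum_{j=0}^{n} a_j(\langle 2 \rangle)$. Writing $b_n := a_n(\langle 2 \rangle)$, we get $b_0 = 1$ and $b_n = a_n(\langle 2 \rangle + \langle 1 \rangle) - a_{n-1}(\langle 2 \rangle + \langle 1 \rangle)$ for $n \geq 1$. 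Substituting the two closed formulas from the previous step and matching parities, the $(\langle 2 \rangle + \langle 1 \rangle)$-terms telescope: for $n$ odd one is left with $b_n = \langle 2 \rangle$, and for $n \geq 2$ even with $b_n = \langle 1 \rangle$ (consistent with $b_0 = \langle 1 \rangle$).

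Finally I would match this against $\langle 2^n \rangle$: because $\langle 2 \rangle^2 = \langle 4 \rangle = \langle 1 \rangle$ is the unit of $\widehat{\mathrm{W}}(k)$, we have $\langle 2^n \rangle = \langle 2 \rangle^n$, which equals $\langle 2 \rangle$ for $n$ odd and $\langle 1 \rangle$ for $n$ even, i.e.\ exactly $b_n$. There is no genuine obstacle here; the only points requiring a little care are the degenerate case where $2$ is already a square in $k$ (harmless, since then $\langle 2 \rangle = \langle 1 \rangle$ and Corollary \ref{quadcalc} still applies verbatim with $a=2$) and the parity bookkeeping in the telescoping step.
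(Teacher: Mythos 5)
Your proof is correct and takes essentially the same route as the paper's: the paper works with $P_{1/2}$ (which is the same variety as your $P_2$, since $k[\sqrt{1/2}] = k[\sqrt{2}]$), identifies $a_n(\langle 1 \rangle + \langle 2 \rangle) = \sum_{i=0}^{n} a_i(\langle 2 \rangle)$ via $a_i(1)=1$, and isolates $a_n(\langle 2 \rangle)$ by induction, which is exactly your telescoping step in different clothing. The only cosmetic difference is that the paper writes $\chi^{mot}(P_{1/2}^{(n)})$ as the explicit sum $\langle 1\rangle + \langle 2\rangle + \cdots + \langle 2^n\rangle$ before subtracting, while you subtract the two closed forms from Corollary \ref{quadcalc} directly; both are fine.
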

\begin{proof}
Proceed by induction on $n$. The cases $n=0,1$ are clear, so assume that the claim holds for all $m<n$. For any $b\in k^\times$, $\langle b^n \rangle = \langle \beta \rangle$ if $n$ is odd and $\langle 1 \rangle$ if $n$ is even. Therefore
$$\chi^{mot}(P_{1/2}^{(n)})=\langle1\rangle+\langle2\rangle+\langle2^2\rangle+\cdots+\langle 2^n
\rangle$$
by Lemma \ref{quadratic}. Using now that $a_n(\chi^{mot}(P_{1/2})) = \chi^{mot}(P_{1/2}^{(n)})$, we obtain
\begin{align*}
\langle1\rangle+\langle2\rangle+\langle2^2\rangle+\cdots+\langle 2^n\rangle &= a_n(\langle1\rangle+\langle2\rangle)=\sum_{i=0}^na_i(\langle2\rangle)
a_{n-i}(\langle1\rangle)\\
&=
\langle1\rangle+\langle2\rangle+\langle2^2\rangle+\cdots+\langle 2^{n-1}\rangle+a_n(\langle2\rangle)
\end{align*}
by the induction hypothesis. The claim is now clear. 
\end{proof}
\begin{cor}\label{ansdefn}
We can compute $a_n(\langle \alpha \rangle) = \langle \alpha^n \rangle + \lfloor \frac{n}{2} \rfloor \left( \langle 2 \rangle + \langle \alpha \rangle  - \langle 1 \rangle - \langle 2\alpha \rangle\right)$.
\end{cor}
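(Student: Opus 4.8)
The plan is to express $a_n(\langle a\rangle)$ in terms of quantities already computed — the values $\chi^{mot}(P_b^{(j)})$ from Corollary~\ref{quadcalc} and the values $a_j(\langle 2\rangle)=\langle 2^j\rangle$ from Lemma~\ref{2power} — by multiplying the corresponding power series in $\widehat{\mathrm{W}}(k)[[t]]$, i.e.\ by exploiting the additivity relation $a_n(r+s)=\sum_{i=0}^n a_i(r)a_{n-i}(s)$ of Corollary~\ref{powerfns}.

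\textbf{Main steps.} First I would observe that (assuming for the moment $2a\notin(k^\times)^2$) $\chi^{mot}(P_{2a})=\langle 2\rangle+\langle 2\cdot 2a\rangle=\langle 2\rangle+\langle a\rangle$, so the standing compatibility assumption gives $(1-t)^{-(\langle 2\rangle+\langle a\rangle)}=\sum_{j\ge 0}\chi^{mot}(P_{2a}^{(j)})t^j$, with coefficients read off from Corollary~\ref{quadcalc} (with parameter $2a$, noting $\langle 2\cdot 2a\rangle=\langle a\rangle$). Second, I would determine $(1-t)^{\langle 2\rangle}$: since $(1-t)^{-\langle 2\rangle}=\sum_{j\ge 0}\langle 2^j\rangle t^j$ by Lemma~\ref{2power} and $\langle 2\rangle^2=\langle 4\rangle=\langle 1\rangle$, a one-line telescoping computation gives $\bigl(\sum_{j\ge 0}\langle 2^j\rangle t^j\bigr)(1-\langle 2\rangle t)=1$, hence $(1-t)^{\langle 2\rangle}=1-\langle 2\rangle t$; equivalently $a_1(-\langle 2\rangle)=-\langle 2\rangle$ and $a_j(-\langle 2\rangle)=0$ for $j\ge 2$, which also follows directly from the recursion in Lemma~\ref{subgroup}. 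Multiplying, $(1-t)^{-\langle a\rangle}=(1-t)^{-(\langle 2\rangle+\langle a\rangle)}\cdot(1-t)^{\langle 2\rangle}=\bigl(\sum_j\chi^{mot}(P_{2a}^{(j)})t^j\bigr)(1-\langle 2\rangle t)$, and comparing coefficients of $t^n$ yields $a_n(\langle a\rangle)=\chi^{mot}(P_{2a}^{(n)})-\langle 2\rangle\,\chi^{mot}(P_{2a}^{(n-1)})$.

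\textbf{Finishing.} Plugging in the even and odd cases of Corollary~\ref{quadcalc} and simplifying via $\langle 2\rangle(\langle 2\rangle+\langle a\rangle)=\langle 1\rangle+\langle 2a\rangle$ (together with $\langle a^{2m}\rangle=\langle 1\rangle$ and $\langle a^{2m+1}\rangle=\langle a\rangle$) then produces exactly $\langle a^n\rangle+\lfloor n/2\rfloor(\langle 2\rangle+\langle a\rangle-\langle 1\rangle-\langle 2a\rangle)$; this is a short case check according to the parity of $n$.

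\textbf{The delicate point.} The step to watch is that $P_{2a}$ is an honest quadratic field extension, so that Corollary~\ref{quadcalc} applies, only when $2a\notin(k^\times)^2$, i.e.\ when $\langle a\rangle\neq\langle 2\rangle$. When $2a$ is a square — so $\langle a\rangle=\langle 2\rangle$ and $\langle 2a\rangle=\langle 1\rangle$ — I would argue separately: there $a_n(\langle a\rangle)=a_n(\langle 2\rangle)=\langle 2^n\rangle$ by Lemma~\ref{2power}, while the claimed right-hand side collapses to $\langle 2^n\rangle+2\lfloor n/2\rfloor(\langle 2\rangle-\langle 1\rangle)$, which equals $\langle 2^n\rangle$ because $\langle 2\rangle+\langle 2\rangle\cong\langle 1\rangle+\langle 1\rangle$ (a binary form is determined by its discriminant and one represented value, and $\langle 1,1\rangle$ represents $2$) forces $2\langle 2\rangle=2\langle 1\rangle$ in $\widehat{\mathrm{W}}(k)$. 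This same $2$-torsion identity also shows $\langle 2\rangle+\langle a\rangle-\langle 1\rangle-\langle 2a\rangle$ is killed by $2$, which is why the coefficient $\lfloor n/2\rfloor$ here may be replaced by the $n(n-1)/2$ appearing in the introduction; and it is what makes the Corollary~\ref{quadcalc} formula for $\chi^{mot}(P_{2a}^{(j)})$ remain correct even in the degenerate case, so one could alternatively run the argument uniformly.
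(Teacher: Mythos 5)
Your proof is correct and takes essentially the same approach as the paper. The central identity you derive, $a_n(\langle a\rangle)=\chi^{mot}(P_{2a}^{(n)})-\langle 2\rangle\,\chi^{mot}(P_{2a}^{(n-1)})$, coincides with the paper's $a_{n+1}(\langle a\rangle)=\chi^{mot}(P_{a/2}^{(n+1)})-\langle 2\rangle\,\chi^{mot}(P_{a/2}^{(n)})$ after reindexing, since $P_{2a}$ and $P_{a/2}$ are the same étale algebra; you arrive at it by observing $(1-t)^{\langle 2\rangle}=1-\langle 2\rangle t$ and multiplying series, while the paper rearranges the convolution sum directly — a cosmetic difference. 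Your explicit treatment of the degenerate case $2a\in(k^\times)^2$ is a nice supplementary observation: the paper quietly applies the formulas of Lemma~\ref{quadratic} and Corollary~\ref{quadcalc} (stated under the hypothesis that $P_{a/2}$ is a field) without noting that they persist in the split case, and your remark that $2\langle 1\rangle=2\langle 2\rangle$ in $\widehat{\mathrm{W}}(k)$ is exactly the fact that makes this harmless.
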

\begin{proof}
Note that $a_n(\langle 2 \rangle + \langle \alpha \rangle) = \chi^{mot}(P^{(n)}_{\frac{\alpha}{2}})$, since the power structures are compatible. However, we also see that
$$
a_{n+1} (\langle 2 \rangle + \langle \alpha \rangle) = \sum_{i=0}^{n+1} a_i(\langle \alpha \rangle) \langle 2^{n+1-i} \rangle,
$$
using the definition of $a_i$ and Lemma $\ref{2power}$. Therefore we see that 
$$
\langle 2 \rangle \left( \sum_{i=0}^n a_i(\langle \alpha \rangle) \langle 2^{n-i} \rangle \right) + a_{n+1}(\langle \alpha \rangle) = a_{n+1}(\langle 2 \rangle + \langle \alpha \rangle).
$$
Rearranging this and using that $a_n(\langle 2 \rangle + \langle \alpha \rangle) = \chi^{mot}(P^{(n)}_{\frac{\alpha}{2}})$ tells us
$$
a_{n+1}(\langle \alpha \rangle) = \chi^{mot}( P^{(n+1)}_{\frac{\alpha}{2}}) - \langle 2 \rangle \chi^{mot}(P^{(n)}_{\frac{\alpha}{2}}).
$$
Suppose now $n+1$ is even. Then we can compute $\chi^{mot}( P^{(n+1)}_{\frac{\alpha}{2}}) = \langle 1 \rangle + \frac{n+1}{2}( \langle 2 \rangle + \langle \alpha \rangle)$, and we can also compute $\chi^{mot}(P^{(n)}_{\frac{\alpha}{2}}) = \frac{n+1}{2}(\langle 2 \rangle + \langle \alpha \rangle)$. We therefore get
$$
a_{n+1}( \langle \alpha \rangle) = \langle 1 \rangle + \frac{n+1}{2}( \langle 2 \rangle + \langle \alpha \rangle)( \langle 1 \rangle - \langle 2 \rangle).
$$
Expanding $( \langle 2 \rangle + \langle \alpha \rangle)( \langle 1 \rangle - \langle 2 \rangle) = \langle 2 \rangle + \langle \alpha \rangle - \langle 1 \rangle - \langle 2\alpha \rangle$ gives the result.

Suppose $n+1$ is odd instead. Then we can compute $\chi^{mot}( P^{(n+1)}_{\frac{\alpha}{2}}) = \frac{n+2}2(\langle 2 \rangle + \langle \alpha \rangle)$ and similarly, $\chi^{mot}(P^{(n)}_{\frac{\alpha}{2}}) = \langle 1 \rangle + \frac{n}{2}( \langle 2 \rangle + \langle \alpha \rangle)$. We therefore obtain
\begin{align*}
a_{n+1}(\langle \alpha \rangle)  &=  \frac{n+2}2(\langle 2 \rangle + \langle \alpha \rangle) - \langle 2 \rangle - \frac{n}{2} \langle 2 \rangle \cdot (\langle 2 \rangle + \langle \alpha \rangle) \\
&= \langle \alpha \rangle + \frac{n}{2} ( \langle 2 \rangle + \langle \alpha \rangle) (\langle 1 \rangle - \langle 2 \rangle),
\end{align*}
and expanding the brackets gives the result. 
\end{proof}

\begin{rem}\label{welldefinedconcerns}
Corollary $\ref{ansdefn}$ does not guarantee that these $a_n$ functions give well defined maps $\widehat{\mathrm{W}}(k) \to \widehat{\mathrm{W}}(k)$. We can define $a_n(q)$ for any $q$ by writing $q = \sum_{i=1}^m \langle \alpha_i \rangle$, applying $a_n$, and using the sum formula that the $a_n$ must satisfy. However, it is not clear that the value of $a_n(q)$ is independent of the presentation of $q$. This is addressed in Corollary $\ref{finalpstruc}$. 
\end{rem}

\subsection{The power structure is well defined}
In this subsection we show the concerns raised in the remark above are not a problem. That is, we have well defined functions $a_n: \widehat{\mathrm{W}}(k) \to \widehat{\mathrm{W}}(k)$ agreeing with the computation from Corollary $\ref{ansdefn}$ that give rise to a power structure on $\widehat{\mathrm{W}}(k)$.
\begin{defn}\label{zkdefn}
Consider the group ring $\Z[k^\times]$, where we write $\langle \alpha \rangle$ to mean the element of $\Z[k^\times]$ corresponding to $\alpha$, for any $\alpha \in k^\times$. Let $R$ denote the subgroup of $\Z[k^\times]$ generated by elements of the form $\langle \alpha \rangle - \langle \alpha \beta^2 \rangle$ or of the form $(\langle \alpha \rangle + \langle \beta \rangle) - (\langle \alpha + \beta \rangle + \langle (\alpha+\beta)\alpha \beta \rangle)$, for any $\alpha,\beta \in k^\times$. By Theorem 4.1 of \cite{La}, we have that $\Z[k^\times]/R \cong \widehat{\mathrm{W}}(k)$.

Define $t_\alpha :=  ( \langle 2 \rangle + \langle \alpha \rangle - \langle 1 \rangle - \langle 2\alpha \rangle) \in \widehat{\mathrm{W}}(k)$. Define the functions $a_n: \Z[k^\times] \to \widehat{\mathrm{W}}(k)$ for all positive integers $n$ by
\begin{enumerate}
\item $a_1( q) = [q] \in \widehat{\mathrm{W}}(k) \cong \Z[k^\times]/R$,
\item $a_n(q + q') = a_n(q) + a_n(q') + \sum_{i=1}^{n-1} a_i(q)a_{n-i}(q')$,
\item $a_n(\langle \alpha \rangle) = \langle \alpha^n \rangle + \lfloor \frac{n}{2} \rfloor t_\alpha$.
\end{enumerate}
\end{defn}
This definition allows us to talk about these functions $a_n$ as well defined functions from $\Z[k^\times]$. We would like to show that these descend to functions $\widehat{\mathrm{W}}(k) \to \widehat{\mathrm{W}}(k)$. We first need some basic facts about the elements $t_\alpha \in \widehat{\mathrm{W}}(k)$.
\begin{lemma}
Let $\alpha \in k^\times$. Then $2 \langle \alpha \rangle = 2 \langle 2\alpha \rangle \in \widehat{\mathrm{W}}(k)$. 
\end{lemma}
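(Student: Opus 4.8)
The plan is to read the statement off directly from the presentation $\widehat{\mathrm{W}}(k) \cong \Z[k^\times]/R$ recorded in Definition \ref{zkdefn}, with no auxiliary machinery needed. The key point is that $R$ contains relators of the form $(\langle a \rangle + \langle b \rangle) - (\langle a+b \rangle + \langle (a+b)ab\rangle)$, which are available whenever $a + b \neq 0$; since $\mathrm{char}(k) \neq 2$ we may take $b = a$, and then $a + b = 2a \in k^\times$.

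First I would apply this relator with $b = a$ to get $\langle a \rangle + \langle a \rangle = \langle 2a \rangle + \langle 2a\cdot a^2\rangle$ in $\widehat{\mathrm{W}}(k)$. Then I would use the other family of relators in $R$, namely $\langle c\rangle - \langle cb^2\rangle$, with $c = 2a$ and $b = a$, to rewrite $\langle 2a\cdot a^2\rangle = \langle 2a\rangle$. Combining the two identities gives $2\langle a\rangle = 2\langle 2a\rangle$, as required.

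I expect no real obstacle here: the only subtlety is that the relator involving $a+b$ requires $a+b\neq 0$, which is precisely where the hypothesis $\mathrm{char}(k)\neq 2$ enters (through the choice $b=a$). As a sanity check one can also see the statement at the level of forms — the binary form $\langle 1,1\rangle$ represents $2$, hence $\langle 1,1\rangle \cong \langle 2,2\rangle$, and scaling by $a$ gives $\langle a,a\rangle \cong \langle 2a,2a\rangle$ — but the computation inside $\Z[k^\times]/R$ is the cleanest route and fits the conventions of the surrounding section, where the elements $t_a$ are manipulated in the same way.
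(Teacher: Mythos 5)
Your proof is correct and follows exactly the paper's own argument: apply the relator $(\langle a\rangle + \langle b\rangle) - (\langle a+b\rangle + \langle (a+b)ab\rangle)$ with $b=a$ to get $2\langle a\rangle = \langle 2a\rangle + \langle 2a^3\rangle$, then use the square-class relator to identify $\langle 2a^3\rangle = \langle 2a\rangle$. The observation that $\mathrm{char}(k)\neq 2$ is needed so that $a+a\neq 0$ is a nice touch the paper leaves implicit.
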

\begin{proof}
The second relation defining $R$, with $a=b$, gives $2 \langle \alpha \rangle = \langle 2\alpha \rangle + \langle 2\alpha^3 \rangle \in \widehat{\mathrm{W}}(k)$. We then apply the first relation defining $R$ to obtain $\langle 2\alpha^3 \rangle = \langle 2\alpha \rangle$, and the result is now clear.
\end{proof}
\begin{cor}\label{twotimesta}
The element $t_\alpha = ( \langle 2 \rangle + \langle \alpha \rangle - \langle 1 \rangle - \langle 2\alpha \rangle) \in \widehat{\mathrm{W}}(k)$ is $2$-torsion, and therefore satisfies $\langle 2 \rangle t_\alpha = t_\alpha$ for any $a$.
\end{cor}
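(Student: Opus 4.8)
The plan is to derive both claims directly from the preceding lemma, which gives $2\langle b\rangle = 2\langle 2b\rangle$ in $\widehat{\mathrm{W}}(k)$ for every $b \in k^\times$, together with the first relation defining $R$, namely $\langle b\rangle = \langle bc^2\rangle$ for all $b,c \in k^\times$.

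First I would check that $t_a$ is $2$-torsion. Expanding, $2t_a = (2\langle 2\rangle - 2\langle 1\rangle) + (2\langle a\rangle - 2\langle 2a\rangle)$. The second bracket vanishes by the lemma applied with $b=a$, and the first bracket vanishes by the lemma applied with $b=1$, which gives $2\langle 1\rangle = 2\langle 2\rangle$. Hence $2t_a = 0$.

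Next I would compute $\langle 2\rangle t_a$ by multiplying through and simplifying: $\langle 2\rangle t_a = \langle 4\rangle + \langle 2a\rangle - \langle 2\rangle - \langle 4a\rangle$. Since $4 = 2^2$ is a square, the first relation defining $R$ yields $\langle 4\rangle = \langle 1\rangle$ and $\langle 4a\rangle = \langle a\rangle$, so $\langle 2\rangle t_a = \langle 1\rangle + \langle 2a\rangle - \langle 2\rangle - \langle a\rangle = -t_a$. Combining this with the first step, where $-t_a = t_a$ because $t_a$ is $2$-torsion, gives $\langle 2\rangle t_a = t_a$.

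There is essentially no obstacle here; the only point requiring a little care is that multiplication by $\langle 2\rangle$ sends $\langle b\rangle$ to $\langle 2b\rangle$, and that the simplifications $\langle 4\rangle = \langle 1\rangle$, $\langle 4a\rangle = \langle a\rangle$ come from the square relation in $R$ rather than from the lemma itself.
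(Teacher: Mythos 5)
Your proof is correct and takes essentially the same route as the paper: the $2$-torsion claim follows by applying the preceding lemma twice (to $b=a$ and $b=1$), and the identity $\langle 2\rangle t_a = -t_a$ combined with $2$-torsion gives the second claim. You have merely spelled out the details that the paper leaves implicit.
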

\begin{proof}
The first part is immediate by the above relation. The second part follows since it is clear that $\langle 2 \rangle t_\alpha = -t_\alpha$, so since $t_\alpha$ is $2$-torsion this gives the result.
\end{proof}

\begin{cor}
Let $a_n$ be as in Definition $\ref{zkdefn}$. Then 
\begin{equation*}
a_n(\langle \alpha \rangle) =
\begin{cases}
 \langle \alpha^n \rangle &\text{ for $n \equiv 0,1 \pmod{4}$},\\
 \langle \alpha^n \rangle +  t_\alpha &\text{ for $n \equiv 2,3 \pmod{4}$. }
\end{cases}
\end{equation*}
\end{cor}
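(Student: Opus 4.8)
The plan is to read off the claim directly from Definition \ref{zkdefn} together with the $2$-torsion fact established in Corollary \ref{twotimesta}. By construction, $a_n(\langle a\rangle)=\langle a^n\rangle+\lfloor n/2\rfloor\, t_a$ in $\widehat{\mathrm{W}}(k)$, so the only thing to pin down is the class of the integer $\lfloor n/2\rfloor$ modulo $2$: since $2t_a=0$, the term $\lfloor n/2\rfloor\, t_a$ equals $0$ when $\lfloor n/2\rfloor$ is even and equals $t_a$ when $\lfloor n/2\rfloor$ is odd.

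It then remains to carry out the elementary case check on $n$ modulo $4$. Writing $n=4m+r$ with $r\in\{0,1,2,3\}$, one computes $\lfloor n/2\rfloor=2m$ for $r\in\{0,1\}$ and $\lfloor n/2\rfloor=2m+1$ for $r\in\{2,3\}$; hence $\lfloor n/2\rfloor$ is even exactly when $n\equiv 0,1\pmod 4$ and odd exactly when $n\equiv 2,3\pmod 4$. Combining this with the previous paragraph gives the stated formula. There is no real obstacle here: the content is entirely in Definition \ref{zkdefn} (which legislates $a_n(\langle a\rangle)$) and in Corollary \ref{twotimesta} (which makes $t_a$ depend only on the parity of its integer coefficient); the final step is just bookkeeping on residues mod $4$.
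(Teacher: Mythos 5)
Your proof is correct and follows the same route as the paper's: read off $a_n(\langle a\rangle)=\langle a^n\rangle+\lfloor n/2\rfloor t_a$ from Definition \ref{zkdefn}, invoke the $2$-torsion of $t_a$ from Corollary \ref{twotimesta}, and check the parity of $\lfloor n/2\rfloor$ mod $4$. You simply spell out the final bookkeeping step that the paper leaves implicit.
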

\begin{proof}
This depends on whether $\lfloor \frac{n}{2} \rfloor$ is even or odd, which is the congruence condition stated. 
\end{proof}

\begin{lemma}\label{prodts}
The element $t_\alpha$ satisfies $t_\alpha t_\beta=0$ for any $\alpha, \beta \in k^\times/(k^\times)^2$.
\end{lemma}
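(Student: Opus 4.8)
The plan is to reduce everything to the $2$-torsion property of $t_a$ established in Corollary \ref{twotimesta}, via a convenient factorization of $t_a$ in $\widehat{\mathrm{W}}(k)$. First I would observe that, using the multiplicativity relation $\langle x \rangle \langle y \rangle = \langle xy \rangle$, one has
\[
(\langle 2 \rangle - \langle 1 \rangle)(\langle 1 \rangle - \langle a \rangle) = \langle 2 \rangle - \langle 2a \rangle - \langle 1 \rangle + \langle a \rangle = t_a .
\]
So $t_a$ factors as $(\langle 2 \rangle - \langle 1 \rangle)(\langle 1 \rangle - \langle a \rangle)$, and likewise $t_b = (\langle 2 \rangle - \langle 1 \rangle)(\langle 1 \rangle - \langle b \rangle)$.

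Next I would compute the square of the common factor. Since $\langle 4 \rangle = \langle 2^2 \cdot 1 \rangle = \langle 1 \rangle$, we get
\[
(\langle 2 \rangle - \langle 1 \rangle)^2 = \langle 4 \rangle - 2 \langle 2 \rangle + \langle 1 \rangle = 2 \langle 1 \rangle - 2 \langle 2 \rangle = -2 (\langle 2 \rangle - \langle 1 \rangle).
\]
Multiplying the two factorizations and substituting this identity yields
\[
t_a t_b = (\langle 2 \rangle - \langle 1 \rangle)^2 (\langle 1 \rangle - \langle a \rangle)(\langle 1 \rangle - \langle b \rangle) = -2 \, (\langle 2 \rangle - \langle 1 \rangle)(\langle 1 \rangle - \langle a \rangle)(\langle 1 \rangle - \langle b \rangle) = -2 \, t_a (\langle 1 \rangle - \langle b \rangle).
\]
Finally, since $2 t_a = 0$ by Corollary \ref{twotimesta}, the right-hand side is $-(2 t_a)(\langle 1 \rangle - \langle b \rangle) = 0$, which proves the claim; note that $t_a$, hence the statement, depends only on the class of $a$ modulo squares, so restricting $a,b$ to $k^\times/(k^\times)^2$ loses nothing.

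There is essentially no serious obstacle here: the argument is a short formal computation in $\widehat{\mathrm{W}}(k)$. The only mildly non-obvious point is spotting the factorization $t_a = (\langle 2 \rangle - \langle 1 \rangle)(\langle 1 \rangle - \langle a \rangle)$, which turns the problem into an immediate consequence of $2$-torsion; an alternative, more pedestrian route would be to expand the product of the two four-term expressions directly and simplify repeatedly using the relation $2\langle x \rangle = 2 \langle 2x \rangle$, but that is messier and I would avoid it.
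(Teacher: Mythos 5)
Your proof is correct and is essentially the paper's argument: the paper writes $t_a = (\langle 2 \rangle + \langle a \rangle)(\langle 1 \rangle - \langle 2 \rangle)$ and kills $t_a t_b$ by noting $(\langle 1 \rangle - \langle 2 \rangle)^2 = 2\langle 1 \rangle - 2\langle 2 \rangle = 0$, which is the same factorization-and-squaring idea as yours, the common factor differing only by a sign. The one tiny stylistic difference is that you detour through $2t_a = 0$ (Corollary \ref{twotimesta}) rather than finishing immediately with $2\langle 1 \rangle - 2\langle 2 \rangle = 0$ (which is the unnumbered lemma just before it, with $a = 1$); both rest on the same relation, so this is cosmetic.
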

\begin{proof}
Note that $t_\alpha = (\langle 2 \rangle + \langle \alpha \rangle)(\langle 1 \rangle - \langle 2 \rangle)$. Since $(\langle 1 \rangle - \langle 2 \rangle)^2 = 2\langle 1 \rangle - 2 \langle 2 \rangle = 0$, we obtain the result.
\end{proof}

\begin{lemma}\label{tab}
For $a,b \in k^\times$, we have $\langle \alpha \rangle t_\beta - t_{\alpha \beta} = t_\alpha$.
\end{lemma}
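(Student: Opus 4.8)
The plan is to verify the identity by a direct computation in $\widehat{\mathrm{W}}(k)$, using only the multiplicativity relation $\langle c \rangle \langle d \rangle = \langle cd \rangle$ together with $\langle c^2 \rangle = \langle 1 \rangle$, and then to absorb a sign using the $2$-torsion statement of Corollary \ref{twotimesta}. The only genuinely ``surprising'' feature of the proof is that the naive expansion produces $-t_a$ rather than $t_a$, so the $2$-torsion of $t_a$ is what makes the stated identity correct.

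First I would expand the two terms on the left-hand side using Definition \ref{zkdefn}. Since $\langle a \rangle \langle 2 \rangle = \langle 2a \rangle$, $\langle a \rangle \langle b \rangle = \langle ab \rangle$, $\langle a \rangle \langle 1 \rangle = \langle a \rangle$ and $\langle a \rangle \langle 2b \rangle = \langle 2ab \rangle$, one gets
$$
\langle a \rangle t_b = \langle 2a \rangle + \langle ab \rangle - \langle a \rangle - \langle 2ab \rangle,
$$
while by definition $t_{ab} = \langle 2 \rangle + \langle ab \rangle - \langle 1 \rangle - \langle 2ab \rangle$. Subtracting, the summands $\langle ab \rangle$ and $\langle 2ab \rangle$ cancel, leaving
$$
\langle a \rangle t_b - t_{ab} = \langle 2a \rangle - \langle a \rangle - \langle 2 \rangle + \langle 1 \rangle = -\,t_a .
$$

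Finally I would invoke Corollary \ref{twotimesta}, which says $t_a$ is $2$-torsion, so $-t_a = t_a$ and hence $\langle a \rangle t_b - t_{ab} = t_a$, as claimed. There is no real obstacle: the computation is a few lines, and the only point requiring care is keeping track of the sign and remembering that it is invisible because $t_a + t_a = 0$. (Alternatively, one could run the same argument after writing $t_c = (\langle 2 \rangle + \langle c \rangle)(\langle 1 \rangle - \langle 2 \rangle)$ as in the proof of Lemma \ref{prodts} and factoring $(\langle 1 \rangle - \langle 2 \rangle)$ out of everything, but the term-by-term expansion above is the most transparent.)
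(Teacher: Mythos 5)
Your proof is correct and uses essentially the same direct computation as the paper; the paper factors out $(\langle 1 \rangle - \langle 2 \rangle)$ and lands on $t_{2a}$ (which equals $t_a$ via the same $2$-torsion relation $2\langle 1 \rangle = 2\langle 2 \rangle$ that you invoke), whereas you expand term by term and land on $-t_a$, but the underlying ideas are identical. If anything, your version is slightly more explicit about where the $2$-torsion of $t_a$ is needed, which the paper's ``$= t_{2a} = t_a$'' leaves implicit.
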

\begin{proof}
This is an easy direct computation:
$$
\langle \alpha \rangle t_\beta - t_{\alpha \beta} = (\langle 1 \rangle - \langle 2 \rangle) (\langle 2\alpha \rangle + \langle \alpha \beta \rangle - \langle 2 \rangle - \langle \alpha \beta \rangle) = t_{2\alpha} = t_\alpha.
$$
\end{proof}
\begin{cor}\label{ctab}
For any $\alpha,\beta,\gamma \in k^\times$, $\langle \gamma \rangle (t_\alpha + t_\beta) = t_{\alpha \gamma} + t_{\beta \gamma}$. In particular we can compute $\langle \alpha \rangle t_\alpha = t_\alpha$ and $\langle \alpha \rangle t_\beta + \langle \beta \rangle t_\alpha = t_\beta + t_\alpha$.
\end{cor}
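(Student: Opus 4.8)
The plan is to derive the whole statement directly from Lemma \ref{tab} together with the fact, recorded in Corollary \ref{twotimesta}, that every element $t_c \in \widehat{\mathrm{W}}(k)$ is $2$-torsion. Lemma \ref{prodts} is not needed here; the only mechanism at work is that Lemma \ref{tab} lets one pull a scalar $\langle c \rangle$ across a single $t$, at the cost of an extra $t_c$ term, and that all such extra terms come in even multiples which then vanish.

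Concretely, first I would rearrange Lemma \ref{tab} (with $b$ there replaced by $a$, and $a$ there replaced by $c$) into the form $\langle c \rangle t_a = t_{ac} + t_c$, and likewise $\langle c \rangle t_b = t_{bc} + t_c$. Adding these two identities gives
$$
\langle c \rangle (t_a + t_b) = t_{ac} + t_{bc} + 2 t_c,
$$
and since $2 t_c = 0$ by Corollary \ref{twotimesta}, the claimed formula $\langle c \rangle (t_a+t_b) = t_{ac} + t_{bc}$ follows at once. For the two special cases I would just specialise. For $\langle a \rangle t_a = t_a$, apply Lemma \ref{tab} with both arguments equal to $a$: $\langle a \rangle t_a = t_{a^2} + t_a$, and $t_{a^2} = 0$ because $\langle a^2 \rangle = \langle 1 \rangle$ and $\langle 2a^2 \rangle = \langle 2 \rangle$ in $\widehat{\mathrm{W}}(k)$. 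For $\langle a \rangle t_b + \langle b \rangle t_a = t_a + t_b$, apply Lemma \ref{tab} to each summand, getting $\langle a \rangle t_b = t_{ab} + t_a$ and $\langle b \rangle t_a = t_{ab} + t_b$, then sum and discard the term $2 t_{ab} = 0$.

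There is no substantial obstacle: the statement is pure bookkeeping on top of Lemma \ref{tab}. The only points that require (minor) care are remembering that $t_{a^2}$ vanishes, equivalently that $t_c$ depends only on the class of $c$ in $k^\times/(k^\times)^2$, and noting that because every $t$-term is $2$-torsion the minus signs in Lemma \ref{tab} and the doubled terms $2t_c$, $2t_{ab}$ may be freely dropped throughout.
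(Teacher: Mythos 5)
Your proof is correct. It takes a mildly different route from the paper's: you apply Lemma \ref{tab} separately to $\langle c\rangle t_a$ and $\langle c\rangle t_b$ and then cancel the resulting $2t_c$ using the $2$-torsion property from Corollary \ref{twotimesta}, whereas the paper first rewrites $t_a + t_b = \langle a\rangle t_{ab}$ (itself an instance of Lemma \ref{tab}) and then applies Lemma \ref{tab} once more to $\langle ac\rangle t_{ab}$, so that no doubled term ever appears and the $2$-torsion cancellation is only needed for the ``in particular'' statements. The two arguments are built from exactly the same ingredients, and the difference is purely one of bookkeeping: the paper's version is slightly slicker for the main identity, while yours is more uniform in that it handles all three claims by the same two-step mechanism (pull $\langle c\rangle$ across via Lemma \ref{tab}, then discard even multiples of $t$-terms). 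Either is a fine proof, and you correctly flag the only subtleties --- that $t_{a^2}=0$ so $t$ depends only on the square class, and that the $2$-torsion property justifies dropping signs and doubled terms.
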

\begin{proof}
Use the above lemma to write $t_\alpha+t_\beta = \langle \alpha \rangle t_{\alpha \beta}$, then use the above lemma again. The second part is an application of the first part, using that $t_{\alpha^2}=0$ and $t_{\alpha \beta}$ is $2$-torsion.
\end{proof}

\begin{lemma}\label{sumts}
For all $\alpha,\beta$ and for all $n$, the elements $t_\alpha$ satisfy $t_\beta +  t_\alpha = t_{(\alpha+\beta)\alpha \beta} +t_{\alpha+\beta}.$
\end{lemma}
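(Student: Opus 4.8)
The plan is to reduce this additive identity for the elements $t_a$ to the Witt relation $\langle a \rangle + \langle b \rangle = \langle a+b \rangle + \langle (a+b)ab \rangle$, using the product expression $t_a = (\langle 1 \rangle - \langle 2 \rangle)(\langle 2 \rangle + \langle a \rangle)$ that already appeared in the proof of Lemma \ref{prodts}. (Note the displayed statement does not in fact involve $n$, so the ``for all $n$'' is vacuous; implicitly we assume $a+b \neq 0$, since otherwise $\langle (a+b)ab \rangle$ is not even defined.)

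First I would record the vanishing $(\langle 1 \rangle - \langle 2 \rangle)\cdot 2\langle 2 \rangle = 0$. This follows from the special case $a=1$ of the lemma preceding Corollary \ref{twotimesta}, which gives $2\langle 1 \rangle = 2\langle 2 \rangle$ in $\widehat{\mathrm{W}}(k)$, since then
\[
(\langle 1 \rangle - \langle 2 \rangle)\cdot 2\langle 2 \rangle = 2\langle 2 \rangle - 2\langle 4 \rangle = 2\langle 2 \rangle - 2\langle 1 \rangle = 0 .
\]
Adding the factorisations of $t_a$ and $t_b$ and applying this identity yields
\[
t_a + t_b = (\langle 1 \rangle - \langle 2 \rangle)\big(2\langle 2 \rangle + \langle a \rangle + \langle b \rangle\big) = (\langle 1 \rangle - \langle 2 \rangle)\big(\langle a \rangle + \langle b \rangle\big).
\]

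Next I would invoke Theorem 4.1 of \cite{La}, i.e. the presentation $\widehat{\mathrm{W}}(k) \cong \Z[k^\times]/R$ of Definition \ref{zkdefn}: the second family of generators of $R$ gives precisely $\langle a \rangle + \langle b \rangle = \langle a+b \rangle + \langle (a+b)ab \rangle$ in $\widehat{\mathrm{W}}(k)$. Substituting this into the previous display and running the factorisation backwards,
\[
t_a + t_b = (\langle 1 \rangle - \langle 2 \rangle)\big(\langle a+b \rangle + \langle (a+b)ab \rangle\big) = t_{a+b} + t_{(a+b)ab},
\]
which is the claimed equality.

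I do not expect any real obstacle here: the whole content is the product expression for $t_a$ together with the standard presentation of $\widehat{\mathrm{W}}(k)$, and the only points needing a word of justification are the vanishing displayed above and the observation that the degenerate case $a+b=0$ is excluded. One could alternatively attempt to deduce the identity from Lemma \ref{tab} and Corollary \ref{ctab} alone, but those statements do not themselves encode the Witt relation $\langle a \rangle + \langle b \rangle = \langle a+b \rangle + \langle (a+b)ab \rangle$, so that relation must be brought in somewhere; invoking it directly as above is the cleanest route.
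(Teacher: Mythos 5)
Your proof is correct and takes essentially the same approach as the paper: factor $t_a = (\langle 1 \rangle - \langle 2 \rangle)(\langle 2 \rangle + \langle a \rangle)$, add the two factorisations, and apply the Witt relation $\langle a \rangle + \langle b \rangle = \langle a+b \rangle + \langle (a+b)ab \rangle$. The side observation that $(\langle 1 \rangle - \langle 2 \rangle)\cdot 2\langle 2\rangle = 0$ is a harmless cosmetic simplification; the paper simply keeps the common $2\langle 2\rangle$ term inside the factor on both sides.
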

\begin{proof}
For any $\alpha,\beta$, we compute
$$
t_\alpha + t_\beta = (\langle 2 \rangle - \langle 1 \rangle)( \langle 2 \rangle + \langle 2 \rangle + \langle \alpha \rangle + \langle \beta \rangle),
$$
so the lemma holds using the relation $\langle \alpha \rangle + \langle \beta \rangle = \langle \alpha+\beta \rangle + \langle (\alpha+\beta)\alpha \beta \rangle$ in $\widehat{\mathrm{W}}(k)$.
\end{proof}

\begin{lemma}\label{compute}
We can compute
\begin{equation*}
a_n(\langle \alpha \rangle + \langle \beta \rangle) = 
\begin{cases}
(1+\frac{n}2)\langle 1 \rangle + \frac{n}{2}(\langle \alpha \beta \rangle + t_\alpha + t_\beta) & n \text{ even}, \\
\frac{n+1}{2}( \langle \alpha \rangle + \langle \beta \rangle)& n \text{ odd}.\\
\end{cases}
\end{equation*}
\end{lemma}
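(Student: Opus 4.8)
The plan is to compute $a_n(\langle a\rangle + \langle b\rangle)$ directly from the recursion in Definition~\ref{zkdefn}, which, with the convention $a_0\equiv 1$, simply says that $a_n(\langle a\rangle + \langle b\rangle) = \sum_{i=0}^n a_i(\langle a\rangle)\,a_{n-i}(\langle b\rangle)$. Substituting $a_i(\langle a\rangle) = \langle a^i\rangle + \lfloor i/2\rfloor t_a$ and expanding the product, the cross term $\lfloor i/2\rfloor\lfloor (n-i)/2\rfloor\, t_a t_b$ vanishes by Lemma~\ref{prodts}, so
\[
a_n(\langle a\rangle + \langle b\rangle) = \sum_{i=0}^n \langle a^i b^{n-i}\rangle \;+\; t_b\sum_{i=0}^n \left\lfloor\tfrac{n-i}{2}\right\rfloor\langle a^i\rangle \;+\; t_a \sum_{i=0}^n \left\lfloor\tfrac{i}{2}\right\rfloor\langle b^{n-i}\rangle .
\]
I would then evaluate the three sums separately, in each case splitting the index range into its even and odd parts and using that $\langle c^j\rangle = \langle 1\rangle$ or $\langle c\rangle$ according to the parity of $j$.

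For the first sum one just counts even versus odd $i\in\{0,\dots,n\}$: this gives $\tfrac{n+1}{2}(\langle a\rangle + \langle b\rangle)$ when $n$ is odd and $(1+\tfrac n2)\langle 1\rangle + \tfrac n2\langle ab\rangle$ when $n$ is even, which is exactly the ``$\langle\cdot\rangle$-part'' of the claimed formula. For the other two sums I would compute the elementary sums of $\lfloor\cdot\rfloor$ over the even and odd indices (each is a sum of consecutive integers, so a value of the shape $\tfrac{m(m\pm1)}{2}$ with $n=2m$ or $n=2m+1$), then rewrite $\langle a\rangle t_b = t_a + t_{ab}$ and $\langle b\rangle t_a = t_b + t_{ab}$ via Lemma~\ref{tab}, and finally simplify using that $t_a,t_b,t_{ab}$ are $2$-torsion (Corollary~\ref{twotimesta}). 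When $n$ is odd, the $t_a$- and $t_b$-contributions combine into $2$ times an integer multiple of $t_a+t_b+t_{ab}$, hence into $0$, leaving only the odd-$n$ formula. When $n=2m$ is even, the same bookkeeping collapses the two contributions to $m^2(t_a+t_b)$, and since $m^2\equiv m\pmod 2$ and $t_a+t_b$ is $2$-torsion this equals $m(t_a+t_b) = \tfrac n2(t_a+t_b)$, giving the even-$n$ formula.

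The only genuine obstacle is the parity bookkeeping: keeping straight which of $i$ and $n-i$ is even in each regime, getting the four floor-sums right, and being disciplined about reducing modulo the $2$-torsion relations at the end. A convenient sanity check is $n=2$, where the recursion immediately gives $a_2(\langle a\rangle + \langle b\rangle) = \langle a^2\rangle + t_a + \langle b^2\rangle + t_b + \langle a\rangle\langle b\rangle = 2\langle 1\rangle + \langle ab\rangle + t_a + t_b$, matching the even case. I do not expect to need induction or any input beyond the $t$-arithmetic already established in this subsection; the whole statement reduces to expanding one convolution and simplifying.
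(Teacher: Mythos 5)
Your proposal is correct, and I verified the bookkeeping: in the even case $n=2m$ the two torsion sums come out to $\tfrac{m(m+1)}2 (t_a+t_b) + \tfrac{m(m-1)}2(t_a+t_b+2t_{ab})$, which after killing $2t_{ab}$ collapses to $m^2(t_a+t_b) = m(t_a+t_b)$; in the odd case $n=2m+1$ both torsion sums equal $\tfrac{m(m+1)}2(t_a+t_b+t_{ab})$, and their total $m(m+1)(t_a+t_b+t_{ab})$ vanishes since $m(m+1)$ is even. This is the same underlying computation as in the paper — expand the convolution $\sum_i a_i(\langle a\rangle)a_{n-i}(\langle b\rangle)$, drop the $t_at_b$ term by Lemma~\ref{prodts}, and use Lemma~\ref{tab} and the $2$-torsion relations — but you organize it differently. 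The paper fixes $n\pmod 4$ up front, tabulates the individual products $a_i(\langle a\rangle)a_{n-i}(\langle b\rangle)$ according to $i\pmod 4$, and then sums; this makes each step transparent but forces a four-way case split. You instead split the expansion into a ``plain'' piece and two torsion pieces, evaluate the floor sums once, and only invoke the parity of $n$ at the very end. The payoff of your route is that the mod-$4$ split disappears entirely (it is absorbed into the single fact $m^2\equiv m\pmod 2$), and the structure of the answer — why the torsion part is $\tfrac n2(t_a+t_b)$ for $n$ even and $0$ for $n$ odd — is more visible. The slight cost is that one must be more careful writing out four floor-sums rather than reading contributions off a table; your $n=2$ sanity check is a good guard against that.
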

\begin{proof}
We show this by direct computation, splitting this into $4$ cases depending on $n \pmod 4$. The proof is a straightforward direct computation in each case, and we work through the case $n \equiv 0 \pmod{4}$, leaving the other cases to the reader. We compute
\begin{align*}
a_n(\langle \alpha \rangle + \langle \beta \rangle) &= a_n(\langle \alpha \rangle) + a_n(\langle \beta \rangle) + \sum_{i=1}^{n-1} a_i( \langle \alpha \rangle)a_{n-i}(\langle \beta \rangle)\\
 &= \langle \alpha^n \rangle + \langle \beta^n \rangle + \sum_{i=1}^{n-1} a_i( \langle \alpha \rangle)a_{n-i}(\langle \beta \rangle).
\end{align*}
Suppose $n \equiv 0 \pmod{4}$. Expanding $a_i(\langle \alpha \rangle)$ and $a_{n-i}(\langle \beta \rangle)$ and using Lemma $\ref{prodts}$, we obtain:
\begin{equation*}
a_i(\langle \alpha \rangle)a_{n-i}(\langle \beta \rangle)=
\begin{cases}
\langle 1 \rangle &\text{ if $i \equiv 0 \pmod{4}$,} \\
\langle \alpha \beta \rangle + \langle \alpha \rangle t_\beta  &\text{ if $i \equiv 1 \pmod{4}$,} \\
\langle 1 \rangle +  t_\alpha + t_\beta &\text{ if $i \equiv 2 \pmod{4}$,}\\
\langle \alpha \beta \rangle + \langle \beta \rangle t_\alpha &\text{ if $i \equiv 3 \pmod{4}$}.
\end{cases}
\end{equation*}
Let $m=\frac{n}4$. Then each term above appears $m$ times in the sum $\sum_{i=1}^{n-1} a_i( \langle \alpha \rangle)a_{n-i}(\langle \beta \rangle)$, except for the $i \equiv 0 \pmod{4}$ case, which only appears $m-1$ times in the sum. We then see that
$$
 \sum_{i=1}^{n-1} a_i( \langle \alpha \rangle)a_{n-i}(\langle \beta \rangle) = m( 2 \langle 1 \rangle + 2 \langle \alpha \beta \rangle + (t_\alpha+t_\beta) + (\langle \beta \rangle t_\alpha + \langle \alpha \rangle t_\beta)) - (\langle 1 \rangle).
$$
By the previous lemma, $\langle \beta \rangle t_\alpha + \langle \alpha \rangle t_\beta = t_\alpha+t_\beta$, so since $t_\alpha,t_\beta$ are $2$-torsion, this gives us 
$$
a_n(\langle \alpha \rangle + \langle \beta \rangle) = 2m \langle \alpha \beta \rangle + (2m+1) \langle 1 \rangle.
$$
This gives the result for $n \equiv 0 \pmod{4}$.
\end{proof}
\begin{cor}\label{welldefined}
The functions $a_i: \Z[k^\times] \to \widehat{\mathrm{W}}(k)$ factor through the quotient $\Z[k^\times] \to \widehat{\mathrm{W}}(k)$.
\end{cor}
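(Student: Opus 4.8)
The plan is to repackage all the functions $a_n$ into a single group homomorphism and then check that it annihilates the relations cutting out $\widehat{\mathrm{W}}(k)$. Consider the map
$$
\Phi \colon \Z[k^\times] \longrightarrow 1 + t\cdot \widehat{\mathrm{W}}(k)[[t]], \qquad \Phi(q) = 1 + \sum_{n \geq 1} a_n(q)\, t^n,
$$
where the target is the abelian group of power series with constant term $1$ under multiplication. Property $(2)$ of Definition $\ref{zkdefn}$ says precisely that $\Phi(q+q') = \Phi(q)\,\Phi(q')$, so $\Phi$ is a homomorphism of abelian groups; since $\Z[k^\times]$ is free on the symbols $\langle a\rangle$, $\Phi$ is determined by the power series $\Phi(\langle a\rangle)$. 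Because $\widehat{\mathrm{W}}(k) \cong \Z[k^\times]/R$ by Theorem 4.1 of $\cite{La}$, the $a_n$ descend to functions $\widehat{\mathrm{W}}(k) \to \widehat{\mathrm{W}}(k)$ exactly when $\Phi$ factors through this quotient, i.e.\ when $R \subseteq \ker\Phi$. Since $\ker\Phi$ is a subgroup and $R$ is generated by the two families $\langle a\rangle - \langle ab^2\rangle$ and $(\langle a\rangle + \langle b\rangle) - (\langle a+b\rangle + \langle (a+b)ab\rangle)$, it is enough to verify, for all relevant $a,b \in k^\times$ and all $n$, that
\begin{enumerate}
\item $a_n(\langle a\rangle) = a_n(\langle ab^2\rangle)$, and
\item $a_n(\langle a\rangle + \langle b\rangle) = a_n(\langle a+b\rangle + \langle (a+b)ab\rangle)$.
\end{enumerate}

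For $(1)$ I would use the closed formula $a_n(\langle c\rangle) = \langle c^n\rangle + \lfloor n/2\rfloor\, t_c$: in $\widehat{\mathrm{W}}(k)$ one has $\langle (ab^2)^n\rangle = \langle a^n (b^n)^2\rangle = \langle a^n\rangle$, and since $\langle ab^2\rangle = \langle a\rangle$ and $\langle 2ab^2\rangle = \langle 2a\rangle$ one also gets $t_{ab^2} = t_a$, so the two sides coincide term by term. For $(2)$ I would feed $c = a+b$ and $d = (a+b)ab$ into Lemma $\ref{compute}$. When $n$ is odd both sides equal $\tfrac{n+1}{2}$ times $\langle a\rangle + \langle b\rangle$, which equals $\langle c\rangle + \langle d\rangle$ by the defining relation of $R$, so they are equal. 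When $n$ is even one must compare $(1+\tfrac n2)\langle 1\rangle + \tfrac n2(\langle ab\rangle + t_a + t_b)$ with $(1+\tfrac n2)\langle 1\rangle + \tfrac n2(\langle cd\rangle + t_c + t_d)$; here $cd = (a+b)^2 ab$ gives $\langle cd\rangle = \langle ab\rangle$ in $\widehat{\mathrm{W}}(k)$, and Lemma $\ref{sumts}$ gives $t_c + t_d = t_{a+b} + t_{(a+b)ab} = t_a + t_b$. Hence the two sides agree, $\Phi$ kills $R$, and the $a_n$ descend to well-defined maps $\widehat{\mathrm{W}}(k) \to \widehat{\mathrm{W}}(k)$.

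I do not expect a serious obstacle here: once Lemma $\ref{compute}$ is in hand the argument is pure bookkeeping, and the only mildly delicate point is the even case of $(2)$, where one must remember that $\langle\,\cdot\,\rangle$ is blind to squares (so that $\langle (a+b)^2 ab\rangle = \langle ab\rangle$) and must invoke Lemma $\ref{sumts}$ to see that the two-torsion corrections $t_c + t_d$ and $t_a + t_b$ coincide. Everything else is forced by the formal fact that $\Phi$ is a homomorphism out of a free abelian group, so that checking triviality on a generating set of $R$ is enough.
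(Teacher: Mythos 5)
Your proof is correct and follows essentially the same route as the paper: reduce, via Theorem 4.1 of Lam, to checking the two generating relations of $R$, handle the first by the closed formula for $a_n(\langle a\rangle)$, and handle the second using Lemma \ref{compute} together with Lemma \ref{sumts}. Your repackaging of the $a_n$ into a single group homomorphism $\Phi$ into $1+t\cdot\widehat{\mathrm{W}}(k)[[t]]$ is a cleaner way to justify why checking $\Phi$ on the generators of $R$ suffices; the paper uses this reduction implicitly but does not spell it out.
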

\begin{proof}
By Theorem 4.1 of \cite{La}, we only need to show that for every $a,b \in k^\times$, we have equalities $a_n(\langle \alpha \rangle) = a_n(\langle ab^2 \rangle)$ and $a_n( \langle \alpha \rangle + \langle \beta \rangle) = a_n( \langle a+b \rangle + \langle (a+b)ab \rangle)$. It's clear that the first relation is satisfied. The previous lemma and Lemma $\ref{sumts}$ allow us to see that 
$$
a_i(\langle \alpha \rangle + \langle \beta \rangle) = a_i (\langle (\alpha+\beta) \rangle + \langle (\alpha+\beta)\alpha \beta \rangle),
$$ 
which gives the relation as required.
\end{proof}
This means that we can indeed take $a_n([q])$ for any $[q] \in \widehat{\mathrm{W}}(k)$, which addresses the concerns raised in Remark $\ref{welldefinedconcerns}$. From here we will slightly abuse notation and write $a_n$ to mean the function from $\widehat{\mathrm{W}}(k)=\Z[k^\times]/R \to \widehat{\mathrm{W}}(k)$ as well.
\begin{cor}\label{finalpstruc}
The functions $a_n: \widehat{\mathrm{W}}(k) \to \widehat{\mathrm{W}}(k)$ determine a power structure on $\widehat{\mathrm{W}}(k)$.
\end{cor}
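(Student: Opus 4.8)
The plan is to deduce the statement directly from Corollary \ref{powerfns}: a power structure on a ring is the same data as a family of functions $a_i$ satisfying $a_0\equiv 1$, $a_1=\mathrm{Id}$, $a_i(0)=0$, $a_i(1)=1$ and the additivity relation $a_n(r+s)=\sum_{i=0}^n a_i(r)a_{n-i}(s)$. By Corollary \ref{welldefined} the functions of Definition \ref{zkdefn} descend to genuine maps $a_n\colon\widehat{\mathrm{W}}(k)\to\widehat{\mathrm{W}}(k)$, so all that is left is to check that these descended functions obey the five axioms. In other words, the essential difficulty has already been absorbed into Corollary \ref{welldefined} (and hence into the relations among the $t_a$ proved in Lemmas \ref{prodts}, \ref{tab} and \ref{sumts}); the remaining step is bookkeeping and I do not expect a genuine obstacle here.

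Concretely, first I would set $a_0\equiv 1$ by convention and note $a_1=\mathrm{Id}$, which is part (1) of Definition \ref{zkdefn} (it holds on $\Z[k^\times]$, hence on the quotient $\widehat{\mathrm{W}}(k)=\Z[k^\times]/R$). For additivity: part (2) of Definition \ref{zkdefn} is exactly $a_n(q+q')=\sum_{i=0}^n a_i(q)a_{n-i}(q')$ once the $i=0$ and $i=n$ terms are separated off using $a_0\equiv 1$, and this identity holds in $\Z[k^\times]$ by construction. Lifting arbitrary $r,s\in\widehat{\mathrm{W}}(k)$ along $\pi\colon\Z[k^\times]\to\widehat{\mathrm{W}}(k)$ and applying $\pi$ transports it to $\widehat{\mathrm{W}}(k)$, since by Corollary \ref{welldefined} the value $a_m(q)$ depends only on $\pi(q)$. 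Equivalently, one may phrase part (2) as saying that $q\mapsto 1+\sum_{n\ge 1}a_n(q)\,t^n$ is a homomorphism from $\Z[k^\times]$ to the abelian group $(1+t\cdot\widehat{\mathrm{W}}(k)[[t]],\times)$, which descends through $R$ by Corollary \ref{welldefined}.

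For the two remaining normalisations: $a_n(0)=0$ follows because the homomorphism $q\mapsto 1+\sum_{n\ge 1}a_n(q)\,t^n$ sends the identity $0$ of $\Z[k^\times]$ to $1$ (or, more pedestrianly, by inducting on $n$ using part (2) with second argument $0$), and this passes to $\widehat{\mathrm{W}}(k)$. And $a_n(1)=1$ since the unit of $\widehat{\mathrm{W}}(k)$ is $\langle 1\rangle$, for which $t_1=\langle 2\rangle+\langle 1\rangle-\langle 1\rangle-\langle 2\rangle=0$, so part (3) of Definition \ref{zkdefn} gives $a_n(\langle 1\rangle)=\langle 1^n\rangle=\langle 1\rangle=1$.

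With $a_0\equiv 1$, $a_1=\mathrm{Id}$, $a_i(0)=0$, $a_i(1)=1$ and the additivity relation all verified, Corollary \ref{powerfns} produces a (finitely determined) power structure on $\widehat{\mathrm{W}}(k)$ whose associated functions are precisely the $a_n$, which is the assertion. As noted above, the only subtle ingredient is Corollary \ref{welldefined}; once the $a_n$ are known to be well defined on $\widehat{\mathrm{W}}(k)$, the axioms required by Corollary \ref{powerfns} are built into Definition \ref{zkdefn} together with the trivial computation $t_1=0$.
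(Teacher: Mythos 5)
Your proof is correct and takes essentially the same route as the paper: the paper's own proof is the one-line ``We apply Corollary~\ref{powerfns}, and the result is immediate'', implicitly relying on Corollary~\ref{welldefined} for well-definedness and on the fact that the axioms of Corollary~\ref{powerfns} are built into Definition~\ref{zkdefn}. You have simply spelled out the routine verifications (in particular $a_n(0)=0$ by induction and $a_n(\langle 1\rangle)=\langle 1\rangle$ via $t_1=0$) that the paper leaves to the reader.
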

\begin{proof}
We apply Corollary $\ref{powerfns}$, and the result is immediate. 
\end{proof}
\begin{cor}\label{maincor1}
There is a unique power structure, $a_n$, on $\widehat{\mathrm{W}}(k)$ such that:
$$
a_n(\langle \alpha \rangle) = \langle \alpha^n \rangle +  \frac{n(n-1)}2 t_\alpha.
$$
Moreover, if $\mathrm{char}(k)=0$ (resp. $\mathrm{char}(k) \neq 2$) there exists a power structure on $\widehat{\mathrm{W}}(k)$ such that $\chi^{mot}: K_0(\mathrm{Var}_k) \to \widehat{\mathrm{W}}(k)$ (resp. $\mathrm{Tr}: K_0(\Et_k) \to \widehat{\mathrm{W}}(k)$) respects the power structures, then the power structure is the one above.
\end{cor}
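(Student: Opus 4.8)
The plan is to assemble ingredients that are, by this point, essentially all in place, so the proof itself will be short.

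\emph{Existence.} The functions $a_n$ of Definition \ref{zkdefn} descend to well-defined maps $\widehat{\mathrm{W}}(k)\to\widehat{\mathrm{W}}(k)$ by Corollary \ref{welldefined}; they satisfy the additivity axiom $a_n(q+q')=\sum_{i=0}^n a_i(q)a_{n-i}(q')$ by construction, and the remaining axioms of Corollary \ref{powerfns} are immediate ($a_0\equiv1$ and $a_1=\mathrm{Id}$ by definition, $a_i(\langle1\rangle)=\langle1\rangle$ since $t_1=0$, and $a_i(0)=0$ by additivity applied to $0=0+0$), so the $a_n$ determine a power structure --- this is exactly Corollary \ref{finalpstruc}. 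The displayed formula then follows from clause (3) of Definition \ref{zkdefn}, which gives $a_n(\langle a\rangle)=\langle a^n\rangle+\lfloor n/2\rfloor\, t_a$, combined with the observation that $\lfloor n/2\rfloor\equiv\tfrac{n(n-1)}{2}\pmod 2$ while $t_a$ is $2$-torsion by Corollary \ref{twotimesta}, so the two coefficients of $t_a$ agree in $\widehat{\mathrm{W}}(k)$.

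\emph{Uniqueness.} Suppose $b_\bullet$ is another power structure on $\widehat{\mathrm{W}}(k)$ satisfying $b_n(\langle a\rangle)=\langle a^n\rangle+\tfrac{n(n-1)}{2}t_a$ for all $a\in k^\times$ and all $n$; then $b_n$ and $a_n$ agree on every symbol $\langle a\rangle$. Applying Lemma \ref{subgroup} with $R=R'=\widehat{\mathrm{W}}(k)$ and $\varphi=\mathrm{Id}$, the set of $q\in\widehat{\mathrm{W}}(k)$ with $a_n(q)=b_n(q)$ for all $n$ is an additive subgroup; since $\widehat{\mathrm{W}}(k)$ is generated as an abelian group by the symbols $\langle a\rangle$, this subgroup is everything, whence $b_\bullet=a_\bullet$.

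\emph{The ``moreover'' clause.} Suppose a power structure $c_\bullet$ on $\widehat{\mathrm{W}}(k)$ exists for which $\chi^{mot}: K_0(\mathrm{Var}_k)\to\widehat{\mathrm{W}}(k)$ respects the power structures (in positive characteristic, read this with $\mathrm{Tr}: K_0(\Et_k)\to\widehat{\mathrm{W}}(k)$ in place of $\chi^{mot}$). The reasoning leading to Corollary \ref{ansdefn} applies verbatim to $c_\bullet$: Lemma \ref{2power} gives $c_n(\langle 2\rangle)=\langle 2^n\rangle$, and feeding in the values of $\chi^{mot}(P^{(n)}_{a/2})$ computed in Corollary \ref{quadcalc} forces $c_n(\langle a\rangle)=\langle a^n\rangle+\lfloor n/2\rfloor\, t_a=\langle a^n\rangle+\tfrac{n(n-1)}{2}t_a$. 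By the uniqueness just proved, $c_\bullet=a_\bullet$. (Alternatively, Corollary \ref{uniquepstruc} --- or Corollary \ref{surj} together with the surjectivity of Lemma \ref{surjK0} --- already shows that at most one such $c_\bullet$ exists, which also suffices.) Since the genuinely substantive inputs are Corollary \ref{welldefined} (resting on Lemmas \ref{compute} and \ref{sumts}) and the forced-value computation of Corollary \ref{ansdefn}, the present statement is a routine assembly; I do not expect a real obstacle internal to this proof, only the need to keep straight the identity $\lfloor n/2\rfloor\, t_a=\tfrac{n(n-1)}{2}t_a$ (where $2$-torsion of $t_a$ is used) and to phrase the final clause so that it covers both $\chi^{mot}$ in characteristic $0$ and $\mathrm{Tr}$ in positive characteristic.
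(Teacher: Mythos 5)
Your proof is correct and follows essentially the same route as the paper's: existence comes from Corollaries \ref{welldefined} and \ref{finalpstruc} together with the observation that $\lfloor n/2\rfloor \equiv \frac{n(n-1)}{2} \pmod 2$ (so the two formulas agree because $t_a$ is $2$-torsion by Corollary \ref{twotimesta}); uniqueness comes from the fact that the $a_n$ are determined on the generators $\langle a\rangle$ (your invocation of Lemma \ref{subgroup} with $\varphi=\mathrm{Id}$ is a cleaner justification of what the paper dismisses as ``immediate''); and the ``moreover'' clause follows from Corollary \ref{ansdefn} or \ref{uniquepstruc} exactly as the paper argues.
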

\begin{proof}
Uniqueness is immediate, since defining the $a_n$ functions on generators allows us to obtain them for any element. Corollary $\ref{finalpstruc}$ ensures they are well defined, and Corollary $\ref{ansdefn}$ guarantees they satisfy the second condition. Moreover we may write the functions in this form, since $t_\alpha$ is $2$-torsion, so $\frac{n(n-1)}{2} t_\alpha =0$ if $n \equiv 0,1 \pmod{4}$, and $t_\alpha$ if $n \equiv 2,3 \pmod{4}$.
\end{proof}

\subsection{Properties of this power structure}\label{properties}
In this subsection we prove results about the power structure determined by the $a_n$ functions. Since $\mathrm{char}(k) \neq 2$, identify $H^1(k, \Z/2\Z) \cong (k^\times)/(k^\times)^2$. For $\alpha \in k^\times$, write $[\alpha]$ to mean the associated class in $H^1(k, \Z/2\Z)$.

\begin{lemma}\label{cup0}
Let $\alpha,\beta \in (k^\times)/(k^\times)^2 \cong H^1(k, \Z/2\Z)$. Then the cup product $[\alpha] \cup [\beta] = 0$ if and only if there exists a solution to the equation $\alpha x^2 + \beta y^2 = 1$.
\end{lemma}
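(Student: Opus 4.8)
The plan is to route both sides of the claimed equivalence through the theory of quaternion algebras and conics. The starting observation is the classical dictionary (all of which can be found in \cite{La}): under the identification $H^2(k,\Z/2\Z)\cong \mathrm{Br}(k)[2]$ (valid since $\mathrm{char}(k)\neq 2$), the cup product $[a]\cup[b]$ corresponds to the Brauer class of the quaternion algebra $(a,b)_k$; moreover $(a,b)_k$ is split if and only if the conic $C_{a,b}\colon ax^2+by^2=z^2$ has a $k$-rational point, equivalently if and only if the ternary form $\langle a,b,-1\rangle$ (equivalently the $2$-fold Pfister form $\langle 1,-a,-b,ab\rangle$) is isotropic. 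So $[a]\cup[b]=0$ is equivalent to the existence of a nontrivial triple $(x_0,y_0,z_0)\in k^3$ with $ax_0^2+by_0^2=z_0^2$.

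It therefore suffices to prove that $C_{a,b}$ has a $k$-point if and only if $ax^2+by^2=1$ has a solution $(x,y)\in k^2$. The implication $(\Leftarrow)$ is immediate by taking $z=1$. For $(\Rightarrow)$, pick a nontrivial solution $(x_0,y_0,z_0)$. If $z_0\neq 0$, divide through by $z_0^2$. If $z_0=0$, then $(x_0,y_0)\neq(0,0)$; in fact $y_0\neq 0$, since $y_0=0$ would force $ax_0^2=0$ hence $x_0=0$. Thus $b=-a(x_0/y_0)^2$, so, writing $d=x_0/y_0\in k^\times$, we have $b=-ad^2$ and hence $ax^2+by^2=a(x-dy)(x+dy)$. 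Since $\mathrm{char}(k)\neq 2$, the linear system $x-dy=1$, $x+dy=a^{-1}$ has a solution, and for this $(x,y)$ we get $ax^2+by^2=1$.

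Combining the two steps proves the lemma. The only non-formal point is the degenerate case $z_0=0$ in the second step, and I would emphasize that the standing hypothesis $\mathrm{char}(k)\neq 2$ is precisely what lets us solve the final linear system (it is also what validates the quadratic-form manipulations and the Kummer-sequence identification above). As an alternative to invoking conics, one can argue entirely within quadratic form theory: $ax^2+by^2=1$ is solvable iff $\langle a,b\rangle$ represents $1$ iff $\langle a,b,-1\rangle$ is isotropic iff the $2$-fold Pfister form $\langle 1,-a,-b,ab\rangle$ is hyperbolic (using that an isotropic Pfister form is hyperbolic), and this last condition is equivalent to $[a]\cup[b]=0$ by the same Brauer-theoretic correspondence; but the conic formulation keeps the edge-case bookkeeping most transparent.
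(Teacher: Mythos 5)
Your proposal is correct and follows essentially the same route as the paper: both identify $[a]\cup[b]$ with the class of the quaternion algebra $(a,b)_k$ and then reduce to the criterion that $(a,b)_k$ is split if and only if the associated conic has a $k$-point, the paper citing \cite{SCT}, Prop.\ 1.18 directly for the affine form $ax^2+by^2=1$. The only difference is that you pass through the projective conic $ax^2+by^2=z^2$ and then explicitly dispose of the degenerate case $z_0=0$ (correctly, using $\mathrm{char}(k)\neq 2$ to solve the linear system), a step the paper subsumes into its citation.
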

\begin{proof}
This is a standard fact in Galois cohomology: consider the quaternion algebra $Q(a,b)/k$, which corresponds to the cohomology class $[\alpha] \cup [\beta]$ by page 25 of \cite{SCT}, and so $[\alpha] \cup [\beta] = 0$ if and only if $Q(\alpha,\beta)$ is split. Proposition 1.18 of loc. cit. then tells us that $Q(\alpha ,\beta)$ is split if and only if the conic defined by $\alpha x^2 + \beta y^2 =1$ has a rational point, as required. 
\end{proof}

\begin{lemma}\label{ansnotsym}
For $\alpha \in k^\times$, $a_n(\langle \alpha \rangle) = \langle \alpha^n \rangle$ for all $n$ if and only if $[2] \cup [\alpha] = 0 \in H^2(k, \Z/2\Z).$
\end{lemma}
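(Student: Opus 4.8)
The plan is to reduce the statement to the vanishing of the element $t_a \in \widehat{\mathrm{W}}(k)$ and then to identify $t_a$ with (a multiple or representative of) the cup product $[2] \cup [a]$. By Corollary \ref{maincor1}, $a_n(\langle a \rangle) = \langle a^n \rangle + \frac{n(n-1)}{2} t_a$, and since $t_a$ is $2$-torsion (Corollary \ref{twotimesta}), the coefficient $\frac{n(n-1)}{2}$ is odd precisely when $n \equiv 2, 3 \pmod 4$. Hence ``$a_n(\langle a \rangle) = \langle a^n \rangle$ for all $n$'' is equivalent to ``$t_a = 0$ in $\widehat{\mathrm{W}}(k)$'' — for the forward direction just take $n = 2$, and the reverse direction is then immediate. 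So the whole content is: $t_a = 0$ if and only if $[2] \cup [a] = 0$ in $H^2(k, \Z/2\Z)$.

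Next I would unwind $t_a$. Recall $t_a = \langle 2 \rangle + \langle a \rangle - \langle 1 \rangle - \langle 2a \rangle$, which by Lemma \ref{prodts}'s proof equals $(\langle 1 \rangle - \langle 2 \rangle)(\langle 1 \rangle - \langle a \rangle)$ up to sign; more usefully, $\langle 1 \rangle + \langle a \rangle - \langle 2 \rangle - \langle 2a \rangle = \langle\!\langle 2, a \rangle\!\rangle - \mathbb{H}$-type expressions, i.e. $t_a$ is, up to sign and hyperbolic summands, the Pfister form $\langle\!\langle -2, -a\rangle\!\rangle$ or $\langle\!\langle 2, a \rangle\!\rangle$. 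The key classical input is that a $2$-fold Pfister form $\langle\!\langle b, c \rangle\!\rangle$ over $k$ is hyperbolic (equivalently, zero in the Witt ring, equivalently, its class vanishes) if and only if the quaternion algebra $Q(b,c)$ is split, if and only if $[b] \cup [c] = 0 \in H^2(k,\Z/2\Z)$ — exactly the kind of fact already invoked in Lemma \ref{cup0}. So I would: (i) rewrite $-t_a$ (or $t_a$) explicitly as a scalar multiple of a $2$-fold Pfister form modulo hyperbolics, pinning down which Pfister form and verifying the sign bookkeeping in $\widehat{\mathrm{W}}(k)$ rather than $\mathrm{W}(k)$; (ii) argue that $t_a = 0$ in $\widehat{\mathrm{W}}(k)$ iff the corresponding Pfister form is hyperbolic — here one must be slightly careful because $\widehat{\mathrm{W}}(k)$ is the Grothendieck–Witt ring, not the Witt ring, but since $t_a$ has virtual rank $0$, vanishing in $\widehat{\mathrm{W}}(k)$ is equivalent to vanishing in $\mathrm{W}(k)$; (iii) invoke the Pfister/Merkurjev-style correspondence (or just the quaternion algebra computation, as in Lemma \ref{cup0}) to conclude the Pfister form is hyperbolic iff $[2] \cup [a] = 0$.

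Concretely, the cleanest route is probably to show directly that $t_a = 0 \iff$ the conic $2x^2 + ax^2 = 1$... more precisely $2x^2 + ay^2 = z^2$ has a rational point, and then quote Lemma \ref{cup0} (with the roles of $a$ and $b$ played by $2$ and $a$) to get $[2]\cup[a] = 0$. For the equivalence of $t_a = 0$ with isotropy of that conic, I would note $\langle 1 \rangle + \langle a \rangle = \langle 2 \rangle + \langle 2a \rangle$ in $\widehat{\mathrm{W}}(k)$ is equivalent, by Witt's theory of binary forms, to $\langle 1, a \rangle \cong \langle 2, 2a \rangle$, and two binary forms of the same determinant ($a$ and $4a$, same class mod squares) are isomorphic iff they represent a common value iff the form $\langle 1, a, -2, -2a\rangle$ is isotropic, which (dividing by $2$) is iff $\langle 2, 2a, -1, -a \rangle$ is isotropic, i.e. iff $\langle\!\langle 2, a \rangle\!\rangle$ is isotropic, hence hyperbolic since it is a Pfister form.

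The main obstacle I anticipate is the sign/normalisation bookkeeping: making sure the precise Pfister form one extracts from $t_a$ is $\langle\!\langle 2, a \rangle\!\rangle$ (or $\langle\!\langle -2, -a \rangle\!\rangle$, etc.) and that its triviality matches $[2]\cup[a] = 0$ rather than, say, $[-2]\cup[a]$ or $[2]\cup[-a]$ — a discrepancy here would change the statement. This is genuinely a finite check but easy to get wrong. A secondary, minor point is being careful that we are working in $\widehat{\mathrm{W}}(k)$: one should explicitly observe that the rank-$0$ element $t_a$ maps isomorphically onto its image in $\mathrm{W}(k)$ under $\widehat{\mathrm{W}}(k) \to \mathrm{W}(k)$ on the relevant subgroup (the fundamental ideal is where rank-$0$ elements live and $\widehat{\mathrm{W}}(k) \to \mathrm{W}(k)$ is injective on it, or at least on the $2$-torsion we care about), so that ``$t_a = 0$'' can be detected in the Witt ring where the Pfister form machinery is cleanest.
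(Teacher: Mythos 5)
Your reduction to ``$t_a = 0$'' is exactly right and matches the paper, and your approach to the second half is correct but uses heavier machinery than the paper does. The paper proceeds hands-on: it observes that $t_a = 0$ amounts to the isomorphism $\langle 2, a \rangle \cong \langle 1, 2a \rangle$, constructs the orthogonal basis explicitly from a solution of $2x^2 + ay^2 = 1$ (given a vector $xv+yw$ with $q(xv+yw)=1$, the orthogonal complement is spanned by $\pm(ay\,v - 2x\,w)$, which has $q$-value $2a$, so a solution of $2x^2 + ay^2 = 1$ produces the isomorphism and conversely), and then cites Lemma \ref{cup0}. Your route instead packages $\langle 1, a, -2, -2a \rangle$ as a (scaled) $2$-fold Pfister form and invokes the general theory (isotropic $\Rightarrow$ hyperbolic for Pfister forms, the correspondence between $2$-fold Pfister forms and quaternion algebras / symbols in $H^2$). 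Both are valid; the paper's version is more elementary and self-contained given that Lemma \ref{cup0} is already in hand, whereas yours makes the structural fact (``$t_a$ is a Pfister form up to scaling and hyperbolic summands'') visible. Regarding the sign bookkeeping you flagged: it is genuinely harmless here, because whichever of $[2]\cup[a]$, $[-2]\cup[a]$, $[2]\cup[-a]$, $[-2]\cup[-a]$ you land on, they all coincide — the differences are multiples of $[-1]\cup[2]$, and the quaternion algebra $(-1,2)$ is split over every field (the conic $-x^2 + 2y^2 = z^2$ has the rational point $(1,1,1)$), so $[-1]\cup[2]=0$ identically. Finally, your remark about passing between $\widehat{\mathrm{W}}(k)$ and $\mathrm{W}(k)$ is more caution than is needed: in $\widehat{\mathrm{W}}(k)$, two elements represented by actual forms of equal rank are equal iff the forms are isomorphic, which is exactly the statement $\langle 2, a \rangle \cong \langle 1, 2a \rangle$ you want; no detour through the Witt ring is required.
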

\begin{proof}
Note that $a_n(\langle \alpha \rangle) = \langle a^n \rangle$ for all $n$ if and only if $t_\alpha=0$, i.e., if and only if
$$
\langle 2 \rangle + \langle \alpha \rangle = \langle 1 \rangle + \langle 2\alpha \rangle \in \widehat{\mathrm{W}}(k).
$$
The left hand side is the isomorphism class of the quadratic form $q$ on a vector space with orthogonal basis elements $v,w$ such that $q( \lambda v + \mu w) = 2\lambda^2 + a\mu^2$. The above equality therefore holds if there exist $x,y, x', y' \in k$ such that $xv + yw$ and $x'v + y'w$ are orthogonal, and
\begin{align*}
2x^2 + ay^2 &= 1\\
2(x')^2 + a(y')^2 &= 2\alpha.
\end{align*}
These $x,y$ exist satisfying the first equality if and only if $[2] \cup [\alpha] = 0 \in H^2(k, \Z/2\Z)$ by the previous lemma. Taking $x' = ay$ and $y' = 2x$ then gives the second equality, so the only obstruction to this isomorphism is the class $[2] \cup [\alpha] \in H^2(k, \Z/2\Z)$.
\end{proof}
\begin{cor}\label{classicalpstruc}
The power structure given by the functions $a_n$ on $\mathrm{\widehat{\mathrm{W}}}(k)$ agrees with the power structure defined by McGarraghy's non-factorial symmetric power, as in Proposition 4.23 of \cite{Mc} if and only if the map $[2] \cup -: H^1(k, \Z/2\Z) \to H^2(k, \Z/2\Z)$ is the zero map. 
\end{cor}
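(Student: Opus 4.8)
The plan is to compare the two power structures by comparing the functions they assign to the generators $\langle a \rangle$, and then invoking Corollary~\ref{maincor1} (or rather the uniqueness statement implicit in it, via Corollary~\ref{powerfns}) to conclude that agreement on generators forces agreement everywhere. So the first step is to recall from Proposition 4.23 of \cite{Mc} the explicit formula for McGarraghy's non-factorial symmetric power applied to a one-dimensional form $\langle a \rangle$. I expect this to be $S^n(\langle a \rangle) = \langle a^n \rangle$ (or something manifestly equal to $\langle a^n \rangle$ in $\widehat{\mathrm{W}}(k)$ using $\langle a \rangle^2 = \langle 1 \rangle$), since the symmetric power of a rank-one form over a field is again rank one and is computed by the ordinary multiplicative structure; the point of ``non-factorial'' is precisely that one does \emph{not} introduce the correction terms appearing in our $a_n$.

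Second, I would write both $a_n(\langle a \rangle)$ and McGarraghy's $S^n(\langle a\rangle)$ side by side. By Corollary~\ref{maincor1} we have $a_n(\langle a \rangle) = \langle a^n \rangle + \frac{n(n-1)}{2} t_a$, which by $2$-torsion of $t_a$ equals $\langle a^n \rangle$ when $n \equiv 0,1 \pmod 4$ and $\langle a^n \rangle + t_a$ when $n \equiv 2,3 \pmod 4$. Comparing with $S^n(\langle a \rangle) = \langle a^n \rangle$, the two agree on the generator $\langle a \rangle$ for all $n$ if and only if $t_a = 0$ (taking $n = 2$ shows necessity; $t_a = 0$ shows sufficiency for all $n$). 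By Lemma~\ref{ansnotsym}, $t_a = 0$ if and only if $[2] \cup [a] = 0 \in H^2(k, \Z/2\Z)$. So the two power structures agree on \emph{all} generators $\langle a \rangle$, $a \in k^\times$, precisely when $[2] \cup [a] = 0$ for every $a$, i.e.\ when $[2] \cup - \colon H^1(k,\Z/2\Z) \to H^2(k,\Z/2\Z)$ is the zero map.

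Third, I need the reduction ``agreement on generators $\Rightarrow$ agreement of power structures.'' Both power structures are determined by their functions $a_i$ (resp.\ $S^i$) via Corollary~\ref{powerfns}, and those functions are determined by their values on a generating set together with the additivity relation $c_n(q+q') = \sum_{i=0}^n c_i(q)c_{n-i}(q')$; since the $\langle a \rangle$ generate $\widehat{\mathrm{W}}(k)$ as a group (Lemma~\ref{subgroup} packages exactly this closure property), agreement on the $\langle a \rangle$ for all $n$ propagates to all of $\widehat{\mathrm{W}}(k)$, hence the two power structures coincide. Conversely, if the two power structures coincide then in particular $a_2(\langle a \rangle) = S^2(\langle a \rangle)$ for every $a$, forcing $t_a = 0$ for all $a$ and hence $[2]\cup - = 0$.

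The main obstacle I anticipate is purely bookkeeping: extracting from \cite{Mc} the precise statement that the non-factorial symmetric power defines a power structure in the sense of Corollary~\ref{powerfns} and that it sends $\langle a \rangle$ to $\langle a^n \rangle$ with no correction term. If McGarraghy's construction is phrased in terms of $\lambda$-ring or pre-$\lambda$-ring operations rather than power-structure language, a small translation lemma is needed to match conventions (e.g.\ relating his $S^n$ to the coefficients of $(1-t)^{-q}$); once that dictionary is in place, the cohomological criterion falls out immediately from Lemma~\ref{ansnotsym}. No hard computation is involved beyond what is already in the excerpt.
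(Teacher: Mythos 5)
Your proposal is correct and follows essentially the same route as the paper: both reduce to comparing the two power structures on the generators $\langle a\rangle$, use $S^n(\langle a\rangle)=\langle a^n\rangle$, and then invoke Lemma~\ref{ansnotsym} to translate $t_a=0$ for all $a$ into the vanishing of $[2]\cup-$. The only cosmetic differences are that you spell out the generator reduction and the $n=2$ necessity step more explicitly, and you cite Proposition 4.23 of \cite{Mc} where the paper actually extracts the formula from Definition 4.1 of \cite{Mc}.
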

\begin{proof}
Let $S^n$ denote the functions from Definition 4.1 of \cite{Mc} defining the non factorial symmetric power. We see that $S^n(\langle \alpha \rangle) = \langle \alpha^n \rangle$, and a power structure on $\widehat{\mathrm{W}}(k)$ is determined uniquely by its value on elements of the form $\langle \alpha \rangle$. These power structures are therefore equal if for all $\alpha \in k^\times$, $a_n(\langle \alpha \rangle) = \langle \alpha^n \rangle$, so this follows by the previous lemma.
\end{proof}
Note that there is a trivial case for which the map above is always $0$.
\begin{lemma}
If $2 \in (k^\times)^2$, then $[2] \cup -: H^1(k, \Z/2\Z) \to H^2(k, \Z/2\Z)$ is the zero map.
\end{lemma}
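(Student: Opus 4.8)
The plan is to observe that the hypothesis $2 \in (k^\times)^2$ simply says that the class $[2]$ is trivial in $H^1(k,\Z/2\Z)$, after which the conclusion is immediate from bilinearity of the cup product. Concretely, first I would recall the Kummer isomorphism $H^1(k,\Z/2\Z) \cong (k^\times)/(k^\times)^2$ already fixed in this subsection, under which $[2]$ corresponds to the coset of $2$ modulo squares. If $2 = c^2$ for some $c \in k^\times$, then this coset is the identity, so $[2] = 0 \in H^1(k,\Z/2\Z)$. Since $\cup\colon H^1(k,\Z/2\Z) \times H^1(k,\Z/2\Z) \to H^2(k,\Z/2\Z)$ is $\Z/2\Z$-bilinear, we get $[2] \cup x = 0 \cup x = 0$ for every $x \in H^1(k,\Z/2\Z)$, which is exactly the assertion that $[2] \cup -$ is the zero map.

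Alternatively, and perhaps more in the spirit of the surrounding lemmas, I would deduce this from Lemma \ref{cup0}: every class in $H^1(k,\Z/2\Z)$ has the form $[a]$ for some $a \in k^\times$, and $[2]\cup[a] = 0$ precisely when $2x^2 + ay^2 = 1$ has a solution over $k$; taking $y = 0$ and $x = 1/c$ (where $c^2 = 2$) exhibits such a solution for every $a$, so $[2]\cup[a] = 0$ for all $a$.

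I do not expect any genuine obstacle here: the statement is essentially a triviality flagged for the reader's convenience, recording the evident degenerate case of the map studied in Corollary \ref{classicalpstruc} and Lemma \ref{ansnotsym}. The only point requiring any care is the bookkeeping of the identification $H^1(k,\Z/2\Z) \cong (k^\times)/(k^\times)^2$, which has already been set up, so the write-up will be just a couple of lines.
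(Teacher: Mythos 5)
Your first argument is exactly the paper's proof: since $2$ is a square, $[2]$ is the trivial class in $H^1(k,\Z/2\Z)\cong (k^\times)/(k^\times)^2$, and the cup product with the zero class vanishes. The alternative via Lemma \ref{cup0} is also fine, but the paper uses the direct triviality observation, as you do.
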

\begin{proof}
If $2$ is a square, then $[2]$ is the trivial class in $H^1(k, \Z/2\Z) \cong (k^\times)/(k^\times)^2$, and so the map $[2] \cup -$ is clearly the zero map. 
\end{proof}
In some natural cases, we can explicitly say when this map doesn't vanish.
\begin{lemma}
Let $k$ be a $p$-adic field with uniformiser $\pi$, ring of integers $\mathcal{O}$, and residue field $\F$. Suppose $\mathrm{char}(\F) \neq 2$, and that $2$ is not a square in $\F$. Then $[2] \cup [\pi] \neq 0 \in H^2(k, \Z/2\Z)$. 
\end{lemma}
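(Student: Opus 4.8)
The plan is to use Lemma \ref{cup0} to convert the non-vanishing of $[2]\cup[\pi]$ into the statement that the conic $2x^2 + \pi y^2 = 1$ has no $k$-rational point, and then to rule out such points by an elementary valuation argument. Write $v$ for the normalised valuation on $k$, so that $v(\pi) = 1$ and $v$ vanishes on $\mathcal{O}^\times$. The hypothesis $\mathrm{char}(\F)\neq 2$ enters only through the remark that $2 \in \mathcal{O}^\times$, hence $v(2)=0$; and the hypothesis that $2$ is a non-square in $\F$ will be used at the very end.

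Suppose for contradiction that $2x^2 + \pi y^2 = 1$ for some $x,y\in k$. The key observation is a parity mismatch: $v(2x^2)=2v(x)$ is even, whereas $v(\pi y^2)=1+2v(y)$ is odd, so these two valuations are distinct and therefore $v(2x^2+\pi y^2)=\min\bigl(2v(x),\,1+2v(y)\bigr)$. The left-hand side equals $v(1)=0$; since $0$ is even, the minimum must be attained by the even term, forcing $2v(x)=0$, i.e. $x\in\mathcal{O}^\times$, and then $1+2v(y)$, being odd and $\geq 0$, is in fact $\geq 1$, i.e. $y\in\mathcal{O}$.

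With $x$ a unit and $y$ integral, I would reduce the equation modulo $\pi$: the term $\pi y^2$ dies, leaving $\overline{2}\,\overline{x}^{\,2}=\overline{1}$ in $\F$, so $\overline{2}=(\overline{x}^{\,-1})^2$ is a square in $\F^\times$. This contradicts the hypothesis that $2$ is a non-square in the residue field. Hence the conic $2x^2+\pi y^2=1$ has no point over $k$, and Lemma \ref{cup0} gives $[2]\cup[\pi]\neq 0$ in $H^2(k,\Z/2\Z)$.

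I do not anticipate a genuine obstacle: the argument is just careful bookkeeping with valuations and parities, the only real input being $v(2)=0$. (Equivalently, one may simply invoke the standard formula for the tame Hilbert symbol over a local field of odd residue characteristic, $(u,\pi)=+1 \iff \overline{u}\in(\F^\times)^2$, applied to $u=2$; the computation above is precisely an unwinding of that formula through Lemma \ref{cup0}.)
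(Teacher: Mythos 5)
Your proof is correct and takes essentially the same route as the paper: reduce via Lemma \ref{cup0} to showing $2x^2+\pi y^2=1$ has no $k$-point, then rule this out by a valuation argument and reduction mod $\pi$. Your parity observation ($v(2x^2)$ even, $v(\pi y^2)$ odd) is a cleaner way to force $x\in\mathcal O^\times$ and $y\in\mathcal O$ than the paper's explicit $x=u\pi^a$, $y=v\pi^b$ case-split, and it avoids a small slip in the paper's statement of the valuation of $\pi^{-2a}-2u^2$ (which should be $\min(0,-2a)$, not $\max$), but the underlying argument is the same.
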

\begin{proof}
By Lemma $\ref{cup0}$, we only need to show that $2x^2 + \pi y^2 = 1$ has no solutions in $k$. Suppose it does, seeking a contradiction. Note that we cannot have $x,y=0$, as neither $\pi$ or $2$ are squares in $k$. Let $x = u \pi^a$ and $y= v \pi^b$, where $u,v \in \mathcal{O}^\times$. We then see that $\pi^{1-2\alpha} y^2 = \pi^{-2\alpha} - 2u^2$. The valuation of the left hand side is then $1+2b-2\alpha$. If $a \neq 0$, the valuation of the right hand side is $\mathrm{max}(0, -2\alpha)$, which is a contradiction. If $a=0$, we see $1-2u^2$ is a unit, since $2$ is not a square in $\F$. This is a contradiction, which gives the result.
\end{proof}
\begin{lemma}\label{cupprod}
Let $k$ be a number field such that $\sqrt{2} \not\in k$. Then there exists $\alpha \in k^\times$ such that the cup product $[2] \cup [\alpha]$ is non zero in $H^2(k, \Z/2\Z)$.
\end{lemma}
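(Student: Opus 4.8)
The plan is to reduce to the $p$-adic computation of the preceding lemma by choosing a single well-adapted finite place of $k$. Since $\sqrt 2 \notin k$, the field $k(\sqrt 2)$ is a nontrivial quadratic extension of $k$, so by the Chebotarev density theorem there are infinitely many primes $\mathfrak p$ of $k$ that are inert in $k(\sqrt 2)/k$. Discarding the finitely many primes dividing $2$, I would fix one such $\mathfrak p$ with $\mathfrak p \nmid 2$; then $\mathrm{char}(\mathcal O/\mathfrak p) \neq 2$, and since $\mathfrak p$ is unramified and inert in $k(\sqrt 2)/k$, the element $2$ is a non-square in the residue field $\F := \mathcal O/\mathfrak p$.

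Next I would produce a global element behaving like a uniformiser at $\mathfrak p$. Because the ideal class group of $k$ is finite, there is an integral ideal $\mathfrak a$ coprime to $\mathfrak p$ lying in the inverse ideal class of $[\mathfrak p]$; then $\mathfrak p\mathfrak a$ is principal, say $\mathfrak p \mathfrak a = (\alpha)$ with $\alpha \in k^\times$, and $v_{\mathfrak p}(\alpha) = 1$. Thus in the completion $k_{\mathfrak p}$ the element $\alpha$ is a uniformiser, $2$ is a non-square unit, and the residue characteristic is odd, so the previous lemma applies verbatim and gives $[2] \cup [\alpha] \neq 0$ in $H^2(k_{\mathfrak p}, \Z/2\Z)$.

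To conclude, I would invoke functoriality: the inclusion $k \hookrightarrow k_{\mathfrak p}$ induces restriction homomorphisms $H^i(k, \Z/2\Z) \to H^i(k_{\mathfrak p}, \Z/2\Z)$ compatible with cup products, and, under the identifications of $H^1(k, \Z/2\Z)$ with $k^\times/(k^\times)^2$ and of $H^1(k_{\mathfrak p}, \Z/2\Z)$ with $k_{\mathfrak p}^\times/(k_{\mathfrak p}^\times)^2$, the classes $[2]$ and $[\alpha]$ restrict to the corresponding local classes. Hence the image of $[2] \cup [\alpha] \in H^2(k, \Z/2\Z)$ in $H^2(k_{\mathfrak p}, \Z/2\Z)$ is the nonzero class just computed, so $[2] \cup [\alpha] \neq 0$ globally, which is the claim.

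The only genuinely nontrivial ingredient is the existence of an inert prime $\mathfrak p \nmid 2$, and this is the step I expect to need the most care if one wants it self-contained: the Chebotarev density theorem settles it immediately (the inert primes form a set of density $\tfrac12$), but one can also argue that a nontrivial quadratic extension cannot split at all but finitely many primes, which already forces the existence of a prime $\mathfrak p\nmid 2$ inert in $k(\sqrt 2)/k$, equivalently with $2$ a non-square modulo $\mathfrak p$. Everything else — surjectivity of $v_{\mathfrak p}\colon k^\times \to \Z$ coming from finiteness of the class group, and functoriality of cup products under restriction to a completion — is standard.
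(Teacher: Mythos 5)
Your proof is correct and follows the same route as the paper: Chebotarev to find a prime $\mathfrak p \nmid 2$ inert in $k(\sqrt 2)/k$, the preceding $p$-adic lemma at $k_{\mathfrak p}$, and naturality of cup products to pull the nonvanishing back to $H^2(k,\Z/2\Z)$. The class-group detour to manufacture a global $\alpha$ with $v_{\mathfrak p}(\alpha)=1$ is more than needed — any element of $\mathfrak p \setminus \mathfrak p^2$ already has valuation $1$ at $\mathfrak p$, which is what the paper implicitly takes as its uniformiser.
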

\begin{proof}
By assumption $k(\sqrt{2})/k$ is a proper quadratic extension. By Chebotarev's density theorem, there exist infinitely many primes $\mathcal{O}_k$ which are inert in $k(\sqrt{2})$. Let $\mathfrak{p}$ be one such prime that does not lie above $2$.  Let $\pi$ be a uniformiser for $\mathfrak{p}$ and let $\F$ be the residue field of $k_{\mathfrak{p}}$. Since $\mathfrak{p}$ is inert in $k(\sqrt{2})$, $2$ is not a square in $\F$.  By the previous lemma $[2] \cup [\pi]$ is non zero in $H^2(k_{\mathfrak{p}}, \Z/2\Z)$, so $[2] \cup [\pi] \neq 0 \in H^2(k, \Z/2\Z)$ by naturality of the cup product.
\end{proof}
\begin{cor}\label{2exoticpower}
Let $k$ be a number field such that $\sqrt{2} \not\in k$. Then the power structure on $\widehat{\mathrm{W}}(k)$ defined by the $a_n$ functions does not agree with the power structure on $\widehat{\mathrm{W}}(k)$ defined by either the factorial or non-factorial symmetric powers from \cite{Mc}.
\end{cor}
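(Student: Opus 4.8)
The plan is to reduce the statement to the non-vanishing of the cup product map $[2]\cup-$, which is precisely the content of Lemma~\ref{cupprod}, and then to invoke Corollary~\ref{classicalpstruc} for the non-factorial symmetric power together with a parallel (in fact easier) observation for the factorial one.

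First I would use Lemma~\ref{cupprod} to fix an element $\alpha\in k^\times$ with $[2]\cup[\alpha]\neq 0$ in $H^2(k,\Z/2\Z)$; this is available exactly because $\sqrt2\notin k$. By Lemma~\ref{ansnotsym} this means $t_\alpha\neq 0$ in $\widehat{\mathrm{W}}(k)$, and hence, by Corollary~\ref{maincor1}, $a_n(\langle\alpha\rangle)=\langle\alpha^n\rangle+\tfrac{n(n-1)}2\,t_\alpha\neq\langle\alpha^n\rangle$ whenever $n\equiv 2,3\pmod4$. So the $a_\bullet$-power structure is genuinely exotic already at the single element $\langle\alpha\rangle$.

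For the non-factorial symmetric power this is all that is needed: Lemma~\ref{cupprod} says the map $[2]\cup-\colon H^1(k,\Z/2\Z)\to H^2(k,\Z/2\Z)$ is not the zero map, so Corollary~\ref{classicalpstruc} immediately yields that the power structure defined by the $a_n$ disagrees with McGarraghy's non-factorial symmetric power. For the factorial symmetric power I would argue directly: recall from \cite{Mc} that on a rank-one form the factorial symmetric power is again given by $\langle a\rangle\mapsto\langle a^n\rangle$ (just as for the non-factorial one, a line carries no nontrivial permutations, so the $n!$-normalisation is invisible in rank one). Then for the $\alpha$ chosen above and any $n\equiv 2,3\pmod4$ we have $a_n(\langle\alpha\rangle)=\langle\alpha^n\rangle+t_\alpha\neq\langle\alpha^n\rangle$, so the two power structures already fail to agree at $\langle\alpha\rangle$; no comparison on general elements is required, since exhibiting one element of disagreement suffices.

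The main obstacle is not in the logic, which is short, but in the bookkeeping: one must check against the definitions of \cite{Mc} that the factorial symmetric power really does restrict to $\langle a\rangle\mapsto\langle a^n\rangle$ on rank-one forms (and, for the statement to be meaningful, that it fits the power-structure formalism of Section~2). Once that is confirmed, the corollary follows formally by combining Lemma~\ref{cupprod}, Lemma~\ref{ansnotsym} and Corollary~\ref{classicalpstruc}.
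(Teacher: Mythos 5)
Your argument is correct and is essentially the approach the paper takes: the paper's own proof is the one-line remark that combining Lemma~\ref{cupprod} with Lemma~\ref{ansnotsym} gives the result, and you have simply unwound that combination. The useful thing you add is an explicit treatment of the factorial symmetric power, which the paper's terse proof leaves implicit (and Corollary~\ref{classicalpstruc} only speaks to the non-factorial case). You are right to flag that the one point needing verification against \cite{Mc} is the value of the factorial symmetric power on a rank-one form; note, however, that this can be sidestepped entirely. Whatever McGarraghy's factorial symmetric power produces on $\langle\alpha\rangle$, it produces a class represented by a genuine rank-one quadratic form $\langle\gamma\rangle$; and a discriminant computation on the rank-three identity $\langle\alpha^n\rangle + \langle2\rangle+\langle\alpha\rangle = \langle\gamma\rangle + \langle1\rangle + \langle2\alpha\rangle$ forces $\gamma\equiv\alpha^n$ modulo squares and hence $t_\alpha=0$, contradicting the choice of $\alpha$. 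This gives the same conclusion without appealing to the exact normalisation, and is essentially what Remark~\ref{surprising} is gesturing at when it says $a_n(\langle\alpha\rangle)$ is not represented by any quadratic form when $[\alpha]\cup[2]\neq0$.
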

\begin{proof}
Applying Lemma $\ref{cupprod}$ to Lemma $\ref{ansnotsym}$ gives the result.
\end{proof}
\begin{rem}\label{surprising}
The above corollary may be surprising: an initial guess of the authors was that any power structure on $\widehat{\mathrm{W}}(k)$ that is compatible with the symmetric product on $\widehat{\mathrm{W}}(k)$ would necessarily be some sort of ``symmetric product of quadratic forms", such as either the factorial or non-factorial symmetric power from \cite{Mc}. This is not the case unless the cup product with $[2]$ map $- \cup [2]: H^1(k, \Z/2\Z) \to H^2(k, \Z/2\Z)$ is the zero map, since if $[\alpha] \cup [2] \neq 0$, then there is no quadratic form $q: V \to k$ such that $a_n(\langle \alpha \rangle)=[q] \in \widehat{\mathrm{W}}(k)$.
\end{rem}

\begin{rem}\label{Spheres}
It should be noted that the original definition of $\chi^{mot}$ comes from a construction in motivic homotopy theory (see \cite{Le}). In the setting of motivic homotopy theory, $\chi^{mot}$ takes values in the endomorphism ring of the motivic sphere spectrum and we can identify this ring with $\widehat{\mathrm{W}}(k)$ by Morel's theorem (Theorem 6.4.1 of \cite{Mo}) when $k$ is perfect, and also in the general case by .

 We can see the computational need for the term $t_\alpha$ already in Corollary $\ref{ansdefn}$. A closer study of the identification of $\widehat{\mathrm{W}}(k)$ with the endomorphisms of the motivic sphere spectrum may give a more philosophical reason for the appearance of the term $t_\alpha$, though this is beyond the scope of this paper.
\end{rem}

\section{Compatibility in dimension $0$}
Fix $\mathrm{char}(k) \neq 2$. This section is dedicated to proving the following result.
\begin{thm}
The map $\mathrm{Tr}: K_0(\Et_k) \to \widehat{\mathrm{W}}(k)$ respects the power structures on both sides, where $K_0(\Et_k)$ is given the power structure arising from symmetric powers, and $\widehat{\mathrm{W}}(k)$ is given by the power structure arising from Corollary $\ref{finalpstruc}$.
\end{thm}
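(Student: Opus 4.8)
The strategy follows the outline indicated in the introduction: reduce to generators, then climb from quadratic to biquadratic to multiquadratic to all étale algebras. First, by Lemma \ref{subgroup} the set of $x \in K_0(\Et_k)$ on which $\mathrm{Tr}$ intertwines the two families of power operations is an abelian subgroup, and $K_0(\Et_k)$ is generated as an abelian group by the classes of connected finite étale $k$-algebras (finite separable field extensions). So it suffices to prove, for every finite étale $k$-algebra $A$ and every $n \geq 0$, the identity $\mathrm{Tr}\big((\Spec A)^{(n)}\big) = a_n\big(\mathrm{Tr}(\Spec A)\big)$ in $\widehat{\mathrm{W}}(k)$, where $(\Spec A)^{(n)} = \Spec\big((A^{\otimes n})^{S_n}\big)$ is the $n$-th symmetric power and the $a_n$ are the functions of Corollary \ref{finalpstruc}. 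Both sides are natural in $k$, and since $(X \sqcup Y)^{(n)} = \coprod_{i+j=n} X^{(i)} \times Y^{(j)}$ and $\mathrm{Tr}$ is multiplicative on products, they transform under $A \mapsto A \times A'$ by the power-structure sum rule; hence for each fixed degree $d$ and each fixed $n$ they are two $\widehat{\mathrm{W}}$-valued ``invariants'' of the functor $\Et_d$ of degree-$d$ étale algebras, in the sense of \cite{GMS}.

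The plan is then to first dispatch the quadratic case: for $\Spec A = P_a$ (Definition \ref{pa}) the left-hand side is Corollary \ref{quadcalc}, while $a_n(\langle 2 \rangle + \langle 2a \rangle)$ is Lemma \ref{compute}, and the two closed forms agree after splitting by the parity of $n$ and using $\langle 4a \rangle = \langle a \rangle$, $2\langle 2 \rangle = 2\langle 1 \rangle$, $t_{2a} = t_a$, and $2t_a = 0$ (Corollary \ref{twotimesta}) --- this is essentially the computation of Corollary \ref{ansdefn} run backwards. Next I would treat the biquadratic case $A = B \otimes_k C$ with $B,C$ quadratic étale: then $\Spec A(\kbar)$ is a $\Gal_k$-set of size $4$ with action through a quotient of $(\Z/2)^2$, and, exactly as in the proof of Lemma \ref{quadratic}, one enumerates the $S_n$-orbits of size-$n$ multisets from this set, tracks the residual Galois action, identifies $(\Spec A)^{(n)}$ as a $0$-dimensional $k$-variety, applies $\mathrm{Tr}$, and matches the result with $a_n(\mathrm{Tr}(A))$ computed from Definition \ref{zkdefn} and the $t_a$-relations of Lemmas \ref{prodts}, \ref{tab}, \ref{sumts} and Corollary \ref{ctab}. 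This case is genuinely needed over and above the quadratic one, because $B \otimes_k C$ is a fresh basis vector of $K_0(\Et_k)$, not reachable from quadratics via the subgroup lemma; it is the one laborious but routine bookkeeping step.

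To pass to a multiquadratic field extension $k(\sqrt{a_1}, \dots, \sqrt{a_r})$ --- a twist of the split algebra by a class in $H^1(k, (\Z/2)^r)$, built by iterated tensor product from quadratic pieces --- I would induct on $r$ using the twisting argument adapted from the appendix of \cite{YZ}, which rewrites $\mathrm{Tr}$ of the symmetric powers of a twisted $0$-dimensional variety in terms of the untwisted data, the inductive step being a biquadratic computation carried out over an auxiliary field; together with the subgroup lemma this yields the identity for every multiquadratic étale algebra (each being a product of such fields). Finally I would invoke the detection principle of \cite{GMS}: a $\widehat{\mathrm{W}}$-valued invariant of $\Et_d$ is determined by its restriction to multiquadratic algebras, via injectivity of restriction to the elementary abelian $2$-subgroups of $S_d$ applied degree-by-degree along the $I$-adic filtration of $\widehat{\mathrm{W}}$. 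Applying this to the two invariants from the first paragraph and the agreement established above completes the proof.

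The main obstacle is this last step. Because the two sides are not additive in $A$ but transform by the power-structure sum rule, one cannot simply quote the classical injectivity of invariants of $\Et_d$ on elementary abelian subgroups; the argument of \cite{GMS} must be adapted to $\widehat{\mathrm{W}}$-valued, non-additive functions, and this adaptation is exactly why the full multiquadratic input (not merely the quadratic case) is indispensable. The biquadratic orbit count is the only other point requiring real care, and there the subtlety is entirely in keeping track of the $2$-torsion term $t_a$, equivalently the class $[2] \cup [a] \in H^2(k, \Z/2\Z)$.
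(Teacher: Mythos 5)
Your overall plan mirrors the paper's exactly: verify the identity by hand for quadratic and biquadratic \'etale algebras, extend to multiquadratic ones by the $G$-equivariant twisting machinery of the appendix of \cite{YZ}, and finish by a detection argument from \cite{GMS}. The biquadratic step and the induction via twisting are correctly identified; the paper does the biquadratic orbit count via the Burnside-ring formula of \cite{WebbGSet} rather than by hand, but your direct enumeration is the same computation. One small slip: in the induction over $r$, the inductive step applies Corollary~\ref{inductionquad}, whose proof of the key Theorem~\ref{twistang} uses the biquadratic result over $k$ itself (via surjectivity of $\mathrm{Tr}\colon Q_k \to \widehat{\mathrm{W}}(k)$), not a computation ``over an auxiliary field.''

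The real issue is the final step. You are right that both $A \mapsto a_n(\mathrm{Tr}_A)$ and $A \mapsto \mathrm{Tr}_{A^{(n)}}$ are natural transformations $\Et_d \to \widehat{\mathrm{W}}$, but your worry that ``one cannot simply quote the classical injectivity \dots because the two sides are not additive'' is a red herring: a GMS-invariant (Definition 1.1 of \cite{GMS}) is just a natural transformation between functors on $\mathrm{Fields}_{/k}$; no additivity in the algebra is assumed or needed, and the difference $A \mapsto a_n(\mathrm{Tr}_A) - \mathrm{Tr}_{A^{(n)}}$ is itself such an invariant. What \emph{is} needed is to be in the right target. Theorem~29.1 of \cite{GMS} is stated for Witt-valued, i.e.\ $\mathrm{W}(k)$-valued, invariants, not $\widehat{\mathrm{W}}(k)$-valued ones, and your appeal to ``the $I$-adic filtration of $\widehat{\mathrm{W}}$'' does not by itself bridge this gap. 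The paper's route is simpler and correct: reduce the difference modulo the hyperbolic form to get a $\mathrm{W}(k)$-valued invariant vanishing on multiquadratics, apply Theorem~29.1 to conclude it vanishes in $\mathrm{W}(k)$ for all \'etale $A$, and then lift back to $\widehat{\mathrm{W}}(k)$ by comparing ranks: Lemma~\ref{dimension} shows both sides have the same rank $\binom{n+m-1}{m}$, so the hyperbolic ambiguity is killed. Without this rank comparison the argument does not close, so you should state it explicitly rather than gesturing at the $I$-adic filtration.
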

In order to give an overview of the proof, we first need some notation.
\begin{defn}
Let $X$ be a variety of dimension $0$ over $k$. Then $X=\Spec(A)$ for $A$ some finite dimensional étale algebra over $k$, so write $[A] \in K_0(\Et_k)$ to mean $[\Spec(A)]$.

Let $Q_k \subseteq K_0(\Et_k)$ be the abelian group generated by the classes $[L]$, where either $L=k$ or $L/k$ is a quadratic extension. In particular $Q_k$ is not a ring. Similarly, let $BQ_k \subseteq K_0(\Et_k)$ be the abelian group generated by classes $[L]$, where $L/k$ is either a biquadratic field extension, a quadratic field extension, or $L=k$. Finally, let $MQ_k\subseteq K_0(\Et_k)$ be the abelian group generated by classes of multiquadratic étale algebras. Note that $MQ_k$ is also the subring generated by $Q_k$.
\end{defn}
This theorem is proven in three parts. We first show directly that the theorem holds for elements of $Q_k$ and $BQ_k$, i.e., if $A \in Q_k$ or $BQ_k$, then $\mathrm{Tr}( [A^{(n)}]) = a_n(\mathrm{Tr}(A))$. We then use twisting results about an equivariant motivic Euler characteristic from chapter $3$ of \cite{YZ} to show that the theorem holds for elements of $MQ_k$. Finally, we appeal to a theorem of \cite{GMS} to give us the result.
\subsection{Quadratic and biquadratic étale algebras}
\begin{lemma}\label{surjquad}
The trace map restricted to $Q_k$ gives us a surjection $\mathrm{Tr}: Q_k \to \widehat{\mathrm{W}}(k)$.
\end{lemma}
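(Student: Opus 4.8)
Lemma \ref{surjquad}: the trace map restricted to $Q_k$ gives a surjection $\mathrm{Tr}: Q_k \to \widehat{\mathrm{W}}(k)$.

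\textbf{Approach.} The key observation is that the surjectivity of $\mathrm{Tr}$ on all of $K_0(\Et_k)$ — already established in Lemma \ref{surjK0} — was in fact proved by exhibiting, for each class $\alpha \in (k^\times)/(k^\times)^2$, an explicit element $X_\alpha$ whose constituents are only $[k]$ and classes of the form $[k(\sqrt{a})]$. Since such $k(\sqrt{a})$ is either $k$ itself or a genuine quadratic field extension of $k$, every one of those constituents already lies in $Q_k$. So the plan is simply to revisit that construction and record that it lands in the subgroup $Q_k$, then combine with the fact that the symbols $\langle \alpha \rangle$ generate $\widehat{\mathrm{W}}(k)$ as an abelian group.

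\textbf{Steps.} First I would recall, from Theorem 4.1 of \cite{La} as invoked in the opening definitions, that $\widehat{\mathrm{W}}(k)$ is generated as an abelian group by the symbols $\langle \alpha \rangle$, with $\alpha$ ranging over representatives of $(k^\times)/(k^\times)^2$. Next, since $\mathrm{Tr}|_{Q_k}\colon Q_k \to \widehat{\mathrm{W}}(k)$ is a homomorphism of abelian groups, its image is a subgroup; hence it suffices to show each $\langle \alpha \rangle$ lies in that image. Then I would reproduce the case analysis of Lemma \ref{surjK0}: for $\alpha$ a square take $X_\alpha = [k]$; for $2 \in (k^\times)^2$ and $\alpha$ a non-square take $X_\alpha = [k(\sqrt{\alpha})] - [k]$; and for $2 \notin (k^\times)^2$ take $X_\alpha = [k(\sqrt{2\alpha})] - [k(\sqrt{2})] + [k]$. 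In each case I would note explicitly that every summand is the class of $k$ or of a quadratic extension of $k$ (the fields $k(\sqrt{2\alpha})$ and $k(\sqrt{\alpha})$ degenerating to $k$ when the radicand is a square, which is harmless), so that $X_\alpha \in Q_k$; and that $\mathrm{Tr}(X_\alpha) = \langle \alpha \rangle$ exactly as computed in Lemma \ref{surjK0}. Concluding, the image of $\mathrm{Tr}|_{Q_k}$ contains every generator $\langle \alpha \rangle$ and therefore equals $\widehat{\mathrm{W}}(k)$.

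\textbf{Main obstacle.} There is essentially no obstacle: the content is entirely contained in Lemma \ref{surjK0}, and the only thing to check is the bookkeeping that the witnesses $X_\alpha$ there never use étale algebras of dimension greater than $2$. The one point requiring a word of care is the degenerate situation in which a radicand such as $2\alpha$ happens to be a square, so that $k(\sqrt{2\alpha}) \cong k$ rather than a field; but then its class is just $[k] \in Q_k$, so the conclusion is unaffected. Hence the proof is short and the whole argument reduces to citing the earlier construction and the generation statement for $\widehat{\mathrm{W}}(k)$.
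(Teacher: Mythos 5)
Your proof is correct and takes the same approach as the paper: the paper's proof simply cites Lemma \ref{surjK0} as immediate, and you have just spelled out the observation that the witnesses $X_\alpha$ constructed there already lie in $Q_k$.
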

\begin{proof}
This is immediate from the proof of Theorem $\ref{surjK0}$.
\end{proof}

\begin{lemma}
For $A \in Q_k$, we have $\mathrm{Tr}(A^{(n)}) = a_n(\mathrm{Tr}(A))$. 
\end{lemma}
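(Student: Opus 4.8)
The plan is to reduce the statement to the generators of $Q_k$ together with the additivity built into the axioms of a power structure. Since $Q_k$ is generated as an abelian group by classes $[k]$ and $[L]$ for $L/k$ quadratic, and since by Lemma~\ref{subgroup} the set of elements on which $\mathrm{Tr}$ is compatible with the power operations is a subgroup of $K_0(\Et_k)$, it suffices to check compatibility on these generators. The generator $[k]$ is the multiplicative unit of $K_0(\Et_k)$: indeed $k^{(n)} = k$, so $\mathrm{Tr}(k^{(n)}) = \langle 1 \rangle = a_n(\langle 1 \rangle)$ since $a_n(1) = 1$ by Corollary~\ref{powerfns}. For a quadratic field extension $L/k$ we may write $L = k(\sqrt{a})$ for some $a \in k^\times$, so $[L] = [P_a]$ in the notation of Definition~\ref{pa}, and we must show $\mathrm{Tr}(P_a^{(n)}) = a_n(\mathrm{Tr}(P_a))$ for all $n$.

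The right-hand side is $a_n(\langle 2 \rangle + \langle 2a \rangle)$, since $\mathrm{Tr}(P_a) = \chi^{dR}(P_a) = \langle 2 \rangle + \langle 2a \rangle$ (and the degenerate case $a \in (k^\times)^2$ is handled by the $[k]$ case applied twice, via additivity). Writing $b = 2a$, this is $a_n(\langle 2 \rangle + \langle b \rangle)$, which Lemma~\ref{compute} evaluates explicitly: it equals $\frac{n+1}{2}(\langle 2 \rangle + \langle b \rangle)$ when $n$ is odd, and $(1 + \frac{n}{2})\langle 1 \rangle + \frac{n}{2}(\langle 2b \rangle + t_2 + t_b)$ when $n$ is even. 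On the left-hand side, Lemma~\ref{quadratic} computes $P_a^{(n)}$ explicitly as a disjoint union of copies of $P_a$ and $\Spec(k)$, and Corollary~\ref{quadcalc} records the result of applying $\chi^{mot}$: for $n$ odd it is $\frac{n+1}{2}(\langle 2 \rangle + \langle 2a \rangle)$, and for $n$ even it is $\langle 1 \rangle + \frac{n}{2}(\langle 2 \rangle + \langle 2a \rangle)$. So the remaining task is purely algebraic: verify in $\widehat{\mathrm{W}}(k)$ that these two expressions coincide. The odd case is immediate on inspection. For the even case one must check
$$
\langle 1 \rangle + \tfrac{n}{2}(\langle 2 \rangle + \langle b \rangle) = (1 + \tfrac{n}{2})\langle 1 \rangle + \tfrac{n}{2}(\langle 2b \rangle + t_2 + t_b),
$$
which, after cancelling and using that $t_2 = \langle 2 \rangle + \langle 2 \rangle - \langle 1 \rangle - \langle 4 \rangle = 2\langle 2\rangle - \langle 1 \rangle - \langle 1 \rangle$ vanishes (here $t_2 = 0$ since $2 \cdot 2$ is a square, so $\langle 2 \rangle + \langle 2 \rangle = \langle 1 \rangle + \langle 1 \rangle$ by the $2\langle a\rangle = 2\langle 2a\rangle$ relation... more carefully, $t_2 = \langle 2\rangle + \langle 2\rangle - \langle 1\rangle - \langle 2\cdot 2\rangle$ and $\langle 4\rangle = \langle 1\rangle$, so $t_2 = 2\langle 2\rangle - 2\langle 1\rangle = 0$), reduces to checking $\tfrac{n}{2}(\langle 2 \rangle + \langle b \rangle) = \tfrac{n}{2}(\langle 1 \rangle + \langle 2b \rangle + t_b)$, i.e. $\tfrac{n}{2}\,t_b$-type bookkeeping using the definition $t_b = \langle 2\rangle + \langle b\rangle - \langle 1\rangle - \langle 2b\rangle$ directly. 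This is a short direct computation, which I would carry out in the two subcases according to the parity of $n/2$ using Corollary~\ref{twotimesta}.

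I do not expect a serious obstacle here: the two sides have been pre-computed in Corollary~\ref{quadcalc} and Lemma~\ref{compute}, and matching them is elementary manipulation with the relations $2\langle a\rangle = 2\langle 2a\rangle$, $t_2 = 0$, and the $2$-torsion of $t_b$. The one point requiring a little care is the bookkeeping of floor functions and the congruence of $n$ modulo $4$ implicit in Lemma~\ref{compute}, together with remembering that $\mathrm{Tr}(P_a) = \langle 2\rangle + \langle 2a\rangle$ rather than $\langle 1\rangle + \langle a\rangle$ — so the substitution $b = 2a$ must be tracked consistently. Once compatibility is established on the generators $[k]$ and $[P_a]$, Lemma~\ref{subgroup} extends it to all of $Q_k$ at once, completing the proof.
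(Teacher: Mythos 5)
Your proposal is correct and follows essentially the same route the paper itself suggests: check the quadratic generators $[k]$ and $[P_a]$ by comparing Corollary~\ref{quadcalc} against Lemma~\ref{compute}, then extend to all of $Q_k$ via Lemma~\ref{subgroup}. One minor simplification: the final even-$n$ identity $\langle 2\rangle + \langle b\rangle = \langle 1\rangle + \langle 2b\rangle + t_b$ holds on the nose by the definition of $t_b$, so no case-split on the parity of $n/2$ is actually needed.
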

\begin{proof}
For $A = k(\sqrt{\alpha})$, this follows by definition of our $a_n$ functions, though we can verify this by using Lemma $\ref{compute}$ and Lemma $\ref{quadratic}$. For general $A$, this follows by Lemma $\ref{subgroup}$.
\end{proof}

We now directly show that the above lemma also holds if we replace $Q_k$ with $BQ_k$. We first compute what the symmetric power of a biquadratic extension is in $K_0(\Et_k)$. We then compute the corresponding $a_n$ of the trace form of the biquadratic extension in $\widehat{\mathrm{W}}(k)$, and then show they agree in Corollary $\ref{bqk}$. This is done by a complicated, though elementary, calculation.
\begin{lemma}
Let $K/k$ be a biquadratic extension with $K = k(\sqrt{\alpha}, \sqrt{\beta})$. Then there is an equality in $K_0(\Et_k)[[t]]$:
\begin{align*}
\sum_{i=0}^{\infty} [K^{(i)}] t^i = \ & (1-t^4)^{-1} [k] \\&+ \frac12\left( (1-t^2)^{-2} - (1-t^4)^{-1} \right) \left([ k(\sqrt{\alpha})]+[k(\sqrt{\beta})] + [k(\sqrt{\alpha\beta})]\right)\\& + \frac12\left( (1-t^4)^{-1} - \frac32 (1-t^2)^{-2} + \frac12 (1-t)^{-4} \right) [K].
\end{align*}
\end{lemma}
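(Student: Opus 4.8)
## Proof proposal

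The plan is to decompose the $\mathrm{Gal}_k$-set $K^n(\kbar)$ into $S_n$-orbits, identify the orbits with their stabilizer data, and then pass to symmetric powers by a generating-function bookkeeping exactly as in the proof of Lemma \ref{quadratic}, but now for the biquadratic case. Write $G = \mathrm{Gal}(K/k) \cong (\Z/2)^2$, so that $K(\kbar) = \Hom_k(K,\kbar)$ is a torsor under $G$ with four elements, and $K^n(\kbar) = (K(\kbar))^n$ carries the diagonal $G$-action together with the $S_n$-action permuting coordinates. The two actions commute, so $K^{(n)}(\kbar)$ is the set of $S_n$-orbits with its residual $G$-action, and $[K^{(n)}] \in K_0(\Et_k)$ is determined by the $G$-set structure, i.e. by how many orbits have each possible stabilizer subgroup $H \leq G$ (contributing a copy of $\Spec$ of the corresponding fixed field: $k$ for $H = G$, the three quadratic subfields $k(\sqrt\alpha), k(\sqrt\beta), k(\sqrt{\alpha\beta})$ for the three index-two subgroups, and $K$ for $H = 1$).

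The key computation is therefore purely combinatorial: an $S_n$-orbit on $(G\text{-torsor})^n$ is the same as a multiset of size $n$ drawn from the $4$-element torsor $T$, and $G$ acts on such multisets via its action on $T$. So I would count, as a function of $n$, the number of $G$-orbits of multisets of size $n$ on $T$ with each given stabilizer. This is a Burnside/P\'olya-style count: the generating function $\sum_n |\{\text{multisets of size } n\}|\, t^n$ is $(1-t)^{-4}$, and for each nontrivial subgroup $H$ one computes $\sum_n |\{\text{multisets of size } n \text{ fixed by } H\}|\, t^n$; since $H$ acts on $T$ without fixed points (it is a subgroup of a torsor's structure group) and each nonidentity element of $G$ acts on $T$ as a product of two transpositions, an $H$-fixed multiset is a union of $H$-orbit-multisets, giving $(1-t^2)^{-2}$ for an index-two $H$ and $(1-t^4)^{-1}$ for $H = G$. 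Inclusion–exclusion over the subgroup lattice of $(\Z/2)^2$ then expresses the number of orbits with stabilizer exactly $H$; bundling the three index-two subgroups together (they enter symmetrically, matching the symmetric combination $[k(\sqrt\alpha)] + [k(\sqrt\beta)] + [k(\sqrt{\alpha\beta})]$) yields precisely the three coefficients $(1-t^4)^{-1}$, $\frac12\big((1-t^2)^{-2} - (1-t^4)^{-1}\big)$, and $\frac12\big((1-t^4)^{-1} - \frac32(1-t^2)^{-2} + \frac12(1-t)^{-4}\big)$ asserted in the statement. One should sanity-check the total: setting all three fixed-field classes and $[K]$ and $[k]$ to $1$ (i.e. applying the ring map to $\Z$ counting $\kbar$-points, or rather the "forget the $G$-action" map) the right side must collapse to $(1-t)^{-4}$, which it does since $1 + 3\cdot\frac12((1-t^2)^{-2}-(1-t^4)^{-1}) + \frac12((1-t^4)^{-1} - \frac32(1-t^2)^{-2} + \frac12(1-t)^{-4})$ simplifies to $(1-t)^{-4}$.

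The main obstacle I anticipate is not conceptual but bookkeeping: correctly carrying out the inclusion–exclusion over the five subgroups of $(\Z/2)^2$ and verifying that the "exactly stabilizer $H$" counts are non-negative and assemble into the stated rational-function coefficients without sign or factor-of-two errors. Concretely, if $N_{\geq H}(t)$ denotes the generating function for $H$-fixed multisets, then $N_{\geq 1} = (1-t)^{-4}$, $N_{\geq H_i} = (1-t^2)^{-2}$ for the three index-two subgroups $H_1,H_2,H_3$, and $N_{\geq G} = (1-t^4)^{-1}$; the number of orbits with stabilizer exactly $H$ is obtained by M\"obius inversion on the subgroup lattice, and each orbit with stabilizer $H$ contributes $[\,(K)^{H}\,]$ to $\sum_i [K^{(i)}]t^i$. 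One must then divide the orbit counts by the appropriate index (an orbit with stabilizer $H$ accounts for $[G:H]$ many $S_n$-orbit elements over $\kbar$, but contributes a single $\Spec$ of a degree-$[G:H]$ algebra, so the $\kbar$-point count is automatically consistent) — tracking this is where care is needed. Once the coefficient of each fixed-field class is pinned down and matched against the three displayed rational functions, the lemma follows; I would close by remarking that $(K)^{H}$ for the three index-two $H$ runs over exactly $k(\sqrt\alpha), k(\sqrt\beta), k(\sqrt{\alpha\beta})$, explaining the symmetric sum, and that this is the biquadratic analogue of Lemma \ref{quadratic}, with the later Corollary \ref{bqk} obtained by applying $\chi^{mot}$ (equivalently $\mathrm{Tr}$) to both sides and comparing with Lemma \ref{compute}.
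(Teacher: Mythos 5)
Your argument is sound and essentially the same as the paper's: the paper reduces to the Burnside ring of $G = (\Z/2\Z)^2$ and cites Proposition~1.4 of Webb (\cite{WebbGSet}, worked out on p.~5 there) for exactly the generating-series identity you derive from scratch via M\"obius inversion over the subgroup lattice, with each $G$-orbit of multisets having stabilizer $H$ contributing $[K^H]$ weighted by $N_{=H}(t)/[G:H]$. Your bookkeeping of the coefficients $(1-t^4)^{-1}$, $\tfrac12\bigl((1-t^2)^{-2}-(1-t^4)^{-1}\bigr)$ and $\tfrac14\bigl((1-t)^{-4}-3(1-t^2)^{-2}+2(1-t^4)^{-1}\bigr)$ is correct and matches the displayed formula.

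One small slip in the optional sanity check: the expression you wrote, $1 + 3\cdot\tfrac12\bigl((1-t^2)^{-2}-(1-t^4)^{-1}\bigr) + \tfrac12\bigl((1-t^4)^{-1}-\tfrac32(1-t^2)^{-2}+\tfrac12(1-t)^{-4}\bigr)$, does not simplify to $(1-t)^{-4}$ --- it equals $\tfrac14(1-t)^{-4}+\tfrac34(1-t^2)^{-2}$, which is the Burnside orbit-count generating function, not the $\kbar$-point count. To count $\kbar$-points (i.e.\ apply the rank/dimension homomorphism) you must weight $[k]\mapsto 1$, each quadratic $\mapsto 2$, and $[K]\mapsto 4$; with those weights the three coefficients do collapse to $(1-t)^{-4}$ as intended. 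This is only a presentation slip in the check, not a flaw in the main argument.
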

\begin{proof}
Let $G = \Gal(K/k) \cong (\Z/2\Z)^2$. Let $X$ be the finite $\Gal_k$ set given by $\Spec(K)(\kbar)$, which is isomorphic to $G$ as a $\Gal_k$ set. The symmetric power $X^{(m)} = \Spec(K)^{(m)}(\kbar)$, so the situation immediately reduces to looking at symmetric powers of finite $\Gal_k$ sets. Moreover, the $\Gal_k$ action on $X$ factors through $G$, and so we can reduce to thinking of the $G$ action on $X$. Note that $X^{(m)}$ will also be a finite $G$ set, so we can study $X^{(m)}$ in the Burnside ring, $B(G)$. Proposition 1.4 of \cite{WebbGSet} allows us to compute a generating series for $X^{(m)}$ in $B(G)$. For $X$ and $G$ as above, this is worked through explicitly on page 5 of loc. cit. to obtain:
\begin{align*}
\sum_{i=0}^{\infty} [X^{(i)}] t^i = \ & (1-t^4)^{-1} [X/G] \\&+ \frac12\left( (1-t^2)^{-2} - (1-t^4)^{-1} \right) \left( [X/G_1]+[X/G_2] + [X/G_3]\right)\\& + \frac12\left( (1-t^4)^{-1} - \frac32 (1-t^2)^{-2} + \frac12 (1-t)^{-4} \right) [X],
\end{align*}
where $G_1, G_2, G_3$ correspond to the three proper and non trivial subgroups of $G$. Converting back from the Burnside ring to $K_0(\Et_k)$ gives the result. 
\end{proof}

\begin{cor}\label{pseries}
Let $K/k$ be as above, and let $q=\mathrm{Tr}(K)$. Then there is an equality in $\widehat{\mathrm{W}}(k)[[t]$:
\begin{align*}
\sum_{i=0}^{\infty} \mathrm{Tr}(K^{(i)}) t^i =\ & (1-t^4)^{-1} \langle 1 \rangle \\&+ \frac12\left( (1-t^2)^{-2} - (1-t^4)^{-1} \right) \left( 2\langle 1 \rangle + \langle 2 \rangle q\right)\\& + \frac12\left( (1-t^4)^{-1} - \frac32 (1-t^2)^{-2} + \frac12 (1-t)^{-4} \right) (q).
\end{align*}
\end{cor}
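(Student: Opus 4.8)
The plan is simply to apply the trace homomorphism $\mathrm{Tr}: K_0(\Et_k) \to \widehat{\mathrm{W}}(k)$ to the identity in $K_0(\Et_k)[[t]]$ provided by the previous lemma. Since $\mathrm{Tr}$ is a ring homomorphism it induces a map $K_0(\Et_k)[[t]] \to \widehat{\mathrm{W}}(k)[[t]]$ acting coefficient-wise, so before anything else I would check that the three series $(1-t^4)^{-1}$, $\tfrac12\big((1-t^2)^{-2} - (1-t^4)^{-1}\big)$ and $\tfrac12\big((1-t^4)^{-1} - \tfrac32(1-t^2)^{-2} + \tfrac12(1-t)^{-4}\big)$ all lie in $\Z[[t]]$; this is an elementary check on binomial coefficients (for instance, the coefficient of $t^{2j}$ in $(1-t^2)^{-2} - (1-t^4)^{-1}$ is $j+1$ when $j$ is odd and $j$ when $j$ is even, hence always even), and it legitimises applying $\mathrm{Tr}$ term by term. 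Applying $\mathrm{Tr}$ to the right-hand side of the previous lemma then produces $(1-t^4)^{-1}\,\mathrm{Tr}(k) + \tfrac12\big((1-t^2)^{-2} - (1-t^4)^{-1}\big)\big(\mathrm{Tr}(k(\sqrt\alpha)) + \mathrm{Tr}(k(\sqrt\beta)) + \mathrm{Tr}(k(\sqrt{\alpha\beta}))\big) + \tfrac12\big((1-t^4)^{-1} - \tfrac32(1-t^2)^{-2} + \tfrac12(1-t)^{-4}\big)\,\mathrm{Tr}(K)$, and it remains to identify these three trace forms with the terms in the statement.

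Next I would record the trace-form computations needed. We have $\mathrm{Tr}(k) = \langle 1 \rangle$, and $\mathrm{Tr}(K) = q$ by the definition of $q$. Since $K/k$ is biquadratic, each of $\alpha$, $\beta$, $\alpha\beta$ is a non-square, so by Definition \ref{pa} together with $\mathrm{Tr} = \chi^{mot}|_{K_0(\Et_k)}$ we obtain $\mathrm{Tr}(k(\sqrt\gamma)) = \langle 2 \rangle + \langle 2\gamma \rangle$ for each $\gamma \in \{\alpha, \beta, \alpha\beta\}$. Summing the three, $\mathrm{Tr}(k(\sqrt\alpha)) + \mathrm{Tr}(k(\sqrt\beta)) + \mathrm{Tr}(k(\sqrt{\alpha\beta})) = 3\langle 2 \rangle + \langle 2\alpha \rangle + \langle 2\beta \rangle + \langle 2\alpha\beta \rangle$.

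The only identity with genuine content is that this last sum equals $2\langle 1 \rangle + \langle 2 \rangle q$. To see it, I would first compute $q = \mathrm{Tr}(K)$ explicitly using the $k$-basis $1, \sqrt\alpha, \sqrt\beta, \sqrt{\alpha\beta}$ of $K$: the trace form is diagonal in this basis with entries $4, 4\alpha, 4\beta, 4\alpha\beta$, so $q = \langle 1 \rangle + \langle \alpha \rangle + \langle \beta \rangle + \langle \alpha\beta \rangle$ as $4$ is a square. Hence $\langle 2 \rangle q = \langle 2 \rangle + \langle 2\alpha \rangle + \langle 2\beta \rangle + \langle 2\alpha\beta \rangle$ and $2\langle 1 \rangle + \langle 2 \rangle q = 2\langle 1 \rangle + \langle 2 \rangle + \langle 2\alpha \rangle + \langle 2\beta \rangle + \langle 2\alpha\beta \rangle$; applying the relation $2\langle 1 \rangle = 2\langle 2 \rangle$ in $\widehat{\mathrm{W}}(k)$ (the lemma just before Corollary \ref{twotimesta}, taken with $a=1$) rewrites the constant part $2\langle 1 \rangle + \langle 2 \rangle$ as $3\langle 2 \rangle$, matching the sum above. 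Substituting these three identifications into the image of the previous lemma's right-hand side gives exactly the displayed formula.

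There is essentially no serious obstacle here: the statement is bookkeeping layered on top of the already-proven lemma. The two points deserving a moment's care are the integrality of the coefficient series, without which applying $\mathrm{Tr}$ coefficient-wise would not make sense, and the observation that the expression $2\langle 1 \rangle + \langle 2 \rangle q$ in the statement only coincides with the honest sum of the quadratic trace forms after using $2\langle 1 \rangle = 2\langle 2 \rangle$ in $\widehat{\mathrm{W}}(k)$.
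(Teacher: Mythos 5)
Your proof is correct and follows the same route as the paper: apply $\mathrm{Tr}$ to the identity of the previous lemma and identify $\mathrm{Tr}([k(\sqrt\alpha)]+[k(\sqrt\beta)]+[k(\sqrt{\alpha\beta})]) = 3\langle 2\rangle + \langle 2\alpha\rangle + \langle 2\beta\rangle + \langle 2\alpha\beta\rangle$ with $2\langle 1\rangle + \langle 2\rangle q$ via $2\langle 1\rangle = 2\langle 2\rangle$. Your extra remarks — the integrality check on the rational-coefficient series and the explicit diagonalisation $q = \langle 1\rangle + \langle \alpha\rangle + \langle \beta\rangle + \langle \alpha\beta\rangle$ — are legitimate elaborations of steps the paper leaves implicit.
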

\begin{proof}
Apply $\mathrm{Tr}$ to the above lemma. Note that 
$$
\mathrm{Tr}\left( [k(\sqrt{\alpha})]+[k(\sqrt{\beta})] + [k(\sqrt{\alpha\beta})]\right) = 3\langle 2\rangle + \langle 2\alpha\rangle+\langle2\beta\rangle+\langle2\alpha\beta\rangle = 2\langle 1 \rangle + \langle 2 \rangle q,
$$
which gives the result as required. 
\end{proof}
\begin{lemma}\label{anbiquadratic}
Let $K/k$ be as above. Let $q = \mathrm{Tr}_{K/k} = \langle 1 \rangle + \langle \alpha \rangle + \langle \beta \rangle + \langle \alpha\beta \rangle$ for some $\alpha, \beta$. We can compute $a_n(q)$ to be:
\begin{equation*}
a_n(q) = \begin{cases}
\frac{1}{4}{ n+3 \choose 3}q & \text{ if $n$ is odd},\\
(\frac{1}{4}( {n+3 \choose 3} - \frac{3n+6}{8})q + \frac{n+2}{4}( 2 \langle 1 \rangle + \langle 2\rangle q)&\text{ if $n \equiv 2 \pmod{4}$},\\
(\frac{1}{4}({n+3 \choose 3} - \frac{3n+2}{8})q + \frac{n}{4}( 2 \langle 1 \rangle + \langle 2\rangle q)+\langle1\rangle&\text{ if $n \equiv 0 \pmod{4}$}.
\end{cases}
\end{equation*}
\end{lemma}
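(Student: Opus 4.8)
The plan is to compute the generating function $(1-t)^{-q}=\sum_{n\ge 0}a_n(q)t^n$ in closed form and then read off the coefficient of $t^n$ in each residue class of $n$ modulo $4$. Write $1$ for the unit form $\langle 1\rangle$ of $\widehat{\mathrm{W}}(k)$, and note that $\{1,\langle\alpha\rangle,\langle\beta\rangle,\langle\alpha\beta\rangle\}$ is a subgroup of $\widehat{\mathrm{W}}(k)^\times$ isomorphic to $(\Z/2\Z)^2$. Since $q=1+\langle\alpha\rangle+\langle\beta\rangle+\langle\alpha\beta\rangle$ and $(1-t)^{-(r+r')}=(1-t)^{-r}(1-t)^{-r'}$, we have
$$(1-t)^{-q}=(1-t)^{-1}\cdot(1-t)^{-\langle\alpha\rangle}\cdot(1-t)^{-\langle\beta\rangle}\cdot(1-t)^{-\langle\alpha\beta\rangle}.$$
By Corollary~\ref{ansdefn} (equivalently Definition~\ref{zkdefn}) the formula $a_n(\langle c\rangle)=\langle c^n\rangle+\lfloor n/2\rfloor\,t_c$ summed up gives, for any $c\in k^\times$, the identity of rational functions
$$(1-t)^{-\langle c\rangle}=\frac{1+\langle c\rangle t}{1-t^2}+\frac{t^2}{(1-t)(1-t^2)}\,t_c,$$
and $c=1$ (where $t_1=0$) recovers $(1-t)^{-1}=1/(1-t)$.

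First I would simplify the triple product of the factors $(1-t)^{-\langle c\rangle}$ for $c\in\{\alpha,\beta,\alpha\beta\}$. Since $t_ct_{c'}=0$ for all $c,c'$ (Lemma~\ref{prodts}), every term carrying two or more $t_c$'s drops out, so the product equals $M+\frac{t^2}{(1-t)(1-t^2)}\sum_c t_c\prod_{c'\ne c}M_{c'}$ where $M_c=(1+\langle c\rangle t)/(1-t^2)$ and $M=M_\alpha M_\beta M_{\alpha\beta}$. Expanding the numerator of $M$ and using the group structure above gives $M=(1+st+st^2+t^3)/(1-t^2)^3$ with $s=q-1$, and the factorisation $1+st+st^2+t^3=(1+t)\big((1-t)^2+qt\big)$ turns this into $M=\big((1-t)^2+qt\big)/\big((1-t)(1-t^2)^2\big)$. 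For the correction term, Lemma~\ref{tab} together with $2$-torsionality of the $t_c$ yields $t_c\big(\langle c'\rangle+\langle c''\rangle\big)=0$ when $\{c,c',c''\}=\{\alpha,\beta,\alpha\beta\}$, and with $\langle c\rangle t_c=t_c$ each $t_c\prod_{c'\ne c}M_{c'}$ collapses to $(1+t^2)t_c/(1-t^2)^2$; feeding in the identity
$$t_\alpha+t_\beta+t_{\alpha\beta}=q-\langle 2\rangle q$$
(obtained by expanding the definition of $t_c$ and using $2\langle 1\rangle=2\langle 2\rangle$) and multiplying through by $(1-t)^{-1}$ yields the closed form
$$(1-t)^{-q}=\frac{1}{(1-t^2)^2}+\frac{qt}{(1-t)^2(1-t^2)^2}+\frac{t^2(1+t^2)}{(1-t)^2(1-t^2)^3}\,\big(q-\langle 2\rangle q\big).$$

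Finally I would extract the coefficient of $t^n$ from the three summands. The first contributes $\big(\tfrac{n}{2}+1\big)\langle 1\rangle$ for $n$ even and $0$ for $n$ odd; the coefficients of $t^n$ in $qt\,(1-t)^{-2}(1-t^2)^{-2}$ and in $t^2(1+t^2)(1-t)^{-2}(1-t^2)^{-3}$ are convolutions of binomial coefficients that evaluate in closed form, producing the $\tfrac{1}{4}\binom{n+3}{3}$-type coefficient of $q$ and the $\tfrac{n}{4}$-type coefficient of $2\langle 1\rangle+\langle 2\rangle q$ in the statement. A parity check shows the $(q-\langle 2\rangle q)$-summand vanishes for $n$ odd (each term of the relevant convolution is even while $q-\langle 2\rangle q$ is $2$-torsion) and for $n$ even contributes a multiple of $q-\langle 2\rangle q$; using $2q=2\langle 2\rangle q$ to renormalise and splitting into $n$ odd, $n\equiv 2\pmod 4$, $n\equiv 0\pmod 4$ gives the three cases.

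The genuinely delicate part is this last step: evaluating the convolution sums and, more importantly, organising the $2$-torsion contributions ($t_c$ and $q-\langle 2\rangle q$) so that the answer reduces to a single formula per residue class --- the point being that $a_n(q)$ is pinned down only up to the relation $2q=2\langle 2\rangle q$, so one must commit to a normal form before comparing with the stated expression. The rest is mechanical. (One could equally avoid generating functions and run the recursion $a_n(q)=\sum_{i=0}^{n}a_i(\langle\alpha\rangle+\langle\beta\rangle)\,a_{n-i}(1+\langle\alpha\beta\rangle)$ directly, feeding in Lemma~\ref{compute} twice; this is the ``brute force'' variant, with the same combinatorial core.)
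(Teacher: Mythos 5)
Your proposal is correct, and it takes a genuinely different route from the paper's. The paper's proof sets $p=\langle 1\rangle+\langle\alpha\rangle$, writes $q=p+\langle\beta\rangle p$, and expands $a_n(q)=\sum_{i=0}^n a_i(p)a_{n-i}(\langle\beta\rangle p)$ term by term, applying Lemma~\ref{compute} in each residue class of $i$ and $n$ modulo~$4$ (and then, within the even-$n$ cases, splitting further modulo~$8$ to renormalise the $t_a$-contributions). Your approach instead assembles the generating function $(1-t)^{-q}=\prod_c(1-t)^{-\langle c\rangle}$ over $c\in\{1,\alpha,\beta,\alpha\beta\}$, expresses each factor as a rational function in $t$ using $a_n(\langle c\rangle)=\langle c^n\rangle+\lfloor n/2\rfloor t_c$, and then collapses the product in one stroke using $t_ct_{c'}=0$, $\langle c\rangle t_c=t_c$, $2$-torsionality, and the identity $t_\alpha+t_\beta+t_{\alpha\beta}=q-\langle 2\rangle q$. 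The resulting closed form
$$
(1-t)^{-q}=\frac{1}{(1-t^2)^2}+\frac{qt}{(1-t)^2(1-t^2)^2}+\frac{t^2(1+t^2)}{(1-t)^2(1-t^2)^3}\bigl(q-\langle 2\rangle q\bigr)
$$
is correct (I checked that its coefficients at $t^1,\dots,t^4$ reproduce the stated values modulo $2q=2\langle2\rangle q$). What your route buys is that all the $t_c$-bookkeeping is done exactly once, in the generating function, rather than separately in each of the sixteen $(i\bmod 4,\,n\bmod 4)$ cases; the price is that the final coefficient extraction produces an answer in the basis $\{\langle1\rangle,q,q-\langle2\rangle q\}$ that must be converted to the paper's normal form using $2q=2\langle2\rangle q$, and that conversion is itself where the residue of $n$ modulo~$4$ (and internally modulo~$8$) enters. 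You correctly flag this as the delicate step, and it is precisely the same modulo-$8$ parity bookkeeping the paper does explicitly; both proofs leave comparable amounts of arithmetic to the reader. Your closing remark identifying the ``brute force'' variant is essentially the paper's proof, so you have located both approaches accurately.
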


\begin{proof}
We first set out some notation. First, note we have the following easy to verify equality:
$$
t_\alpha + t_\beta + t_{\alpha\beta} = \langle 2 \rangle q - q.
$$
Let $p = \langle 1 \rangle + \langle \alpha \rangle$, so that $q=p+\langle \beta \rangle p$. We can use the axioms of power structures to compute
$$
a_n(q) = \sum_{i=0}^n a_i(p)a_{n-i}(\langle \beta \rangle p).
$$
Suppose first that $n \equiv 1,3 \pmod{4}$. Using Lemma $\ref{compute}$, we can compute $a_i(p)a_{n-i}(\langle \beta \rangle p)$. This is a straightforward computation in all cases. We show the hardest case here and the remaining cases are similar. Consider $i \equiv 1 \pmod{4}$ and $n-i \equiv 2 \pmod{4}$. We compute
\begin{align*}
a_i(p)a_{n-i}(\langle \beta \rangle p) &= \frac{i+1}{2}( \langle 1 \rangle + \langle \alpha \rangle) \cdot \left( \langle 1 \rangle + \frac{n-i}{2}(\langle 1 \rangle + \langle \alpha^2\beta\rangle + t_{\beta} + t_{\alpha\beta}\right)\\
&=\frac{(n-i+1)(i+1)}2 (\langle 1 \rangle + \langle \alpha \rangle) + \frac{i+1}{2} (t_{\beta}+t_{\alpha\beta} + \langle \alpha \rangle(t_\beta + t_{\alpha\beta}))\\
&=\frac{(n-i+1)(i+1)}2 (\langle 1 \rangle + \langle \alpha \rangle),
\end{align*}
where to go to the final line, we use that $t_\gamma$ is $2$-torsion for any $\gamma$, as well as Corollary $\ref{ctab}$. With similar computations,  we can compute
\begin{equation*}
a_i(p)a_{n-i}(\langle \beta \rangle p) = \begin{cases}
\frac{(n-i+1)(i+1)}{2}( \langle \beta \rangle + \langle \alpha \beta \rangle) & \text{ if $i \equiv 0 \pmod{4}$},\\
\frac{(n-i+1)(i+1)}2 (\langle 1 \rangle + \langle \alpha \rangle) &\text{ if $i \equiv 1 \pmod{4}$},\\
\frac{(n-i+1)(i+1)}{2}( \langle \beta \rangle + \langle \alpha \beta \rangle) & \text{ if $i \equiv 2 \pmod{4}$},\\
\frac{(n-i+1)(i+1)}2 (\langle 1 \rangle + \langle \alpha \rangle) &\text{ if $i \equiv 3 \pmod{4}$.}
\end{cases}
\end{equation*}
Summing these from $i=0$ to $i=n$ tells us we must have that for some $m,m'$
$$
a_n(q) =  m(\langle 1 \rangle + \langle \alpha \rangle) + m'(\langle \beta \rangle + \langle \alpha \beta \rangle).
$$ 
Note however, since $a_n(q)$ must be symmetric in the $\langle\alpha\rangle, \langle\beta\rangle, \langle\alpha\beta\rangle$ terms, it must be the case that $m=m'$, and for dimension reasons, we see that $m = \frac14{ n+3 \choose 3}$.

Now suppose that $n \equiv 2 \pmod{4}$. Again using Lemma $\ref{compute}$, we can compute
\begin{equation*}
a_i(p)a_{n-i}(\langle \beta \rangle p) = \begin{cases}
\langle 1 \rangle + \frac{i(n-i)+n}{2}(\langle 1 \rangle + \langle \alpha \rangle) + t_\beta + t_{\alpha\beta} & \text{ if $i \equiv 0 \pmod{4}$},\\
\frac{(n-i+1)(i+1)}2 (\langle \beta \rangle + \langle \alpha\beta\rangle) &\text{ if $i \equiv 1 \pmod{4}$},\\
\langle 1 \rangle + \frac{i(n-i)+n}{2}(\langle 1 \rangle + \langle \alpha \rangle) + t_\alpha & \text{ if $i \equiv 2 \pmod{4}$},\\
\frac{(n-i+1)(i+1)}2 (\langle \beta \rangle + \langle \alpha\beta\rangle) &\text{ if $i \equiv 3 \pmod{4}$.}
\end{cases}
\end{equation*}
Note that $a_n(q)$ is then a sum over these terms, where we run through each term $\frac{n-2}{4}$ times, and then once more over the $i\equiv0,1,2 \pmod{4}$ terms. By symmetry, we see that we have to have the same number of $\langle \alpha \rangle, \langle \beta \rangle$ and $\langle \alpha\beta \rangle$ terms, so we have
$$
a_n(q) = mq + \frac{n+2}{2} \langle 1 \rangle + \frac{n+2}{4}(t_\alpha+t_\beta+t_{\alpha\beta}).
$$
for some integer $m$. We now split this into cases, and verify that this gives us the result.

Note, if $n \equiv 6 \pmod{8}$, then this is simply $mq + \frac{n+2}{2} \langle 1 \rangle$. By counting ranks, we can compute $m= \frac{1}{4}{ (n+3) \choose 3} - \frac{n+2}{8}$.  However, if $n \equiv 6 \pmod{8}$, then $\frac{n+2}{4}$ is even. Since  $2q = 2\langle2\rangle q$, we have.
$$
\left(\frac{1}{4}{n+3 \choose 3} - \frac{3n+6}{8}\right)q + \frac{n+2}{4}\left( 2 \langle 1 \rangle + \langle 2\rangle q\right) = m'q + \frac{n+2}{2}\langle1\rangle,
$$
and again by counting ranks, $m'=m$. 

It remains to show the result for $n \equiv 2 \pmod{8}$. Note that
$$
a_n(q) = mq + \frac{n+2}{2}(\langle 1 \rangle) + \langle 2 \rangle q - q. 
$$
Let $m' = m-1-\frac{n-2}{4}$. Then we see $\frac{n-2}{4}$ is even, so $\frac{n-2}{4}q = \frac{n-2}{4}\langle 2 \rangle q$. Similarly, since $\frac{n+2}{2}$ is even, $\frac{n+2}{2}\langle 1 \rangle = \frac{n+2}{2}\langle 2 \rangle$. We therefore have:
\begin{align*}
a_n(q) &= m'q + \frac{n+2}{2}(\langle 1 \rangle) + \frac{n-2}{4}q +  \langle 2 \rangle q \\
&= m'q + \frac{n+2}{2}(\langle 1 \rangle) + \frac{n+2}{4} \langle 2\rangle q \\
&= m'q + \frac{n+2}{4}( 2 \langle 1 \rangle + \langle 2 \rangle q),
\end{align*}
and by counting ranks, $m' = (\frac{1}{4}{n+3 \choose 3} - \frac{3n+6}{8})$, which gives the result.

Now suppose that $n \equiv 0 \pmod{4}$. Again, we can compute
\begin{equation*}
a_i(p)a_{n-i}(\langle \beta \rangle p) = \begin{cases}
\langle 1 \rangle + \frac{i(n-i)+n}{2}(\langle 1 \rangle + \langle \alpha \rangle)  & \text{ if $i \equiv 0 \pmod{4}$},\\
\frac{(n-i+1)(i+1)}2 (\langle \beta \rangle + \langle \alpha\beta\rangle) &\text{ if $i \equiv 1 \pmod{4}$},\\
\langle 1 \rangle + \frac{i(n-i)+n}{2}(\langle 1 \rangle + \langle \alpha \rangle) + t_\alpha + t_\beta + t_{\alpha\beta} & \text{ if $i \equiv 2 \pmod{4}$},\\
\frac{(n-i+1)(i+1)}2 (\langle \beta \rangle + \langle \alpha\beta\rangle) &\text{ if $i \equiv 3 \pmod{4}$.}
\end{cases}
\end{equation*}
Computing $a_n(q)$ by summing over these terms to obtain
$$
a_n(q) = mq + \frac{n+2}{2}\langle 1 \rangle + \frac{n}{4}(t_\alpha+t_\beta+t_{\alpha\beta}),
$$
where again we use symmetry to deduce that we must have the same number of $(\langle 1 \rangle + \langle \alpha \rangle)$ terms as $(\langle \beta \rangle + \langle \alpha\beta \rangle)$ terms. The $\frac{n+2}{2} \langle 1 \rangle$ term comes by counting the $\langle 1 \rangle$ terms that appear for $i \equiv 0,2 \pmod{4}$. We can compute $m = \frac14{n+3\choose3}-\frac{n+2}{8}$ by taking the rank of both sides.

First, suppose that $n \equiv 0 \pmod{8}$. Then
$$
a_n(q) = mq + \frac{n+4}{2}\langle 1 \rangle
$$
 Note that, since $\frac{n}4$ is even, $\frac{n}4 q = \frac{n}4 \langle 2 \rangle q$, so 
\begin{align*}
\left( \frac14{n+3\choose3}-\frac{n+2}{8}\right)q + \frac{n+4}{2}\langle 1 \rangle &=  \left(\frac14{n+3\choose3}-\frac{3n+2}{8}\right)q + \frac{n}{4}\langle 2 \rangle q + \frac{n+4}{4}\langle 1 \rangle\\
&=  \left(\frac14{n+3\choose3}-\frac{3n+2}{8}\right)q + \frac{n}{4}( 2\langle 1 \rangle + \langle 2 \rangle q) + \langle 1 \rangle,
\end{align*}
as required. Finally, suppose that $n \equiv 4 \pmod{8}$. Then
$$
a_n(q) = mq + \frac{n+2}{2}\langle1\rangle + \langle2\rangle q - q.
$$
Letting $m' = m-1-\frac{n-4}{4}$, and noting that since $\frac{n-4}{4}$ is even, $\frac{n-4}{4}\langle2\rangle q = \frac{n-4}{4} q$, gives us
\begin{align*}
a_n(q) &= m'q + \frac{n+2}{2}\langle1\rangle + \frac{n-4}{4} q + \langle 2 \rangle q \\
&= m'q + \frac{n}{4}(2\langle1\rangle + \langle 2 \rangle q) + \langle 1 \rangle,
\end{align*}
and we see that $m' = \left( \frac14 {n+3 \choose 3} - \frac{3n+2}{8} \right)$, which gives us the result as required.
\end{proof}
\begin{cor}
Let $K/k$ be a biquadratic field extension. Then $\mathrm{Tr}(K^{(n)})= a_n(\mathrm{Tr}(K))$.
\end{cor}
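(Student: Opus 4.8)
The plan is to compare the two explicit computations already established. Corollary $\ref{pseries}$ expresses the generating series $\sum_{i \geq 0} \mathrm{Tr}(K^{(i)}) t^i$ in closed form as a $\widehat{\mathrm{W}}(k)$-linear combination of the power series $(1-t^4)^{-1}$, $(1-t^2)^{-2}$ and $(1-t)^{-4}$, while Lemma $\ref{anbiquadratic}$ gives a closed formula for $a_n(q)$, where $q = \mathrm{Tr}(K) = \langle 1\rangle + \langle\alpha\rangle + \langle\beta\rangle + \langle\alpha\beta\rangle$. Since, by definition of the power structure on $K_0(\Et_k)$, $\mathrm{Tr}(K^{(n)})$ is exactly the coefficient of $t^n$ in the series of Corollary $\ref{pseries}$, it suffices to extract that coefficient and verify it equals the value of $a_n(q)$ recorded in Lemma $\ref{anbiquadratic}$.

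First I would record the coefficient of $t^n$ in each of the three auxiliary series: in $(1-t^4)^{-1}$ it is $1$ when $4 \mid n$ and $0$ otherwise; in $(1-t^2)^{-2}$ it is $\tfrac{n}{2}+1$ when $n$ is even and $0$ when $n$ is odd; and in $(1-t)^{-4}$ it is $\binom{n+3}{3}$. Substituting these into the expression of Corollary $\ref{pseries}$ and writing $q = \mathrm{Tr}(K)$ yields, for each $n$, a completely explicit element of $\widehat{\mathrm{W}}(k)$.

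Then I would split into the cases $n$ odd, $n \equiv 2 \pmod 4$ and $n \equiv 0 \pmod 4$. When $n$ is odd the first two contributions vanish and the third contributes $\tfrac12\cdot\tfrac12\binom{n+3}{3}q = \tfrac14\binom{n+3}{3}q$, which is the odd case of Lemma $\ref{anbiquadratic}$. When $n \equiv 2 \pmod 4$ the middle term contributes $\tfrac{n+2}{4}\bigl(2\langle 1\rangle + \langle 2\rangle q\bigr)$ and the third contributes $\bigl(\tfrac14\binom{n+3}{3} - \tfrac{3n+6}{8}\bigr)q$, matching the $n \equiv 2 \pmod 4$ case. When $n \equiv 0 \pmod 4$ one additionally picks up $\langle 1\rangle$ from $(1-t^4)^{-1}$, the middle term gives $\tfrac{n}{4}\bigl(2\langle 1\rangle + \langle 2\rangle q\bigr)$, and the third term simplifies, using $\tfrac12 - \tfrac{3(n+2)}{8} = -\tfrac{3n+2}{8}$, to $\bigl(\tfrac14\binom{n+3}{3} - \tfrac{3n+2}{8}\bigr)q$; this matches the final case of Lemma $\ref{anbiquadratic}$. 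In all cases $\mathrm{Tr}(K^{(n)}) = a_n(q) = a_n(\mathrm{Tr}(K))$, as required.

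Since both substantive inputs — the Burnside-ring identity behind Corollary $\ref{pseries}$ and the brute-force evaluation of $a_n(q)$ in Lemma $\ref{anbiquadratic}$ — are already in hand, there is no serious conceptual obstacle here; the only thing to be careful about is the elementary bookkeeping of extracting and recombining the coefficients in each residue class modulo $4$ (and observing, in the odd case, that $8 \mid (n+1)(n+3)$, so that $\tfrac14\binom{n+3}{3}$ is genuinely an integer). Combined with Lemma $\ref{subgroup}$ and the already-established case of $Q_k$, this also immediately upgrades to the statement for all of $BQ_k$.
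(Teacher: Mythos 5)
Your proposal is correct and is precisely the paper's own argument: extract the coefficient of $t^n$ from each of the three series in Corollary~\ref{pseries}, split by residue of $n$ modulo $4$, and match term-by-term against the closed formula in Lemma~\ref{anbiquadratic}. The coefficient extraction and the three case computations all check out.
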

\begin{proof}
Note that $a_n(\mathrm{Tr}(K))$ is computed by the lemma above. We note that we see then that $a_n(\mathrm{Tr}(K))$ is equal to the coefficient of $t^n$ in the power series from Corollary $\ref{pseries}$, which is exactly $\mathrm{Tr}(K^{(n)})$, as required. 
\end{proof}

\begin{cor}\label{bqk}
For any $A \in BQ_k$, we have $\mathrm{Tr}(A^{(n)}) = a_n(\mathrm{Tr}(A))$ for all $A \in BQ_k$. 
\end{cor}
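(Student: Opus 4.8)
The plan is to obtain this corollary for free from the two special cases already established, via the subgroup principle of Lemma~\ref{subgroup}. Recall that, by definition, $BQ_k$ is the abelian group generated by $[k]$, by $[L]$ for $L/k$ a quadratic field extension, and by $[L]$ for $L/k$ a biquadratic field extension.

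First I would apply Lemma~\ref{subgroup} to the ring homomorphism $\mathrm{Tr}\colon K_0(\Et_k)\to\widehat{\mathrm{W}}(k)$, where $K_0(\Et_k)$ carries the symmetric-power power structure (so the relevant functions on the source are $S_n([X])=[X^{(n)}]$) and $\widehat{\mathrm{W}}(k)$ carries the power structure of Corollary~\ref{finalpstruc} (with functions $a_n$). The lemma tells us that
$$
G:=\{\,r\in K_0(\Et_k)\ :\ \mathrm{Tr}(r^{(n)})=a_n(\mathrm{Tr}(r))\ \text{for all}\ n\ge 0\,\}
$$
is an abelian subgroup of $K_0(\Et_k)$.

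Next I would check that every generator of $BQ_k$ lies in $G$. The class $[k]$ and the classes of quadratic field extensions lie in $G$ by the lemma computing $\mathrm{Tr}(A^{(n)})$ for $A\in Q_k$, and the classes of biquadratic field extensions lie in $G$ by the corollary immediately preceding this one (which in turn rests on Lemma~\ref{anbiquadratic} and Corollary~\ref{pseries}). Since $G$ is a subgroup containing a generating set of $BQ_k$, we conclude $BQ_k\subseteq G$, which is precisely the assertion.

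There is no genuine obstacle here: the substantive work was the brute-force computation behind Lemma~\ref{anbiquadratic} together with the generating-series identity in the Burnside ring, and the present corollary is a purely formal consequence of those. The only point to be careful about is bookkeeping — one should note that the generating set appearing in the definition of $BQ_k$ is exactly the union of a generating set for $Q_k$ with the biquadratic field extensions, so that no further isomorphism classes of étale algebras need to be handled separately.
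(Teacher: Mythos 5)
Your argument matches the paper's: the paper also deduces the corollary by applying Lemma~\ref{subgroup} to the preceding corollary on biquadratic field extensions (together with the quadratic case). Your write-up is simply a more explicit unpacking of the same one-line deduction, and it is correct.
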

\begin{proof}
As with the quadratic case, this follows by applying Lemma $\ref{subgroup}$ to the above corollary.
\end{proof}

\subsection{Twisting results for the power structure}
In this section, we use results from chapter $3$ of \cite{YZ} about the compatibility of the motivic Euler characterstic with Galois twists in order to prove Corollary $\ref{inductionquad}$, an inductive statement that allows us to deduce that the trace map is compatible with the power structures on all of $MQ_k$. We first recall some results from \cite{YZ}.
\begin{defn}
Let $G$ be a finite Galois module. For $X/k$ a variety with a $G$ action, as in chapter $3$ of \cite{YZ}, we can define $^\sigma \! X$, the twist of $X$ by $\sigma$. Theorem 3.7 of \cite{YZ} says that assignment $X \mapsto \ \! ^\sigma \! X$ gives rise to a ring homomorphism $K_0^G(\mathrm{Var}_k) \to K_0(\mathrm{Var}_k)$. Note also that if $Y$ is a $G$-variety over $k$, then so is $Y^{(n)}$, where we give $Y^{(n)}$ the diagonal action of $G$. 

We have similar twisting results for the Grothendieck--Witt ring as well. For $V/k$ a finite dimensional $G$ representation with a $G$ invariant quadratic form $q$, then we can define the twist, $\ \! ^\sigma\! q$ to be the natural quadratic form on $(V \otimes_k \kbar)^{\Gal_k}$, where $\Gal_k$ acts via $\sigma$ on $V$ and via the canonical action on $\kbar$. Theorem 3.12 of \cite{YZ} then says that the assignment $q \mapsto \ \! ^\sigma \! q$ extends to a ring homomorphism $\widehat{\mathrm{W}}^G(k) \to \widehat{\mathrm{W}}(k)$. Finally, Theorem 3.14 of \cite{YZ} says that these homomorphisms are compatible, ie, $\chi^{mot}(\ \! ^\sigma X) = \ \! ^\sigma \! \chi^{mot}_G(X)$.
\end{defn} 
\begin{rem}
In this subsection, we will state many results in the language of general varieties, rather than varieties of dimension $0$, as certain results will hold in more generality than for simply varieties of dimension zero. However, when working with varieties of higher dimension, we must restrict to characteristic $0$ in order to talk about the power structure on $K_0(\mathrm{Var}_k)$. 
\end{rem}

\begin{lemma}\label{equivariantpstruc}
Suppose $\mathrm{char}(k) = 0$ (resp. $\mathrm{char}(k) \neq 2$). The assignment $[X] \mapsto [X^{(n)}]$ gives rise to a power structure on $K_0^G(\mathrm{Var}_k)$ (resp. $K_0^G(\Et_k)$) that is compatible with the forgetful map.
\end{lemma}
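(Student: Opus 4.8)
The plan is to transport the Gusein-Zade--Luengo--Melle-Hern\'andez construction underlying Theorem \ref{powerstructureonK0vark} to the equivariant setting, using the pre-power structure machinery of Definition \ref{pstrucdef} and Proposition \ref{propn1}. The first point is that it suffices to work with quasiprojective $G$-varieties: $K_0^G(\mathrm{Var}_k)$ is generated as an abelian group by classes $[X]$ with $X$ a quasiprojective $G$-variety, by the usual stratification argument run $G$-equivariantly (reducing to $X$ irreducible, a nonempty affine open $U$ gives the $G$-stable quasiaffine open $\bigcap_{g\in G} gU$, which is nonempty as a finite intersection of dense opens, and one induces on dimension). For such an $X$ the permutation action of $S_n$ on $X^n$ commutes with the diagonal $G$-action, and since $X^n$ is quasiprojective the quotient $X^{(n)} := X^n/S_n$ exists and carries a canonical $G$-action; forgetting $G$ gives back the ordinary symmetric power.

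Next I would define a pre-power structure on $K_0^G(\mathrm{Var}_k)$ by setting $(1-t)^{-[X]} := \sum_{i\ge 0}[X^{(i)}]t^i$ for $X$ quasiprojective and extending by additivity to all of $K_0^G(\mathrm{Var}_k)$. The substance is to check this is well defined and satisfies the axioms. Well-definedness amounts to compatibility with the scissor relation: for a $G$-stable open $U\subseteq X$ with complement $Z$, stratify $X^n$ according to which coordinates lie in $U$; the stratum with $i$ coordinates in $U$ is $\cong U^i\times Z^{n-i}$, these strata are $G$-stable and permuted by $S_n$, and the stabiliser of the ``first $i$'' stratum is $S_i\times S_{n-i}$, so $X^{(n)} \cong \coprod_{i+j=n} U^{(i)}\times Z^{(j)}$, i.e. $(1-t)^{-[X]} = (1-t)^{-[U]}(1-t)^{-[Z]}$; the same identity with $U,Z$ disjoint gives additivity. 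Axiom $3$ of pre-power structures holds since $(\Spec k)^{(n)}=\Spec k$ with trivial action, and the coefficient-of-$t$ axiom and the Adams-type relation $(1-t^i)^a=f(t^i)$ follow formally from the configuration-space description of the higher coefficients exactly as in \cite{GZLMH}, since every operation involved (disjoint union, Cartesian product, quotient by a symmetric group) is available in quasiprojective $G$-varieties. Proposition \ref{propn1} then upgrades this to a power structure on $K_0^G(\mathrm{Var}_k)$, which on quasiprojective $G$-varieties is necessarily $[X]\mapsto[X^{(n)}]$.

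Finally, compatibility with the forgetful map $\mathrm{forget}\colon K_0^G(\mathrm{Var}_k)\to K_0(\mathrm{Var}_k)$ (a ring homomorphism, since forgetting $G$ respects isomorphism, the scissor relation and products) follows from Proposition 2 of \cite{GZLMH}: on a generator $[X]$ one has $\mathrm{forget}\bigl((1-t)^{-[X]}\bigr)=\sum_i [\mathrm{forget}(X^{(i)})]t^i=\sum_i[(\mathrm{forget}\,X)^{(i)}]t^i=(1-t)^{-\mathrm{forget}([X])}$, and this identity propagates to all of $K_0^G(\mathrm{Var}_k)$ since both sides are multiplicative under the additions and negations generating the group. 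I expect the only real obstacle to be the bookkeeping of well-definedness, namely the stratification-of-$X^n$ computation showing the assignment descends through the scissor relations defining $K_0^G(\mathrm{Var}_k)$; there is no new geometric input beyond the observation that the whole \cite{GZLMH} construction is formal in the category of quasiprojective $G$-varieties.
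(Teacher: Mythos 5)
Your proposal is correct and takes essentially the same approach the paper has in mind: the paper dismisses this as ``immediate by the definition of symmetric powers,'' and what you have written is a careful unwinding of exactly that observation, namely that the entire GZLMH construction (quasiprojective reduction, stratification of $X^n$ to verify the scissor relation, the pre-power structure axioms, and Proposition \ref{propn1}) is formal in the category of quasiprojective $G$-varieties and is therefore transported verbatim, with the forgetful map intertwining $(1-t)^{-[-]}$ on generators. No genuinely different route is taken; you have just supplied the bookkeeping the paper elides.
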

\begin{proof}
This is immediate by the definition of symmetric powers. 
\end{proof}

In this section, we use these twisting results to prove an inductive statement that allows us to use Corollary $\ref{bqk}$ in order to show $\chi^{mot}(X^{(n)}) = a_n(\chi^{mot}(X))$ for a larger class of varieties. This is split into three parts. We first prove some results about twisting certain $G$-varieties by a given cocycle corresponding to a quadratic extension of our base field. We then prove results concerning the compatibility of the power structure $a_n$ with certain $G$ actions, and also compatibility with certain twists. Finally, we pull these results together to obtain the main result of this section, Corollary $\ref{MQRes}$. For this section, fix $K/k$ a quadratic field extension. Let $G= \Gal(K/k) \cong \Z/2\Z$, and let $\sigma: \Gal_k \to G$ be the canonical surjection. 
\begin{defn}

Let $X/k$ be a quasi-projective variety. Give $X \amalg X$ a $G$ action, given by permuting the copies for $X$. We will write $X \amalg^G X$ to mean this variety along with its $G$ action. So we may write $[X \amalg^G X] \in K_0^G(\mathrm{Var}_k)$ and moreover the element $[X \amalg^G X]$ lies over $[X \amalg X]$ under the forgetful map $K_0^G(\mathrm{Var}_k) \to K_0(\mathrm{Var}_k)$. 

Similarly $X \times_k X$ also has a canonical $G$ action, given by permuting the co-ordinates. We write $X \times_k^G X$ to mean the $G$-variety given by this. Write $[X \times_k^G X] \in K_0^G(\mathrm{Var}_k)$, so that $[X \times_k^G X]$ lies over $[X \times_k X]$ under the forgetful map.

Note, if $C = [X]-[Y] \in K_0(\mathrm{Var}_k)$, where $X,Y$ are as above, then we can use the above to also define $C \amalg^G C := [X \amalg^G X] - [Y \amalg^G Y]$. 

Similarly, define $C \times^G_k C := [X \times^G_k X] + [Y \times^G_k Y] - [ (X \times_k Y) \amalg^G (X \times_k Y)]$, so that we can define these operations for any element of $K_0(\mathrm{Var}_k)$. 
\end{defn}

\begin{lemma}\label{symmGX}
Suppose that $\mathrm{char}(k) = 0$ (resp. $\mathrm{char}(k) \neq 2$), and let $X \in K_0(\mathrm{Var}_k)$ (resp. $X \in K_0(\Et_k)$). We can compute $[(X \amalg^G X)^{(n)}] \in K_0^G(\mathrm{Var}_k)$ as follows:
\begin{equation*}
(X \amalg^G X)^{(n)}  = \begin{cases}
\sum_{i=0}^{\frac{n-1}{2}} (X^{(i)} \times_k X^{(n-i)}) \amalg^G ( X^{(i)} \times_k X^{(n-i)}) &\text{ $n$ odd}\\
\sum_{i=0}^{\frac{n-2}{2}} (X^{(i)} \times_k X^{(n-i)}) \amalg^G ( X^{(i)} \times_k X^{(n-i)}) +  \left( X^{(\frac{n}2)} \times_k^G X^{(\frac{n}2)} \right)&\text{ $n$ even}.
\end{cases}
\end{equation*}
\end{lemma}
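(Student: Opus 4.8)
We must compute $[(X \amalg^G X)^{(n)}] \in K_0^G(\mathrm{Var}_k)$, where $X \amalg^G X$ carries the $\Z/2\Z$-action swapping the two copies of $X$, the claimed formula splitting according to the parity of $n$ into a sum of terms $(X^{(i)} \times_k X^{(n-i)}) \amalg^G (X^{(i)} \times_k X^{(n-i)})$ (the $G$-action swapping the two factors of the disjoint union, combining the swap on the two copies with the transposition $i \leftrightarrow n-i$) plus, when $n$ is even, the extra term $X^{(n/2)} \times_k^G X^{(n/2)}$.

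Let me think about how this works combinatorially. We have $Y := X \amalg X$, two copies of $X$ labelled, say, by $\{0,1\}$. Then $Y^{(n)} = Y^n / S_n$. An $n$-fold point of $Y$ is a multiset of $n$ points of $Y$, i.e. we can split it into those lying in copy $0$ and those lying in copy $1$; if $i$ of them lie in copy $0$ and $n-i$ in copy $1$, the data is an element of $X^{(i)} \times X^{(n-i)}$. So as a variety (forgetting $G$) we have $Y^{(n)} = \coprod_{i=0}^n X^{(i)} \times X^{(n-i)}$, which is Macdonald's formula. The $G$-action swaps copies $0$ and $1$, hence sends the $i$-th piece $X^{(i)} \times X^{(n-i)}$ to the $(n-i)$-th piece $X^{(n-i)} \times X^{(i)}$, by the obvious isomorphism (swap the two factors). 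So pairing up $i$ with $n-i$: for $i \ne n-i$ we get a $G$-orbit of two components, which is precisely $(X^{(i)} \times X^{(n-i)}) \amalg^G (X^{(i)} \times X^{(n-i)})$ — running $i$ from $0$ to $\lfloor (n-1)/2 \rfloor$ hits each such pair once. When $n = 2m$ is even there is the self-paired piece $i = m$: the component $X^{(m)} \times X^{(m)}$ is fixed setwise by $G$, which acts by swapping the two factors — that's exactly $X^{(m)} \times_k^G X^{(m)}$.

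**Plan of proof.** The natural approach mirrors the biquadratic lemma earlier: reduce to a statement about finite $G$-sets / $\kbar$-points, since everything in sight is handled $\Gal_k$-equivariantly and the relevant schemes are determined by their functors of points on $\kbar$ together with the $\Gal_k$- and $G$-actions. Concretely: (1) First reduce to varieties $X$ that are actual varieties (not virtual classes) — the formula for virtual classes $C = [X] - [Y]$ follows formally from the variety case by the bilinearity built into the definitions of $\amalg^G$ and $\times^G_k$, plus the fact that symmetric powers satisfy $(X \amalg Y)^{(n)} = \coprod_i X^{(i)} \times Y^{(n-i)}$ (Macdonald). (2) For $X$ a variety, pass to $\overline{X} := X_{\kbar}$ with its $\Gal_k$-action; then $(X \amalg^G X)^{(n)}$ is determined by the $\Gal_k \times G$-set structure on $\overline{X}^{(n)}$ — really just by the $S_n$-orbits of $(\overline{X} \amalg \overline{X})^n$, on which $\Gal_k$ acts diagonally, $G$ acts by the swap, and these commute. (3) Now run the combinatorial argument above: decompose an $n$-multiset of $\overline{X} \amalg \overline{X}$ by how many points sit in each copy, giving the $\Gal_k \times G$-equivariant decomposition $\overline{(X\amalg^G X)}^{(n)} \cong \coprod_{i=0}^n \overline{X}^{(i)} \times \overline{X}^{(n-i)}$ with $G$ sending piece $i$ to piece $n-i$ by the flip; then group the pieces into $G$-orbits and translate $\{$piece $i$, piece $n-i\}$ back to $(X^{(i)}\times X^{(n-i)}) \amalg^G (X^{(i)}\times X^{(n-i)})$ and the fixed middle piece to $X^{(n/2)} \times^G_k X^{(n/2)}$. (4) Descend back from $\kbar$ to $k$: each of these $G$-equivariant $\Gal_k$-set identifications comes from an actual isomorphism of $G$-varieties over $k$, because all the schemes involved are finite over $k$ in the $0$-dimensional case — or more generally because $(X \amalg X)^{(n)}$, $X^{(i)} \times X^{(n-i)}$ etc.\ are canonically built from $X$ and the identifications are canonical.

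**Where the work is.** The combinatorial heart (step 3) is essentially Macdonald's identity made $G$-equivariant, and is not hard once set up carefully; the genuine care is needed in step 4, i.e.\ checking that the $\Gal_k$-equivariant bijections of $\kbar$-points really are induced by morphisms of $k$-schemes compatible with the $G$-actions — in particular, that "$G$ sends the $i$-th summand to the $(n-i)$-th summand by swapping the two factors of the product" is an honest statement about the $G$-action on $(X \amalg^G X)^{(n)}$ and not merely about $\kbar$-points. I expect this to be routine but is the step most prone to sloppiness: one should exhibit the relevant $k$-isomorphism $(X \amalg^G X)^{(n)} \xrightarrow{\sim} \bigl(\coprod_{i=0}^n X^{(i)} \times_k X^{(n-i)}\bigr)$ directly and then observe that the swap automorphism of $X \amalg X$ induces, on $\coprod_i X^{(i)} \times X^{(n-i)}$, exactly the permutation of summands $i \mapsto n-i$ composed with the factor-swap $X^{(i)} \times X^{(n-i)} \xrightarrow{\sim} X^{(n-i)} \times X^{(i)}$. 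Grouping summands into $G$-orbits then yields the two cases in the statement, since for $n$ odd no summand is fixed (there is no middle index) while for $n = 2m$ even precisely the summand $i = m$ is $G$-stable with $G$ acting by the transposition of the two $X^{(m)}$ factors, which is the definition of $X^{(m)} \times^G_k X^{(m)}$.
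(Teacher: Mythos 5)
Your proposal is correct and lands on exactly the same core argument as the paper: reduce to honest varieties using the fact that $(-)^{(n)}$ is a power structure on $K_0^G(\mathrm{Var}_k)$, apply the decomposition $(X\amalg Y)^{(n)}\cong\coprod_{i=0}^{n}X^{(i)}\times_k Y^{(n-i)}$, observe that the swap involution carries the $i$-th summand to the $(n-i)$-th by the factor flip, and group summands into $G$-orbits, with the middle summand $i=n/2$ appearing only for $n$ even and giving the $\times_k^G$ term. The one difference in presentation is that you spend most of the plan routing through $\kbar$-points, which is fine for $0$-dimensional $X$ but, as you yourself note in the final paragraph, does not on its own produce an isomorphism of $G$-varieties over $k$ for general $X$; the paper skips that detour and works directly with the $k$-scheme isomorphism and the $G$-action on it, which is the cleaner version of what your last paragraph describes.
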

\begin{proof}
We prove the result for $n$ even, with the case $n$ odd being omitted as it is simpler. We will also only prove the argument in the characteristic $0$ case, with the characteristic $p$ case also following identically.

Since $(-)^{(n)}$ defines a power structure on $K^G_0(\mathrm{Var}_k)$, it is sufficient to prove the above statement for $X$ an actual variety, rather than a class in $K_0(\mathrm{Var}_k)$.

In order to make the $G$ action clear, let $Y \cong X$ so that $X \amalg^G X \cong X \amalg Y$ where the $G$ action swaps $X$ and $Y$. Using the formula for $(X \amalg Y)^{(n)}$, we obtain
$$
(X \amalg Y)^{(n)} = \coprod_{i=0}^n X^{(i)} \times_k Y^{(n-i)},
$$
where the $G$ action sends the copy of $X^{(i)}$ isomorphically onto the copy of $Y^{(i)}$, then so it sends $X^{(i)} \times_k Y^{(n-i)}$ isomorphically onto the copy of $X^{(n-i)}\times Y^{(i)}$.  Therefore for $i \neq \frac{n}{2}$, the variety
$$
(X^{(i)} \times_k Y^{(n-i)}) \amalg (X^{(n-i)} \times_k Y^{(i)})
$$ 
is a closed $G$ invariant subvariety of $(X \amalg Y)^{(n)}$, and there is an isomorphism of $G$ varieties
$$
(X^{(i)} \times_k Y^{(n-i)}) \amalg (X^{(n-i)} \times_k Y^{(i)}) \cong ( X^{(i)} \times_k X^{(n-i)}) \amalg^G (X^{(i)} \times_k X^{(n-i)}).
$$
For $i = \frac{n}{2}$, $X^{(\frac{n}{2})} \times_k Y^{(\frac{n}{2})}$ is $G$ invariant and the $G$ action swaps $X$ and $Y$, so there is an isomorphism of $G$ varieties
$$
X^{(\frac{n}{2})} \times_k Y^{(\frac{n}{2})} \cong X^{(\frac{n}{2})} \times^G_k X^{(\frac{n}{2})}.
$$
Putting this together gives the result as required. 
\end{proof}

\begin{lemma}\label{twistX}
Let $\mathrm{char}(k)=0$ (resp. $\mathrm{char}(k) \neq 2)$ and let $X \in K_0(\mathrm{Var}_k)$ (resp. $K_0(\Et_k)$). Then there is an equality in $K_0(\mathrm{Var}_k)$
$$
\ \! ^\sigma \! (X \amalg^G X) = [X] \cdot [\Spec(K)].
$$ 
\end{lemma}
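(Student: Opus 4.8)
The plan is to reduce to the case of an honest variety and then identify the twist by unwinding the Galois descent defining $^\sigma(-)$. By Theorem A.6 of \cite{YZ} the twisting operation extends to a ring homomorphism $K_0^G(\mathrm{Var}_k) \to K_0(\mathrm{Var}_k)$, and both sides of the claimed identity are additive in $X \in K_0(\mathrm{Var}_k)$: the right-hand side obviously, and the left-hand side because $C \amalg^G C$ is additive in $C$ by definition, so $^\sigma(-)$ applied to the class $[X \amalg^G X]$ is additive in $[X]$. Hence it suffices to prove the statement when $X$ is an actual variety.

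So fix a variety $X/k$. The first step is the observation that, as a $G$-variety, $X \amalg^G X \cong X \times_k P$, where $P := \Spec(k) \amalg^G \Spec(k)$ is the two-point $k$-scheme on which $G$ acts by swapping the two points, and $X$ carries the trivial $G$-action: indeed $X \times_k P$ is the disjoint union of two copies of $X$ indexed by the two points of $P$, and the diagonal action of $G$ (trivial on $X$) permutes these two copies, which is precisely the $G$-action on $X \amalg^G X$. Consequently $[X \amalg^G X] = [X] \cdot [P]$ in $K_0^G(\mathrm{Var}_k)$, where $[X]$ denotes the class of $X$ equipped with the trivial $G$-action. Applying the ring homomorphism $^\sigma(-)$ gives $^\sigma(X \amalg^G X) = {}^\sigma X \cdot {}^\sigma P$ in $K_0(\mathrm{Var}_k)$, so it remains to identify the two factors. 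For the first, $^\sigma X = [X]$: the twisted $\Gal_k$-action on $X_{\kbar}$ is obtained from the canonical one by composing with $\sigma$ and the $G$-action, and the latter being trivial, the twisted action agrees with the canonical one, so the descent datum returns $X$ itself. For the second, $^\sigma P = [\Spec(K)]$: the set $P(\kbar)$ is the two-element $\Gal_k$-set on which $\Gal_k$ acts through $\sigma\colon \Gal_k \twoheadrightarrow G$ by swapping, which is exactly the $\Gal_k$-set $\Hom_k(K,\kbar) = \Spec(K)(\kbar)$, so the descent datum defining $^\sigma P$ is that of $\Spec(K)$. Combining these, $^\sigma(X \amalg^G X) = [X] \cdot [\Spec(K)]$, as claimed.

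The only genuinely substantive point is the identification of $^\sigma P$ with $\Spec(K)$ (and the triviality of the twist for trivial $G$-action), which amounts to matching the definition of $^\sigma(-)$ from the appendix of \cite{YZ} against the standard dictionary between $\Gal_k$-sets and \'etale $k$-algebras; I expect no real obstacle here, only careful bookkeeping of conventions. Alternatively one can bypass the ring structure of $K_0^G(\mathrm{Var}_k)$ altogether and argue directly: the base change $(X \amalg^G X)_{\kbar}$ equipped with its twisted $\Gal_k$-action is $\Gal_k$-equivariantly isomorphic to $X_{\kbar} \times_{\kbar} \Spec(K)_{\kbar}$, since a $\gamma \in \Gal_k$ with $\sigma(\gamma)$ nontrivial swaps the two copies of $X_{\kbar}$ while acting canonically, and this descends to $X \times_k \Spec(K)$, whose class is $[X]\cdot[\Spec(K)]$ by the multiplicativity relation in $K_0(\mathrm{Var}_k)$.
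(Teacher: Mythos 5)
Your proof is correct and follows essentially the same route as the paper's: the paper also reduces to the variety case via Theorem A.6 of \cite{YZ} (the ring homomorphism property of the twist) and then invokes Galois descent, which is precisely what your direct alternative argument at the end makes explicit. Your main argument (factoring $X \amalg^G X \cong X \times_k P$ and twisting each factor separately) is an equivalent, slightly more structured way of packaging that same descent.
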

\begin{proof}
For $X$ the class of a quasi-projective variety (resp. finite étale algebra) this follows immediately by Galois descent, and for $X$ a general class in $K_0(\mathrm{Var}_k)$ (resp. $K_0(\Et_k)$), this follows by the variety (resp. étale algebra) case and Theorem 3.7 of \cite{YZ}.
\end{proof}

\begin{lemma}\label{symmtwist}
Suppose $\mathrm{char}(k) = 0$ (resp. $\mathrm{char}(k) \neq 2)$. Let $X \in K_0(\mathrm{Var}_k)$ (resp. $K_0(\Et_k)$). Then there is an equality in $K_0(\mathrm{Var}_k)$ (resp. $K_0(\Et_k)$)  
$$
\ \! ^\sigma \! ((X\amalg^G X)^{(n)}) \cong ( \ \! ^\sigma \! (X\amalg^GX))^{(n)}.
$$ 
\end{lemma}
\begin{proof}
Since $(-)^{(n)}$ defines a power structure on $K_0^G(\mathrm{Var}_k)$ (resp. $K_0(\Et_k)$), it is sufficient to prove this lemma for $X$ an actual quasi-projective variety (resp. finite étale algebra).
Consider $Y=(X \amalg^G X)^n$. This has a natural action of $G$, via the diagonal action, and a natural action of $S_n$ by permuting the factors. Moreover, these actions commute. We obtain the result since
$$(\ \! ^\sigma \! (X\amalg^GX))^{(n)} = (\ \! ^\sigma\! Y)/S_n= \ \! ^\sigma \! (Y/S_n) = \ \! ^\sigma \! ((X\amalg^GX)^{(n)}).$$ 
\end{proof}

We would like to mimic the above for elements of $\widehat{\mathrm{W}}(k)$. That is, for $q \in \widehat{\mathrm{W}}(k)$, we would like to define $q +^G q$ and $q \times^G q$, elements of $\widehat{\mathrm{W}}^G(k)$, that play the role of $X \amalg^G X$ and $X \times_k^G X$.

\begin{defn}
Let $V$ be a vector space with a quadratic form. There is an canonical $G$ action on $(V \oplus V)$ (resp. $V \otimes V$) that makes the canonical quadratic form $G$ invariant, where the non trivial element of $G$ permutes the copies of $V$. Write $V \oplus^G V$ (resp. $V \otimes^G V$) for these vector spaces with $G$-invariant quadratic forms. Now let $q \in \widehat{\mathrm{W}}(k)$, so that $q = [V] - [W]$ where $V,W$ are quadratic forms over $k$.  Define the following elements of $\widehat{\mathrm{W}}^G(k)$:
\begin{align*}
q +^G q& :=  [V \oplus^G V] - [W \oplus^G W],\\
q \times^G q &:= [V \otimes^G V] + [W \otimes^G W] - [ (V \otimes W) \oplus^G (V \otimes W)].
\end{align*}
Note that $q +^G q$ (resp. $q \times^G q$) is an element of $\widehat{\mathrm{W}}^G(k)$ lying over $q+q$ (resp. $q^2$) under the forgetful map $\widehat{\mathrm{W}}^G(k) \to \widehat{\mathrm{W}}(k)$. 
\end{defn}
\begin{lemma}
For all $q, q' \in \widehat{\mathrm{W}}(k)$, the operations $+^G$ and $\times^G$ satisfy the following:
\begin{enumerate}
\item $(q+q') +^G (q+q') = (q+^Gq) + (q'+^Gq')$.
\item $(q + q') \times^G (q+q') = (q \times^G q) + (q' \times^G q') + (qq' +^G qq')$.
\end{enumerate}
\end{lemma}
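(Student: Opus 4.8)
The plan is to reduce both identities to two $G$-equivariant isometries of honest quadratic forms and then to pass to $\widehat{\mathrm{W}}^G(k)$ by group completion. Fix presentations $q = [V_1] - [W_1]$ and $q' = [V_2] - [W_2]$, so that $q + q' = [V_1 \oplus V_2] - [W_1 \oplus W_2]$. Unwinding the definitions of $+^G$ and $\times^G$, both sides of (1) and of (2) become sums and differences of classes of $G$-forms obtained by applying $\oplus^G$ or $\otimes^G$ to orthogonal direct sums of the $V_i$ and $W_j$; so it suffices to record how these two operations behave on a direct sum. Concretely, for quadratic forms $U_1, U_2$ over $k$ I claim the $G$-equivariant isometries
$$(U_1 \oplus U_2) \oplus^G (U_1 \oplus U_2) \;\cong\; (U_1 \oplus^G U_1) \oplus (U_2 \oplus^G U_2)$$
and
$$(U_1 \oplus U_2) \otimes^G (U_1 \oplus U_2) \;\cong\; (U_1 \otimes^G U_1) \oplus (U_2 \otimes^G U_2) \oplus \big((U_1 \otimes U_2) \oplus^G (U_1 \otimes U_2)\big).$$
The first follows by reordering the four summands of $(U_1 \oplus U_2) \oplus (U_1 \oplus U_2)$ from $U_1, U_2, U_1, U_2$ to $U_1, U_1, U_2, U_2$, after which the involution that swaps the two copies acts block-diagonally. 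The second follows from the orthogonal decomposition $(U_1 \oplus U_2)^{\otimes 2} = \bigoplus_{i,j} U_i \otimes U_j$ (tensor distributes over orthogonal sums), on which the factor-swapping involution sends $U_i \otimes U_j$ to $U_j \otimes U_i$: on $U_1 \otimes U_1$ and $U_2 \otimes U_2$ this is exactly the $\otimes^G$-action, while on $U_1 \otimes U_2 \oplus U_2 \otimes U_1$, after the canonical isometry $U_2 \otimes U_1 \cong U_1 \otimes U_2$, it becomes the swap of the two copies of $U_1 \otimes U_2$, i.e.\ the $\oplus^G$-action.

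Granting these, part (1) is immediate: applying the first isometry once to $(V_1, V_2)$ and once to $(W_1, W_2)$ rewrites $(q + q') +^G (q + q')$ as $([V_1 \oplus^G V_1] + [V_2 \oplus^G V_2]) - ([W_1 \oplus^G W_1] + [W_2 \oplus^G W_2]) = (q +^G q) + (q' +^G q')$ in $\widehat{\mathrm{W}}^G(k)$. The same computation also shows that $+^G$ is independent of the chosen presentation of $q$, since $[V] - [W] = [V'] - [W']$ in $\widehat{\mathrm{W}}(k)$ precisely when $V \oplus W' \cong V' \oplus W$; an identical argument using the second isometry gives well-definedness of $\times^G$, which is needed to make sense of the statement in the first place.

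For part (2) I would expand $(q + q') \times^G (q + q') = [V \otimes^G V] + [W \otimes^G W] - [(V \otimes W) \oplus^G (V \otimes W)]$ with $V = V_1 \oplus V_2$ and $W = W_1 \oplus W_2$. The second isometry rewrites the first two terms, while $V \otimes W = \bigoplus_{i,j} V_i \otimes W_j$ together with the first isometry gives $[(V \otimes W) \oplus^G (V \otimes W)] = \sum_{i,j} [(V_i \otimes W_j) \oplus^G (V_i \otimes W_j)]$. Collecting the resulting terms, the pieces involving only $V_1, W_1$ reassemble $q \times^G q$ and those involving only $V_2, W_2$ reassemble $q' \times^G q'$, while the remaining cross terms $[(V_1 \otimes V_2) \oplus^G (V_1 \otimes V_2)] + [(W_1 \otimes W_2) \oplus^G (W_1 \otimes W_2)] - [(V_1 \otimes W_2) \oplus^G (V_1 \otimes W_2)] - [(V_2 \otimes W_1) \oplus^G (V_2 \otimes W_1)]$ are exactly what the first isometry produces from $qq' +^G qq'$, once one writes $qq' = [(V_1 \otimes V_2) \oplus (W_1 \otimes W_2)] - [(V_1 \otimes W_2) \oplus (W_1 \otimes V_2)]$ and uses $V_2 \otimes W_1 \cong W_1 \otimes V_2$. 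This gives the claimed identity.

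The only genuine content is the two $G$-equivariant isometries; the rest is bookkeeping. The step I expect to take the most care is the cross-term analysis in part (2): one has to correctly separate the factor-swap action, which produces $\otimes^G$, on the diagonal blocks $V_i \otimes W_i$ from the copy-swap action, which produces $\oplus^G$, on the off-diagonal blocks, and keep track of the signs coming from the $-[V \otimes W]$-type term in the definition of $\times^G$.
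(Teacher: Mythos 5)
Your proof of the two stated identities is correct and is precisely what the paper's one-line proof (``This follows by expanding $q = [V] - [W]$ and $q' = [V'] - [W']$'') intends. The two $G$-equivariant isometries you isolate are the right structural facts, and the cross-term bookkeeping in part (2) checks out.

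However, the closing claim that ``an identical argument using the second isometry gives well-definedness of $\times^G$'' is incorrect, and since you flag well-definedness as needed for the statement to make sense, this deserves care. For $+^G$ the argument does work: from $V \oplus W' \cong V' \oplus W$, the first isometry yields $[V \oplus^G V] + [W' \oplus^G W'] = [V' \oplus^G V'] + [W \oplus^G W]$, which rearranges to the required equality. For $\times^G$, the parallel application of the second isometry to $(V \oplus W') \otimes^G (V \oplus W') \cong (V' \oplus W) \otimes^G (V' \oplus W)$ gives
$$[V \otimes^G V] + [W' \otimes^G W'] + [(V \otimes W') \oplus^G (V \otimes W')] = [V' \otimes^G V'] + [W \otimes^G W] + [(V' \otimes W) \oplus^G (V' \otimes W)],$$
which is not the relation needed: the $W$-terms appear under $\otimes^G$ on the wrong sides, and no rearrangement repairs this. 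Indeed $\times^G$ genuinely depends on the chosen presentation. Take $q = 0 \in \widehat{\mathrm{W}}(k)$ with the two presentations $[0]-[0]$ and $[\langle 1 \rangle]-[\langle 1 \rangle]$. The first gives $q \times^G q = 0$; the second gives $2[\langle 1 \rangle \otimes^G \langle 1 \rangle] - [(\langle 1 \rangle \otimes \langle 1 \rangle) \oplus^G (\langle 1 \rangle \otimes \langle 1 \rangle)]$. The factor swap acts trivially on the one-dimensional space $\langle 1 \rangle \otimes \langle 1 \rangle$, so the twisting homomorphism $\widehat{\mathrm{W}}^G(k) \to \widehat{\mathrm{W}}(k)$ (for $K = k(\sqrt d)$) sends the first summand to $2\langle 1 \rangle$ and the second (two copies of $\langle 1 \rangle$ with the coordinate swap) to the trace form $\langle 2 \rangle + \langle 2d \rangle$. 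For $d$ a nonsquare these differ in $\widehat{\mathrm{W}}(k)$ (compare discriminants), so the two presentations produce distinct elements of $\widehat{\mathrm{W}}^G(k)$. This does not affect the lemma or its later uses, which only ever evaluate $+^G$ and $\times^G$ on the natural induced presentations $q+q' = [V_1 \oplus V_2] - [W_1 \oplus W_2]$ and $qq' = [(V_1 \otimes V_2) \oplus (W_1 \otimes W_2)] - [(V_1 \otimes W_2) \oplus (W_1 \otimes V_2)]$; that is exactly what your computation establishes, so the proof of the two identities stands, and only the well-definedness remark for $\times^G$ should be withdrawn.
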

\begin{proof}
This follows by expanding $q = [V] - [W]$ and $q' = [V'] - [W']$.
\end{proof}

\begin{lemma}\label{+GxG}
Suppose that $\mathrm{char}(k) = 0$ (resp. $\mathrm{char}(k) \neq 2)$, and let $X \in K_0(\mathrm{Var}_k)$ (resp. $K_0(\Et_k)$) be such that $\chi^{mot}(X) = q \in \widehat{\mathrm{W}}(k)$. Then the $G$ equivariant motivic Euler characteristic $\chi_G^{mot}( X \amalg^G X) = q +^G q$. Similarly, $\chi_G^{mot}( X \times^G X) = q \times^G q$. 
\end{lemma}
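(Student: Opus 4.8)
The idea is that $X \amalg^G X$ is an \emph{induced} $G$-variety, so its equivariant Euler characteristic is forced by multiplicativity of $\chi^{mot}_G$, whereas $X \times_k^G X$ is a quadratic construction whose polarisation is an induced one; this reduces the second identity to the first together with an explicit de Rham computation on smooth projective varieties.

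\textbf{The $\amalg^G$ identity.} Let $[G]\in K_0^G(\mathrm{Var}_k)$ be the class of $\Spec(\prod_{g\in G}k)$ with $G$ permuting the factors (here $\Spec(k\times k)$ with $G$ swapping), and let $\iota\colon K_0(\mathrm{Var}_k)\to K_0^G(\mathrm{Var}_k)$ and $\iota'\colon\widehat{\mathrm{W}}(k)\to\widehat{\mathrm{W}}^G(k)$ be the ring homomorphisms equipping a variety, resp. a quadratic form, with the trivial $G$-action. For a variety $X$ there is an isomorphism of $G$-varieties $X\amalg^G X\cong [G]\times_k\iota(X)$, and so, by the definition of $C\amalg^G C$ for $C=[X]-[Y]$, one gets $C\amalg^G C=[G]\cdot\iota(C)$ in $K_0^G(\mathrm{Var}_k)$ for every $C$. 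Since $\chi^{mot}_G$ is a ring homomorphism with $\chi^{mot}_G\circ\iota=\iota'\circ\chi^{mot}$ (immediate from the construction, trivial actions going to trivial actions), and since $[G]$ is $0$-dimensional so that $\chi^{mot}_G([G])$ is the equivariant trace form of $k\times k$, namely $\langle 1\rangle\oplus^G\langle1\rangle=\langle1\rangle+^G\langle1\rangle$, we obtain
\[
\chi^{mot}_G(X\amalg^G X)=\big(\langle1\rangle+^G\langle1\rangle\big)\cdot\iota'(q).
\]
Unwinding the definitions with $q=[V]-[W]$ gives $(\langle1\rangle+^G\langle1\rangle)\cdot\iota'([V])=[V\oplus^G V]$, so the right-hand side equals $q+^G q$, as wanted.

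\textbf{The $\times^G$ identity.} By the defining formula $C\times^G_k C=[X\times^G_k X]+[Y\times^G_k Y]-[(X\times_k Y)\amalg^G(X\times_k Y)]$ for $C=[X]-[Y]$, and its analogue for $q\times^G q$, the maps $X\mapsto\chi^{mot}_G(X\times^G_k X)$ and $q\mapsto q\times^G q$ are quadratic, with polarisations $(X_1,X_2)\mapsto\chi^{mot}_G\big((X_1\times_k X_2)\amalg^G(X_1\times_k X_2)\big)$ and $(q_1,q_2)\mapsto(q_1q_2)+^G(q_1q_2)$ respectively (the latter by the lemma just above). The $\amalg^G$ identity already proved, applied to $X_1\times_k X_2$, together with multiplicativity of $\chi^{mot}$, shows these polarisations are intertwined by $\chi^{mot}$. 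A quadratic map out of $K_0(\mathrm{Var}_k)$ is determined by its polarisation and its values on any set of generators of the underlying group, and since $\mathrm{char}(k)=0$ a standard fact (due to Bittner) gives such a generating set consisting of classes of smooth projective varieties. It thus remains to verify $\chi^{mot}_G(X\times_k^G X)=\chi^{mot}(X)\times^G\chi^{mot}(X)$ for $X$ smooth projective, where both sides are read off the $G$-equivariant de Rham cohomology of $X\times_k X$; by Künneth this is $H^*_{dR}(X/k)\otimes_k H^*_{dR}(X/k)$ with $G$ acting by the graded transposition and with the tensor product of the cup-product--trace forms, matching the definition of $q\times^G q$ term by term.

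\textbf{The main obstacle.} The delicate point is the Koszul sign in this last step: on $H^a_{dR}(X)\otimes H^b_{dR}(X)$ with $a,b$ odd the $G$-action is a \emph{signed} transposition, and on $H^a_{dR}(X)\otimes H^a_{dR}(X)$ with $a$ odd it exchanges the roles of $\mathrm{Sym}^2$ and $\Lambda^2$; one must check that, after passing to $\widehat{\mathrm{W}}^G(k)$ where all odd-degree pieces collapse to equivariant hyperbolic summands, the signed and unsigned conventions yield the same class. In the intended application only $0$-dimensional $X$ occur, for which there are no odd degrees and this subtlety disappears — both sides are then literally equivariant trace forms. A routine secondary point is that $q\mapsto q+^G q$ and $q\mapsto q\times^G q$ are well defined independently of the representative $q=[V]-[W]$, which follows from the relations defining $\widehat{\mathrm{W}}^G(k)$.
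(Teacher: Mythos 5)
Your proof is correct, and it takes a genuinely different route from the paper's for the first identity: you recognise $X\amalg^G X$ as $[G]\cdot\iota(X)$ and invoke multiplicativity of $\chi^{mot}_G$ together with the computation $\chi^{mot}_G([G])=\langle1\rangle+^G\langle1\rangle$, whereas the paper just writes down the $G$-equivariant de Rham cohomology of $X\amalg Y$ directly for $X$ smooth proper and then reduces the general case by linearity. Your argument is cleaner and more conceptual here; the paper's is more hands-on but makes the de Rham structure explicit, which it then reuses for the $\times^G$ case. For the second identity your quadratic-maps reduction is, at bottom, the same reduction the paper carries out by writing $[X]=[Z]-[U]$ with $Z,U$ smooth proper and expanding both $C\times^G C$ and $q\times^G q$ termwise; you have repackaged this into the statement that two quadratic maps with matching polarisations (the polarisation being exactly the $\amalg^G$ identity applied to a product) agree once they agree on a generating set, and then appealed to Bittner. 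Either formulation works, and neither is substantially shorter.

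Your Koszul-sign observation is genuine and not addressed in the paper. The paper asserts that under the K\"unneth isomorphism ``the $G$ action swaps the factors'' and then reads off $[V\otimes^G V]+[W\otimes^G W]-[(V\otimes W)\oplus^G(V\otimes W)]$, but the swap on $H^a(X)\otimes H^a(X)$ with $a$ odd is the signed transposition, which has $\Lambda^2$ as its $(+1)$-eigenspace rather than $\mathrm{Sym}^2$; this is not isomorphic as a $G$-representation to the unsigned $W\otimes^G W$ used in the definition of $q\times^G q$, so for higher-dimensional $X$ the two sides of the claimed equality could in principle differ in $\widehat{\mathrm{W}}^G(k)$. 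You are right that this collapses trivially when $X$ is $0$-dimensional (no odd-degree cohomology), which is the only case actually used downstream (Corollary~\ref{MQRes} only ever feeds in multiquadratic \'etale algebras). So this is a caveat about the stated generality of the lemma, not a gap in the final theorem. One minor point you could also flag, as you do for $q\mapsto q\times^G q$: the operation $C\mapsto C\times^G_k C$ on $K_0(\mathrm{Var}_k)$ is defined via a chosen presentation $C=[X]-[Y]$, and its independence of that choice is taken for granted in the paper as well; it follows from the same kind of expansion you use, but is worth a sentence.
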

\begin{proof}
We will prove the argument for the statement in characteristic $0$, with the characteristic not $2$ case being strictly easier, replacing ``smooth and projective'' with ``the class of a finite étale algebra''.

As in Lemma $\ref{symmGX}$, let $Y$ be a variety such that $X \cong Y$, so $X \amalg^G X = X \amalg Y$ where the $G$ action swaps $X$ and $Y$, and similarly for $X \times^G X$. First, suppose that $X$ is smooth and projective. Let $V = H^{2*}_{dR}(X)$ and $W= H^{2*+1}_{dR}(X)$. As in Lemma 3.4 of \cite{YZ}, we can give $H^{2*+1}_{dR}(X)$ a hyperbolic quadratic form induced by Poincaré duality. 

Note that $X \amalg Y$ is smooth and projective, and $G$ acts on $H^*_{dR}(X \amalg Y) \cong H^*_{dR}(X) \oplus H^*_{dR}(Y)$ by swapping the two isomorphically. Therefore, we see $\chi^{dR}(X \amalg^G Y) = q +^G q$, as required. 

For $X \times^G Y$, the K{\"u}nneth formula gives us $H^*_{dR}(X \times^G Y) \cong H^*_{dR}(X) \otimes H^*_{dR}(Y)$, and the $G$ action swaps the factors. By directly calculating, we can compute
\begin{align*}
\chi^{dR}_G(X \times^G X) &= [H^{2*}_{dR}(X \times^G Y)] - [H^{2*+1}_{dR}(X \times^G Y)] \\&=  [V \otimes^G V] + [W \otimes^G W] - [ (V \otimes W) \oplus^G (V \otimes W)] = q \times^G q.
\end{align*}

For general $X$, Theorem 3.1 of \cite{Bi} tells us that we can write $[X] = [Z]-[U]$, where $Z,U$ are both smooth, projective, varieties. Let $q_Z = \chi^{mot}(Z)$, and $q_U = \chi^{mot}(U)$, so that $q= q_Z -q_U$. We therefore see that
$$
X \amalg^G X = [Z \amalg^G Z] - [U \amalg^G U],
$$ 
so $\chi^{mot}_G(X \amalg^G X) = (q_Z +^G q_Z) - (q_U +^G q_U) = q+^Gq$ as required. Similarly we see
$$
X \times^G X = [Z \times^G Z] + [U \times^G U] - [ (Z \times U) \amalg^G (Z \times U)].
$$ 
Applying $\chi^{mot}_G$ gives $\chi^{mot}_G (X \times^G X) = [q_Z \times^G q_Z] + [q_U \times^G q_U] - [q_Zq_U +^G q_Zq_U]=q \times^G q$, as required. 
\end{proof}

Note that for $q \in \widehat{\mathrm{W}}^G(k)$ there is no canonical way to give a $G$-action to $a_n(q)$ to obtain an element of $\widehat{\mathrm{W}}^G(k)$. However in specific cases we may define a $G$ action.
\begin{defn}
Let $q \in \widehat{\mathrm{W}}(k)$, and consider $a_n(q+q) = \sum_{i=0}^n a_i(q)a_{n-i}(q)$. Writing $a_n(q+q)$ in this way allows us to define a natural element $a_n^G(q+^Gq) \in \widehat{\mathrm{W}}^G(k)$ that lies over $a_n(q+q)$ under the forgetful map. For $n$ odd, this is given by the $G$ action as described above on:
$$
a_n^G(q+^Gq) :=  \sum_{i=0}^{\frac{n-1}{2}} (a_i(q)a_{n-i}(q) +^G a_{n-i}(q)a_i(q))
$$
 For $n$ even, we define
$$
a_n^G(q+^Gq) := \sum_{i=0}^{\frac{n-2}{2}} (a_i(q)a_{n-i}(q) +^G a_{n-i}(q)a_i(q)) +   ( a_{ \frac{n}{2}}(q)\times^G a_{\frac{n}{2}}(q)).
$$
\end{defn}
Note that $a_n^G$ is just notation, not a function. For a general element, $x \in \widehat{\mathrm{W}}^G(k)$, we can't define $a_n^G(x)$, only $a_n^G(q+^Gq)$ for $q \in \widehat{\mathrm{W}}(k)$. In particular, we do not have an analogy of Lemma $\ref{equivariantpstruc}$ for $\widehat{\mathrm{W}}^G(k)$. The $G$ action we prescribe is only possible through the symmetry of $a_n(q+q)$. The reason we have defined this $a_n^G$ are the following two theorems that show that $a_n^G$ is compatible with much of the earlier structure.

\begin{lemma}\label{ang}
Let $\mathrm{char}(k) = 0$ (resp. $\mathrm{char}(k) \neq 2$) $X\in K_0(\mathrm{Var}_k)$ (resp. $K_0(\Et_k)$) be such that $\chi^{mot}(X^{(n)}) = a_n(\chi^{mot}(X)) \in \widehat{\mathrm{W}}(k)$ for all $n$. Then there is an equality in $\widehat{\mathrm{W}}^G(k)$
$$
\chi^{mot}_G\left( (X \amalg^G X)^{(n)}\right) = a_n^G\left( \chi^{mot}(X) +^G \chi^{mot}(X)\right).
$$
\end{lemma}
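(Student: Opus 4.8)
The plan is to push the equivariant motivic Euler characteristic through the explicit description of $(X \amalg^G X)^{(n)}$ in $K_0^G(\mathrm{Var}_k)$ provided by Lemma \ref{symmGX}, and then to evaluate each summand using Lemma \ref{+GxG}. Write $q = \chi^{mot}(X)$. Since $\chi_G^{mot}\colon K_0^G(\mathrm{Var}_k) \to \widehat{\mathrm{W}}^G(k)$ is a ring homomorphism (appendix of \cite{YZ}), in particular it is additive, so applying it to the formula of Lemma \ref{symmGX} reduces the computation of $\chi_G^{mot}\big((X \amalg^G X)^{(n)}\big)$ to computing $\chi_G^{mot}$ of each of the terms appearing there.

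First I would handle the terms of the shape $(X^{(i)} \times_k X^{(n-i)}) \amalg^G (X^{(i)} \times_k X^{(n-i)})$. Setting $Y = X^{(i)} \times_k X^{(n-i)} \in K_0(\mathrm{Var}_k)$, Lemma \ref{+GxG} gives $\chi_G^{mot}(Y \amalg^G Y) = \chi^{mot}(Y) +^G \chi^{mot}(Y)$. Because $\chi^{mot}$ is a ring homomorphism and, by hypothesis, $\chi^{mot}(X^{(j)}) = a_j(q)$ for every $j$, we get $\chi^{mot}(Y) = a_i(q)\,a_{n-i}(q)$, so this summand contributes $a_i(q)a_{n-i}(q) +^G a_i(q)a_{n-i}(q)$. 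By commutativity of $\widehat{\mathrm{W}}(k)$ this equals $a_i(q)a_{n-i}(q) +^G a_{n-i}(q)a_i(q)$, which is precisely the $i$-th term in the definition of $a_n^G(q +^G q)$. Running $i$ over $0,\dots,\tfrac{n-1}{2}$ (for $n$ odd) or over $0,\dots,\tfrac{n-2}{2}$ (for $n$ even) matches the index range in that definition.

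For $n$ even there remains the extra term $X^{(n/2)} \times_k^G X^{(n/2)}$. Applying the $\times^G$ half of Lemma \ref{+GxG} with $Y = X^{(n/2)}$ yields $\chi_G^{mot}(Y \times_k^G Y) = \chi^{mot}(Y) \times^G \chi^{mot}(Y) = a_{n/2}(q) \times^G a_{n/2}(q)$, which is exactly the remaining term in $a_n^G(q+^Gq)$. Summing the contributions in the odd and even cases respectively reproduces verbatim the defining expressions for $a_n^G\big(\chi^{mot}(X) +^G \chi^{mot}(X)\big)$, giving the claimed equality in $\widehat{\mathrm{W}}^G(k)$.

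I do not anticipate any serious obstacle: all the real content sits in Lemmas \ref{symmGX} and \ref{+GxG}, and the remaining work is the bookkeeping of matching index ranges and using the symmetrization $a_i(q)a_{n-i}(q) = a_{n-i}(q)a_i(q)$ against the definition of $a_n^G$. The only points that need minimal care are that Lemma \ref{+GxG} must be invoked for the composite class $Y = X^{(i)} \times_k X^{(n-i)} \in K_0(\mathrm{Var}_k)$ rather than for $X$ itself, and that the hypothesis $\chi^{mot}(X^{(j)}) = a_j(\chi^{mot}(X))$ is used for all $j \le n$ simultaneously.
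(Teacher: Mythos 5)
Your proof is correct and follows essentially the same route as the paper: apply $\chi_G^{mot}$ to the decomposition of $(X \amalg^G X)^{(n)}$ from Lemma~\ref{symmGX}, evaluate the $\amalg^G$ and $\times_k^G$ pieces via Lemma~\ref{+GxG} together with the hypothesis $\chi^{mot}(X^{(j)}) = a_j(q)$, and recognize the result as the definition of $a_n^G(q+^Gq)$. The extra detail you give about computing $\chi^{mot}(X^{(i)} \times_k X^{(n-i)}) = a_i(q)a_{n-i}(q)$ is implicit in the paper's argument but otherwise the two proofs coincide.
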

\begin{proof}
Again, we prove the case when $n$ is even, with the case $n$ odd being easier. Let $q=\chi^{mot}(X)$. Applying $\chi^{mot}_G$ to Lemma $\ref{symmGX}$ gives us
\begin{align*}
\chi^{mot}_G( (X \amalg^G X)^{(n)}) &= \sum_{i=0}^{\frac{n-2}{2}}\chi^{mot}_G\left( [ (X^{(i)} \times_k X^{(n-i)}) \amalg^G ( X^{(i)} \times_k X^{(n-i)})]\right) \\&+\chi^{mot}_G\left( [ X^{(\frac{n}2)} \times_k^G X^{(\frac{n}2)}]\right).
\end{align*}
Lemma $\ref{+GxG}$ computes the right hand side to be
$$
 \sum_{i=0}^{\frac{n-2}{2}} \left[a_i(q)a_{n-i}(q) +^G a_i(q)a_{n-i}(q)\right] + \left[a_{\frac{n}{2}}(q) \times^G a_{\frac{n}{2}}(q)\right],
$$
which is precisely the definition of $a_n^G(q+^Gq)$. 
\end{proof}

\begin{cor}\label{chimotxk}
Let $\mathrm{char}(k) = 0$ (resp. $\mathrm{char}(k) \neq 2$) and let $X \in K_0(\mathrm{Var}_k)$ (resp. $K_0(\Et_k)$) such that $\chi^{mot}(X^{(n)}) = a_n(\chi^{mot}(X))$ for all $n$. Then
$$
\chi^{mot}(X \times_k \Spec(K)) = \ \! ^\sigma \! a_n^G(\chi^{mot}(X) +^G \chi^{mot}(X)).
$$
\end{cor}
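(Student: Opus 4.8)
The plan is to obtain the formula by chaining the lemmas of this subsection, reading the left-hand side as $\chi^{mot}\big((X \times_k \Spec(K))^{(n)}\big)$ with the product taken in $K_0(\mathrm{Var}_k)$. The starting point is Lemma \ref{twistX}, which identifies $X \times_k \Spec(K)$ with the twist $\ \! ^\sigma\!(X \amalg^G X)$ as a class in $K_0(\mathrm{Var}_k)$; here one uses that $\Spec(K)$ is smooth, so $X \times_k \Spec(K)$ is already reduced when $X$ is, and the product relation in $K_0(\mathrm{Var}_k)$ matches the statement of Lemma \ref{twistX} on the nose.

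Next I would apply the $n$-th symmetric power operation to this identity. Since $(-)^{(n)}$ is a genuine power-structure operation on $K_0(\mathrm{Var}_k)$, it descends to classes, so $(X \times_k \Spec(K))^{(n)} = \big(\ \! ^\sigma\!(X \amalg^G X)\big)^{(n)}$, and Lemma \ref{symmtwist} lets me commute the twist past the symmetric power, giving $\big(\ \! ^\sigma\!(X \amalg^G X)\big)^{(n)} = \ \! ^\sigma\!\big((X \amalg^G X)^{(n)}\big)$, where on the right the symmetric power carries the diagonal $G$-action fixed in that lemma.

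Then I would apply $\chi^{mot}$ and invoke the compatibility of the motivic Euler characteristic with Galois twists, Theorem A.13 of \cite{YZ}, which says $\chi^{mot}(\ \! ^\sigma\! Y) = \ \! ^\sigma\!\chi^{mot}_G(Y)$ for any $G$-variety $Y$; taking $Y = (X \amalg^G X)^{(n)}$ yields $\chi^{mot}\big((X \times_k \Spec(K))^{(n)}\big) = \ \! ^\sigma\!\chi^{mot}_G\big((X \amalg^G X)^{(n)}\big)$. Finally, the hypothesis $\chi^{mot}(X^{(m)}) = a_m(\chi^{mot}(X))$ for all $m$ is exactly what is needed to apply Lemma \ref{ang}, which rewrites $\chi^{mot}_G\big((X \amalg^G X)^{(n)}\big)$ as $a_n^G\big(\chi^{mot}(X) +^G \chi^{mot}(X)\big)$ in $\widehat{\mathrm{W}}^G(k)$; substituting gives the claimed equality.

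Since every ingredient is already established, I do not expect a genuine obstacle here: the proof is a formal concatenation. The only points that require care are bookkeeping ones — ensuring throughout that the $G$-action on $(X \amalg^G X)^{(n)}$ is the diagonal one (so that Lemma \ref{symmtwist} and Lemma \ref{ang} apply to the same object), and checking the reducedness remark above so that Lemma \ref{twistX} can be quoted verbatim rather than up to nilpotents.
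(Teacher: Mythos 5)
Your proposal follows the paper's proof step for step: twist via Lemma \ref{twistX}, commute $(-)^{(n)}$ past the twist via Lemma \ref{symmtwist}, apply $\chi^{mot}$ with Theorem A.13 of \cite{YZ}, and finish with Lemma \ref{ang} using the hypothesis. You also correctly read the (mis-typeset) left-hand side as $\chi^{mot}\big((X \times_k \Spec(K))^{(n)}\big)$, which is what the paper's proof actually establishes.
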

\begin{proof}
By Lemma $\ref{twistX}$, we have that $(X \times_k \Spec(K)) = \ \! ^\sigma\!( X \amalg^G X) \in K_0(\mathrm{Var}_k)$.   By Lemma $\ref{symmtwist}$, we obtain $\left(\ \! ^\sigma \! ( X\amalg^G X)\right)^{(n)} = \ \! ^\sigma \! \left( (X \amalg^G X)^{(n)}\right)$, so we have an equality in $K_0(\mathrm{Var}_k)$
$$
(X \times_k \Spec(K))^{(n)} =  \ \! ^\sigma \! \left( (X \amalg^G X)^{(n)}\right).
$$
Applying $\chi^{mot}$ and using Theorem 3.14 of \cite{YZ} gives
$$
\chi^{mot}((X \times_k \Spec(K))^{(n)}) = \ \! ^\sigma \! \chi^{mot}_G(  (X \amalg^G X)^{(n)}).
$$
By Lemma $\ref{ang}$, the right hand side of this is precisely $\ \! ^\sigma \! a_n^G( \chi^{mot}(X) +^G \chi^{mot}(X))$, as required.
\end{proof}

\begin{thm}\label{twistang}
 Let $q \in \widehat{\mathrm{W}}(k)$ and let $\sigma: \Gal_k \to \Z/2\Z$ be as before. Then we can compute
$$
\ \! ^\sigma \! a_n^G(q+^Gq) = a_n( q \cdot \mathrm{Tr}(K)). 
$$
\end{thm}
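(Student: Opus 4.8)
The plan is to avoid a direct computation in $\widehat{\mathrm{W}}(k)$ and instead deduce the identity from two facts already available: the compatibility of $\mathrm{Tr}$ with symmetric powers on $BQ_k$ (Corollary \ref{bqk}) and the twisting formula of Corollary \ref{chimotxk}. The idea is that both sides of the asserted equality arise as the value of $\mathrm{Tr}$ on the $n$-th symmetric power of one explicit $0$-dimensional variety, and that this variety lands inside $BQ_k$, where everything is already known. Throughout we work with $0$-dimensional varieties and $\mathrm{Tr} = \chi^{mot}|_{K_0(\Et_k)}$, so the evident dimension-zero analogue of Corollary \ref{chimotxk} is available in characteristic $\neq 2$ (only for varieties of positive dimension does one need characteristic $0$).

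First I would invoke Lemma \ref{surjquad}: since $\mathrm{Tr}\colon Q_k \to \widehat{\mathrm{W}}(k)$ is surjective, pick $Y \in Q_k$ with $\mathrm{Tr}(Y) = q$. Because $Q_k \subseteq BQ_k$, Corollary \ref{bqk} gives $\mathrm{Tr}(Y^{(n)}) = a_n(\mathrm{Tr}(Y))$ for all $n$, which is exactly the hypothesis needed to apply Corollary \ref{chimotxk} to $X = Y$. That corollary then yields
$$
\mathrm{Tr}\big( (Y \times_k \Spec(K))^{(n)} \big) = {}^{\sigma}\! a_n^G\big( \mathrm{Tr}(Y) +^G \mathrm{Tr}(Y) \big) = {}^{\sigma}\! a_n^G(q +^G q).
$$

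Second, I would evaluate the left-hand side a different way. On the generators $[L]$ of $Q_k$ (with $L = k$ or $L/k$ quadratic) one checks that $Y \times_k \Spec(K)$ defines a class in $BQ_k$: for $L = k$ one gets $[\Spec K]$; for $L = K$ one gets $[\Spec(K \otimes_k K)] = [\Spec(K \times K)] = 2[\Spec K]$; and for $L/k$ a quadratic field with $L \neq K$, $L \otimes_k K$ is the biquadratic field $L\cdot K = k(\sqrt{d_L},\sqrt{d_K})$. Since $BQ_k$ is an abelian group, $Y \times_k \Spec(K) \in BQ_k$, so Corollary \ref{bqk} applies and, using that $\mathrm{Tr}$ is a ring homomorphism,
$$
\mathrm{Tr}\big( (Y \times_k \Spec(K))^{(n)} \big) = a_n\big( \mathrm{Tr}(Y \times_k \Spec(K)) \big) = a_n\big( \mathrm{Tr}(Y) \cdot \mathrm{Tr}(K) \big) = a_n(q \cdot \mathrm{Tr}(K)).
$$
Comparing the two displays proves the theorem.

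The one point requiring care — and the place where a reader might object — is the use of Corollary \ref{chimotxk}, which is phrased with $\chi^{mot}$ on $K_0(\mathrm{Var}_k)$ and hence in characteristic $0$; the remedy is to note that its proof (through Lemmas \ref{twistX}, \ref{symmtwist}, \ref{ang} and Theorem A.13 of \cite{YZ}) only touches the $0$-dimensional classes $Y$, $\Spec K$ and their products, for which the equivariant trace form and its Galois twists are defined in characteristic $\neq 2$, so the required instance of the corollary is valid. I expect this bookkeeping to be the only real obstacle. A self-contained algebraic alternative would instead need a closed formula for ${}^{\sigma}(r \times^G r)$ with $r \in \widehat{\mathrm{W}}(k)$, required to handle the even-$n$ term $a_{n/2}(q) \times^G a_{n/2}(q)$ of $a_n^G(q +^G q)$; but ${}^{\sigma}(r \times^G r)$ depends on more than the Witt class $r$ (it sees the $\Lambda^2$ of a representing quadratic space, via the sign part of the swap action), so that route is genuinely more delicate, which is why I would prefer the variety-theoretic argument above.
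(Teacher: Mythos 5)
Your proof is correct and takes essentially the same approach as the paper: pick $A \in Q_k$ with $\mathrm{Tr}(A)=q$ via Lemma \ref{surjquad}, observe that $A\otimes_k K$ lies in $BQ_k$, and compute $\mathrm{Tr}\big((A\otimes_k K)^{(n)}\big)$ two ways using Corollary \ref{bqk} and Corollary \ref{chimotxk}. Your explicit verification on generators that $Y\times_k\Spec(K)\in BQ_k$ and your note on the characteristic bookkeeping (which the paper addresses in the remark preceding Lemma \ref{equivariantpstruc}) are useful elaborations but not a different argument.
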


\begin{proof}
By Lemma $\ref{surjquad}$, $\mathrm{Tr}: Q_k \to \widehat{\mathrm{W}}(k)$ is surjective, so there exists $A \in Q_k$ such that $\mathrm{Tr}(A) = q$.  Since $A \in Q_k$, $A \otimes_k K \in BQ_k$, and $\mathrm{Tr}(A \otimes_k K) = \mathrm{Tr}(A)\mathrm{Tr}(K) = q \cdot \mathrm{Tr}(K)$. 

By Corollary $\ref{bqk}$, we see that $a_n( q \cdot \mathrm{Tr}(K)) =\mathrm{Tr} ((A \otimes_k K)^{(n)})$. However, we can also apply Corollary $\ref{chimotxk}$ to obtain $ \mathrm{Tr}((A \otimes_k K)^{(n)}) = \ \! ^\sigma \! a_n^G(q+^Gq)$, which gives us the result.
\end{proof}

\begin{cor}\label{inductionquad}
Let $\mathrm{char}(k) = 0$ (resp. $\mathrm{char}(k) \neq 2$), and let $X\in K_0(\mathrm{Var}_k)$ (resp. $K_0(\Et_k)$) be such that that $\chi^{mot}( X^{(n)}) = a_n(\chi^{mot}(X))$ for all $n$. Then for any $K/k$, a quadratic field extension,
$$
\chi^{mot}( (X\times_k \Spec(K))^{(n)}) = a_n(\chi^{mot}(X\times_k\Spec(K))).
$$
\end{cor}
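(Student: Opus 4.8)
This is designed to be a formal consequence of the two immediately preceding results, so the plan is simply to chain them together and identify the two sides. Set $q := \chi^{mot}(X)$, which by hypothesis satisfies $\chi^{mot}(X^{(n)}) = a_n(q)$ for all $n$, so Corollary \ref{chimotxk} applies to $X$.

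First I would apply Corollary \ref{chimotxk} to rewrite the left-hand side: since $\chi^{mot}(X^{(n)}) = a_n(\chi^{mot}(X))$ for all $n$, we get
$$
\chi^{mot}\big((X \times_k \Spec(K))^{(n)}\big) = \ \! ^\sigma \! a_n^G(q +^G q).
$$
Next I would invoke Theorem \ref{twistang} with this same $q$, which gives $\ \! ^\sigma \! a_n^G(q +^G q) = a_n(q \cdot \mathrm{Tr}(K))$. Finally I would identify $q \cdot \mathrm{Tr}(K)$ with $\chi^{mot}(X \times_k \Spec(K))$: since $\chi^{mot}$ is a ring homomorphism and $\mathrm{Tr}(K) = \chi^{mot}(\Spec(K))$, we have $\chi^{mot}(X \times_k \Spec(K)) = \chi^{mot}(X)\,\chi^{mot}(\Spec(K)) = q \cdot \mathrm{Tr}(K)$. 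Combining the three equalities yields $\chi^{mot}((X \times_k \Spec(K))^{(n)}) = a_n(\chi^{mot}(X \times_k \Spec(K)))$, as desired.

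There is essentially no obstacle here: all the substantive work has been done in Corollary \ref{chimotxk} (which packages the twisting/descent identities $\ \! ^\sigma \! (X \amalg^G X) = [X]\cdot[\Spec(K)]$, the commutation of $(-)^{(n)}$ with twisting, and Theorem A.13 of \cite{YZ}) and in Theorem \ref{twistang} (which is where Corollary \ref{bqk} on biquadratic algebras and the surjectivity of $\mathrm{Tr}$ on $Q_k$ enter). The only point requiring the tiniest care is making sure the element $q$ used in Corollary \ref{chimotxk} and in Theorem \ref{twistang} is literally the same $\chi^{mot}(X)$, and that the forgetful-compatibility of $a_n^G(q+^Gq)$ over $a_n(q+q)$ is not needed — we work directly with the twisted element $\ \! ^\sigma \! a_n^G(q+^Gq)$ throughout.
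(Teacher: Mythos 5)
Your proposal is correct and takes essentially the same approach as the paper: apply Corollary \ref{chimotxk} to rewrite the left-hand side as $\ \! ^\sigma \! a_n^G(q+^Gq)$, then Theorem \ref{twistang} to convert this to $a_n(q\cdot\mathrm{Tr}(K))$, and finally identify $q\cdot\mathrm{Tr}(K)$ with $\chi^{mot}(X\times_k\Spec(K))$ using that $\chi^{mot}$ is a ring homomorphism. This matches the paper's proof step for step.
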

\begin{proof}
The induction hypothesis allows us to apply Corollary $\ref{chimotxk}$ to obtain 
$$
\chi^{mot}( (X \times_k \Spec(K))^{(n)}) = \ \! ^\sigma \! a_n^G(\chi^{mot}(X) +^G \chi^{mot}(X)).
$$
Theorem $\ref{twistang}$ then tells us that this is exactly $a_n\left( \chi^{mot}(X) \cdot \chi^{mot}(\Spec(K))\right)$, so the result follows since $\chi^{mot}$ is a ring homomorphism.
\end{proof}
\begin{cor}\label{MQRes}
The map $\mathrm{Tr}: MQ_k \to \widehat{\mathrm{W}}(k)$ satisfies $\chi^{mot}(A^{(n)}) = a_n( \mathrm{Tr}(A))$ for all $A \in MQ_k$ and for all $n$.
\end{cor}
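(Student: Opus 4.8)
The plan is to reduce the statement to a generating set of $MQ_k$ as an abelian group and then run an induction whose inductive step is exactly Corollary $\ref{inductionquad}$; throughout we work inside $K_0(\Et_k)$, on which $\chi^{mot}$ restricts to $\mathrm{Tr}$, so everything is valid for $\mathrm{char}(k) \neq 2$. Since $MQ_k$ is the subring of $K_0(\Et_k)$ generated by $Q_k$, and $Q_k$ is generated as an abelian group by $[k]$ together with the classes $[\Spec K]$ for $K/k$ a quadratic field extension, the ring $MQ_k$ is generated as an abelian group by the elements
\[
[\Spec(K_1 \otimes_k \cdots \otimes_k K_m)] = [\Spec K_1]\cdots[\Spec K_m],
\]
where $m \geq 0$ and each $K_i/k$ is a quadratic field extension (tensor products of étale algebras are étale, hence reduced, so no $(-)_{\mathrm{red}}$ is needed, and $m = 0$ gives $[\Spec k]$). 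By Lemma $\ref{subgroup}$, the set of $x \in K_0(\Et_k)$ satisfying $\chi^{mot}(x^{(n)}) = a_n(\mathrm{Tr}(x))$ for all $n$ is a subgroup, so it is enough to check that each of these generators lies in it.

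First I would treat the base case $m = 0$: one has $\Spec(k)^{(n)} = \Spec(k)$ and $a_n(\langle 1 \rangle) = \langle 1 \rangle$ (since $t_1 = 0$), so the claim holds; this is also a special case of the compatibility on $Q_k$ proven above. For the inductive step, suppose $X := \Spec(K_1 \otimes_k \cdots \otimes_k K_m)$ satisfies $\chi^{mot}(X^{(n)}) = a_n(\chi^{mot}(X))$ for all $n$. Applying Corollary $\ref{inductionquad}$ to $X$ and the quadratic field extension $K_{m+1}/k$ gives
\[
\chi^{mot}\bigl((X \times_k \Spec(K_{m+1}))^{(n)}\bigr) = a_n\bigl(\chi^{mot}(X \times_k \Spec(K_{m+1}))\bigr)
\]
for all $n$, and since $X \times_k \Spec(K_{m+1}) = \Spec(K_1 \otimes_k \cdots \otimes_k K_{m+1})$ this is precisely the required statement for the next generator. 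This establishes the claim for every generator, and Lemma $\ref{subgroup}$ then extends it to all of $MQ_k$.

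The real content is carried entirely by Corollary $\ref{inductionquad}$, and behind it the twisting results above, Theorem $\ref{twistang}$, and the biquadratic computation of Corollary $\ref{bqk}$. The only point requiring a little care in the present proof is the choice of induction variable: one should induct on the number $m$ of quadratic tensor factors, so that the hypothesis of Corollary $\ref{inductionquad}$ matches the inductive hypothesis on the nose. This avoids having to decompose a multiquadratic étale algebra into its constituent field factors, or to invoke facts about when an element becomes a square in a multiquadratic extension; once the generating set is chosen this way there is essentially no computation left to perform.
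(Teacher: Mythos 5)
Your proof is correct and follows essentially the same path as the paper's: both reduce to a generating set via Lemma~\ref{subgroup} and then run a one-variable induction whose inductive step is exactly Corollary~\ref{inductionquad}, peeling off one quadratic factor at a time. The only difference is cosmetic: the paper inducts on $j$ with $A$ a multiquadratic field extension of degree $2^j$ and writes $A = L \otimes_k K$, while you induct on the number of tensor factors, which makes the match with the hypothesis of Corollary~\ref{inductionquad} slightly more immediate but does not change the substance.
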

\begin{proof}
We can reduce to the case where $A/k$ is a multiquadratic extension of degree $2^j$, and proceed by induction on $j$. The result holds for $j=2$ by Lemma $\ref{bqk}$, so suppose it holds for $j' < j$. Since $A$ is multiquadratic, write $A = L \otimes K$, where $K/k$ is a quadratic extension, and $L/k$ is a multiquadratic extension with $[L:k] = 2^{j-1}$. By induction
$$
\mathrm{Tr}(L^{(n)}) = \chi^{mot}( \Spec(L)^{(n)}) = a_n\left(\chi^{mot}(\Spec(L))\right) = a_n(\mathrm{Tr}(L)).
$$
Applying Corollary $\ref{inductionquad}$ to $\Spec(L)$ gives us
$$
\mathrm{Tr}(A^{(n)}) = \chi^{mot} ((\Spec(L) \times_k \Spec(K))^{(n)}) = a_n\left(\chi^{mot}( \Spec(L) \times_k \Spec(K))\right) = a_n( \mathrm{Tr}(A)),
$$
which gives the result.
\end{proof}

\subsection{From multiquadratic algebras to all étale algebras}
In this section, fix $k$ to be a field of characteristic not $2$.
\begin{lemma}\label{dimension}
Let $A/k$ be a finite dimensional étale algebra with $\mathrm{dim}_k(A) = m$. Then
$$
\mathrm{rank}(\mathrm{Tr}(A^{(n)})) = {n+m-1 \choose m} = \mathrm{rank}\left(a_n(\mathrm{Tr}(A))\right).
$$
\end{lemma}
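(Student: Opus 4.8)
The plan is to compute the two ranks separately and see that each equals the number $\binom{n+m-1}{n}$ of degree-$n$ monomials in $m$ variables (equivalently, the number of size-$n$ multisets on an $m$-element set). For the left-hand side, recall that for any finite étale $k$-algebra $B$ one has $\mathrm{rank}(\mathrm{Tr}(B)) = \dim_k B = \#\Spec(B)(\kbar)$. Writing $A^{(n)}$ for the étale algebra with $\Spec(A^{(n)}) = \Spec(A)^{(n)}$ and using that the formation of symmetric powers commutes with the base change $k \to \kbar$, together with the isomorphism $\Spec(A)_{\kbar} \cong \coprod_{i=1}^{m}\Spec(\kbar)$, we see that $\Spec(A)^{(n)}_{\kbar}$ is a disjoint union of copies of $\Spec(\kbar)$ indexed by the size-$n$ multisets of $\{1,\dots,m\}$, so $\mathrm{rank}(\mathrm{Tr}(A^{(n)})) = \binom{n+m-1}{n}$. (More structurally: the composite $\mathrm{rank}\circ\mathrm{Tr}\colon K_0(\Et_k)\to\Z$ is the ring homomorphism $[X]\mapsto\#X(\kbar)$, it respects power structures because base change to $\kbar$ commutes with symmetric powers, and the resulting power structure on $\Z$ is the classical one $d\mapsto\binom{n+d-1}{n}$ appearing in Remark~\ref{questionoverC}.)

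For the right-hand side it suffices to show that $\mathrm{rank}\colon\widehat{\mathrm{W}}(k)\to\Z$ respects the power structure $a_\bullet$ of Corollary~\ref{finalpstruc} on the source and the classical power structure $b_n(d)=\binom{n+d-1}{n}$ on $\Z$; since $\mathrm{rank}(\mathrm{Tr}(A))=\dim_k A=m$, this yields $\mathrm{rank}(a_n(\mathrm{Tr}(A)))=b_n(m)=\binom{n+m-1}{n}$, matching the left-hand side. Because every element of $\widehat{\mathrm{W}}(k)$ is a $\Z$-linear combination of classes $\langle a\rangle$, Lemma~\ref{subgroup} applied with $\varphi=\mathrm{rank}$ reduces us to checking $\mathrm{rank}(a_n(\langle a\rangle))=b_n(\mathrm{rank}\langle a\rangle)=b_n(1)=1$ on the generators. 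This is immediate from the formula $a_n(\langle a\rangle)=\langle a^n\rangle+\tfrac{n(n-1)}{2}t_a$ of Corollary~\ref{maincor1}, since $\mathrm{rank}(\langle a^n\rangle)=1$ and $\mathrm{rank}(t_a)=\mathrm{rank}(\langle 2\rangle+\langle a\rangle-\langle 1\rangle-\langle 2a\rangle)=0$; Lemma~\ref{subgroup} then propagates the identity to all of $\widehat{\mathrm{W}}(k)$, in particular to $\mathrm{Tr}(A)$.

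I expect no serious obstacle here: both halves are formal once one knows that $\mathrm{rank}$ is a ring homomorphism annihilating each $t_a$ and that symmetric powers of finite $\kbar$-schemes realise the classical binomial power structure on $\Z$. The only point requiring a little care is verifying the hypothesis of Lemma~\ref{subgroup}, namely that $\mathrm{rank}(a_n(\langle a\rangle))=b_n(1)$ holds \emph{for every} $n$ simultaneously; this comes down precisely to the $n$-independence of $\mathrm{rank}(a_n(\langle a\rangle))$, i.e.\ to the vanishing $\mathrm{rank}(t_a)=0$, after which the lemma does the rest.
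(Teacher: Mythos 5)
Your proof is correct and does the job, but it takes a different route from the paper for the right-hand equality. The paper disposes of $\mathrm{rank}(a_n(\mathrm{Tr}(A)))$ by observing that $a_n(q)$ and McGarraghy's non-factorial symmetric power $S^n(q)$ have the same rank (since they differ only by $t_a$-terms, which have rank $0$), and then cites Proposition~3.5 of \cite{Mc} for $\mathrm{rank}(S^n(q))$. You instead verify directly that $\mathrm{rank}\colon\widehat{\mathrm{W}}(k)\to\Z$ respects the power structure $a_\bullet$ on the source and the classical binomial power structure on $\Z$: you check this on the generators $\langle a\rangle$ using $\mathrm{rank}(t_a)=0$, and then propagate to all of $\widehat{\mathrm{W}}(k)$ via Lemma~\ref{subgroup}. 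This is more self-contained (no appeal to \cite{Mc}) and makes explicit the tacit step in the paper's argument, namely why $a_n$ and $S^n$ have the same rank. For the left-hand equality both arguments reduce to the non-degeneracy of the trace form plus the dimension count for $A^{(n)}$; you additionally justify that count by passing to $\kbar$-points and counting multisets, which the paper simply asserts. One remark: you correctly produce the binomial coefficient $\binom{n+m-1}{n}=\binom{n+m-1}{m-1}$, whereas the paper's statement and proof both say $\binom{n+m-1}{m}$, which appears to be a typo (already visible for $m=1$, where $A^{(n)}=k$ has dimension $1$, not $n$).
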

\begin{proof}
Note that $\mathrm{rank}(\mathrm{Tr}(A^{(n)}))$ is given by the rank of the trace form on $A^{(n)}$. The trace form is non degenerate, so this is simply $\mathrm{dim}_k(A^{(n)}) ={n+m-1 \choose m}$. The second part follows since
$$
\mathrm{rank}(a_n(\mathrm{Tr}(A))) = \mathrm{rank}( S^n(\mathrm{Tr}(A))),
$$
where $S^n$ is the non-factorial symmetric power of \cite{Mc}, so this follows by Proposition 3.5 of \cite{Mc}. 
\end{proof}

Consider the Witt ring, $\mathrm{W}(k) = \widehat{\mathrm{W}}(k)/ \langle \mathbb{H} \rangle$.  For $q \in \widehat{\mathrm{W}}(k)$, write $\overline{q}$ to mean the image in $\mathrm{W}(k)$. 

\begin{lemma}
For $n$ a non negative integer, the assignment 
\begin{align*}
\{\text{Étale algebras over $k$}\} &\to \mathrm{W}(k)\\
[A] &\mapsto \overline{a_n(\mathrm{Tr}_A) - \mathrm{Tr}_{A^{(n)}}}
\end{align*}
gives rise to a Witt-valued invariant, in the sense of Definition 1.1 of \cite{GMS}.
\end{lemma}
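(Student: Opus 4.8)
The plan is to verify directly that the displayed assignment is a morphism of functors on the category of field extensions of the base field, which is precisely what Definition 1.1 of \cite{GMS} demands of a Witt-valued invariant. The source here is the functor sending a field $K$ to the set of isomorphism classes of finite étale $K$-algebras (it is a functor because base change of an étale algebra is étale and preserves dimension; equivalently one works one rank at a time with $\mathrm{Et}_m=H^1(-,S_m)$ and takes the disjoint union over $m$, which loses nothing since the assignment respects the decomposition by rank), the target is $K\mapsto\mathrm{W}(K)$ with the usual restriction homomorphisms, and the horizontal map sends $[A]$ to the class $\overline{a_n(\mathrm{Tr}_A)-\mathrm{Tr}_{A^{(n)}}}$ in $\mathrm{W}(K)$. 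First I would dispatch well-definedness on isomorphism classes: if $A\cong A'$ as $K$-algebras then $\mathrm{Tr}_A\cong\mathrm{Tr}_{A'}$ and $\Spec(A)^{(n)}\cong\Spec(A')^{(n)}$, hence $A^{(n)}\cong (A')^{(n)}$ and both terms agree in $\widehat{\mathrm{W}}(K)$.

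The real content is the commutativity of the base-change square: for a field extension $L/K$, an étale $K$-algebra $A$, and the restriction map $\rho_{L/K}\colon\widehat{\mathrm{W}}(K)\to\widehat{\mathrm{W}}(L)$, one must show
\[
a_n\bigl(\mathrm{Tr}_{A\otimes_K L}\bigr)-\mathrm{Tr}_{(A\otimes_K L)^{(n)}}=\rho_{L/K}\bigl(a_n(\mathrm{Tr}_A)-\mathrm{Tr}_{A^{(n)}}\bigr).
\]
I would reduce this to three compatibilities. First, the trace form commutes with base field extension, $\mathrm{Tr}_{A\otimes_K L}\cong\mathrm{Tr}_A\otimes_K L$, so $\rho_{L/K}(\mathrm{Tr}_A)=\mathrm{Tr}_{A\otimes_K L}$. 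Second, symmetric powers commute with base change: forming $(\Spec A)^n/S_n$ commutes with the faithfully flat base change $\Spec L\to\Spec K$, since passing to $S_n$-invariants commutes with flat base change, so $\Spec(A\otimes_K L)^{(n)}\cong\Spec(A)^{(n)}\times_{\Spec K}\Spec L$; combining this with the first point gives $\rho_{L/K}(\mathrm{Tr}_{A^{(n)}})=\mathrm{Tr}_{(A\otimes_K L)^{(n)}}$. Third, the operations $a_n$ commute with $\rho_{L/K}$: by Corollary $\ref{maincor1}$ the functions $a_n$ on $\widehat{\mathrm{W}}$ are determined by the additive rule of Corollary $\ref{powerfns}$ together with $a_n(\langle\alpha\rangle)=\langle\alpha^n\rangle+\frac{n(n-1)}{2}t_\alpha$, where $t_\alpha=\langle 2\rangle+\langle\alpha\rangle-\langle 1\rangle-\langle 2\alpha\rangle$, and $\rho_{L/K}$ is a ring homomorphism carrying $\langle\alpha\rangle$ to $\langle\alpha\rangle$, hence $t_\alpha$ to $t_\alpha$, and commuting with the recursion, so $a_n\circ\rho_{L/K}=\rho_{L/K}\circ a_n$. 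Chaining the three identities yields the displayed equality, and projecting to $\mathrm{W}(L)$ completes the naturality check, so the assignment is an invariant.

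I do not expect a genuine obstacle: the whole lemma amounts to the observation that each of the three ingredients — the trace form, the $n$-th symmetric power, and the power operation $a_n$ — is compatible with field extension. The one point to handle with care is the third, which must be deduced from the explicit formula of Corollary $\ref{maincor1}$ rather than from any property of $\chi^{mot}$, both because $\chi^{mot}$ is unavailable in positive characteristic and because we want a statement purely about $\widehat{\mathrm{W}}$ and étale algebras; equivalently, one notes that the power operation $a_n$ is given, via Corollary $\ref{maincor1}$ and the additivity of Corollary $\ref{powerfns}$, by integral universal formulas in the generators $\langle\alpha\rangle$, hence commutes with every ring homomorphism induced by a field extension. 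With this in hand the lemma is immediate, and it feeds into the subsequent reduction extending compatibility from multiquadratic algebras (Corollary $\ref{MQRes}$) to all étale algebras via the structure theory of \cite{GMS}.
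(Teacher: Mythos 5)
Your proof is correct and follows essentially the same route as the paper, which simply observes that $A\mapsto a_n(\mathrm{Tr}_A)$ and $A\mapsto \mathrm{Tr}_{A^{(n)}}$ are each separately Witt-valued invariants and hence so is their difference; you have merely unpacked what makes each piece functorial (trace form, symmetric power, and $a_n$ all commute with base change, the last via the universal integral formulas of Corollaries $\ref{powerfns}$ and $\ref{maincor1}$). No gap.
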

\begin{proof}
The result follows if this is true for both $\overline{a_n(\mathrm{Tr}_A)}$ and $\overline{\mathrm{Tr}_{A^{(n)}}}$, but this is clear.
\end{proof}
\begin{cor}
For any finite dimensional étale algebra over $k$, we have an equality in $\mathrm{W}(k)$:
$$
\overline{a_n(\mathrm{Tr}(A))} = \overline{\mathrm{Tr}(A^{(n)})} \in \mathrm{W}(k).
$$
\end{cor}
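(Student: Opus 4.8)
The plan is to recognize the two sides as the two halves of a single \emph{Witt-valued invariant} of étale algebras, to check that this invariant dies on a rich enough family of algebras — the multiquadratic ones — and then to feed this into the detection argument of \cite{GMS}. Concretely, fix the symmetric-power exponent $n$ and a degree $m$, and consider
$$
I_{m,n}\colon\ [A]\ \longmapsto\ \overline{a_n(\mathrm{Tr}(A))}-\overline{\mathrm{Tr}(A^{(n)})}\ \in\ \mathrm{W}(-),
$$
on degree-$m$ étale algebras. By the preceding lemma this is a Witt-valued invariant in the sense of Definition 1.1 of \cite{GMS}. Beyond that lemma the only point to note is compatibility with an arbitrary extension $K/k$ of fields of characteristic $\neq 2$: forming the trace form, forming the symmetric power $A^{(n)}$, and applying the operations $a_n$ (given by the universal formulas of Definition \ref{zkdefn}, which make sense over any field) all commute with $-\otimes_k K$, so $I_{m,n}$ is genuinely an invariant over all of $\mathrm{Fields}_{/k}$.

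I would then record that $I_{m,n}$ vanishes on multiquadratic étale algebras over every such field. Indeed, for any $K$ with $\mathrm{char}(K)\neq 2$, Corollary \ref{MQRes} gives $\mathrm{Tr}(A^{(n)})=\chi^{mot}(A^{(n)})=a_n(\mathrm{Tr}(A))$ in $\widehat{\mathrm{W}}(K)$ for every $A\in MQ_K$; in particular this applies to every multiquadratic étale $K$-algebra and to every finite product of quadratic étale $K$-algebras, so $I_{m,n,K}$ kills all such classes.

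Finally I would invoke the argument of \cite{GMS}: a Witt-valued invariant of degree-$m$ étale algebras that vanishes, over all extensions, on the multiquadratic algebras must vanish identically. The mechanism is that a degree-$m$ étale algebra is classified by a class in $H^1(-,S_m)$, that the mod-$2$ cohomology of $S_m$ — and therefore, through the fundamental filtration of the Witt ring, every Witt-valued invariant — is detected on the elementary abelian $2$-subgroups of $S_m$, and that the torsors under such subgroups are precisely the products of copies of the base field with multiquadratic field extensions; this is exactly the reduction used in \cite{GMS} to compute the $\mathrm{W}$-module of invariants of étale algebras. Applying it to $I_{m,n}$ yields $\overline{a_n(\mathrm{Tr}(A))}=\overline{\mathrm{Tr}(A^{(n)})}$ in $\mathrm{W}(k)$ for every finite-dimensional étale $A/k$ and every $n$.

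The step I expect to be the real obstacle is this last one: identifying precisely which statement of \cite{GMS} is being used, and checking that its hypotheses genuinely carry out the reduction from ``all étale algebras'' to ``multiquadratic étale algebras'' for \emph{Witt}-valued rather than merely mod-$2$ cohomological invariants. Everything else is formal; in particular no new quadratic-form computation is needed here, and the rank identity of Lemma \ref{dimension} plays no role in the present corollary — it enters only afterwards, to upgrade this equality in $\mathrm{W}(k)$ to the equality in $\widehat{\mathrm{W}}(k)$.
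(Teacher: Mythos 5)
Your proposal is correct and takes essentially the same route as the paper: both recognize $\overline{a_n(\mathrm{Tr}(A))} - \overline{\mathrm{Tr}(A^{(n)})}$ as a Witt-valued invariant of étale algebras, note its vanishing on multiquadratic algebras via Corollary~\ref{MQRes}, and then cite Theorem~29.1 of \cite{GMS} to conclude. The paper's proof is more terse, leaving the detection mechanism (elementary abelian $2$-subgroups of $S_m$) and the compatibility with field extensions implicit, but these are exactly the points you correctly unpack.
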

\begin{proof}
This statement is equivalent to saying that the Witt-valued invariant from the previous lemma is $0$ for all étale algebras. For multiquadratic étale algebras, this follows by Corollary $\ref{MQRes}$. For $A/k$ a general étale algebra, we apply Theorem 29.1 of \cite{GMS} to get the result.
\end{proof}

\begin{cor}
For any finite dimensional étale algebra over $k$, we have an equality in $\widehat{\mathrm{W}}(k)$:
$$
a_n(\mathrm{Tr}(A)) = \mathrm{Tr}(A^{(n)}).
$$
\end{cor}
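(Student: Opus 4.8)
The plan is to combine the two immediately preceding results — the equality $\overline{a_n(\mathrm{Tr}(A))} = \overline{\mathrm{Tr}(A^{(n)})}$ in $\mathrm{W}(k)$ and the equality of ranks $\mathrm{rank}(a_n(\mathrm{Tr}(A))) = \binom{n+m-1}{m} = \mathrm{rank}(\mathrm{Tr}(A^{(n)}))$ — using the elementary structure of the map $\widehat{\mathrm{W}}(k) \to \mathrm{W}(k)$. First I would recall the fundamental fact that the kernel of the quotient map $\widehat{\mathrm{W}}(k) \to \mathrm{W}(k) = \widehat{\mathrm{W}}(k)/\langle\mathbb{H}\rangle$ is exactly $\Z \cdot \mathbb{H}$, i.e. that the only relations introduced are integer multiples of the hyperbolic form; equivalently, the map $\widehat{\mathrm{W}}(k) \to \mathrm{W}(k) \times \Z$, $q \mapsto (\overline q, \mathrm{rank}(q))$, is injective.

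Granting this, the argument is short. Set $x = a_n(\mathrm{Tr}(A)) - \mathrm{Tr}(A^{(n)}) \in \widehat{\mathrm{W}}(k)$. The previous corollary says $\overline x = 0$ in $\mathrm{W}(k)$, so $x = c\,\mathbb{H}$ for some $c \in \Z$. Taking ranks and using the rank computation from Lemma \ref{dimension}, we get $0 = \mathrm{rank}(x) = 2c$, hence $c = 0$ and $x = 0$. This gives $a_n(\mathrm{Tr}(A)) = \mathrm{Tr}(A^{(n)})$, as claimed. Finally, since $\mathrm{Tr} = \chi^{mot}|_{K_0(\Et_k)}$ and the power structure functions on $K_0(\Et_k)$ are the $S_n$, this is exactly the statement that the restriction of $\chi^{mot}$ to $K_0(\Et_k)$ respects the power structures, recovering Corollary \ref{maincor2}.

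There is no real obstacle here beyond invoking the right input: the only thing one must be careful about is the precise description of $\ker(\widehat{\mathrm{W}}(k) \to \mathrm{W}(k))$, which is standard (see e.g. the discussion of the Grothendieck--Witt ring versus the Witt ring in \cite{La}, where $\widehat{\mathrm{W}}(k)$ is the fibre product of $\mathrm{W}(k)$ and $\Z$ over $\Z/2\Z$). Everything else is a one-line rank count, so the proof is immediate.
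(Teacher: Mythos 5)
Your proof is correct and is essentially the same as the paper's: both use the preceding Witt-ring equality to deduce $a_n(\mathrm{Tr}(A)) - \mathrm{Tr}(A^{(n)}) = c\,\mathbb{H}$ for some $c \in \Z$, then invoke the rank computation of Lemma \ref{dimension} to force $c=0$. Your proposal is merely a bit more explicit about why $\ker(\widehat{\mathrm{W}}(k) \to \mathrm{W}(k)) = \Z\cdot\mathbb{H}$, which the paper leaves implicit.
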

\begin{proof}
The previous corollary tells us that this holds in $\mathrm{W}(k)$, so in $\widehat{\mathrm{W}}(k)$, we have 
$$
a_n(\mathrm{Tr}(A)) = \mathrm{Tr}(A^{(n)}) + m\mathbb{H},
$$
for some integer $m$. Applying Lemma $\ref{dimension}$ gives us $m=0$, as required. 
\end{proof}

\begin{cor}\label{maincor2}
Give $K_0(\Et_k)$ the power structure defined by symmetric powers, and give $\widehat{\mathrm{W}}(k)$ the power structure defined by the $a_n$ functions. Then the trace homomorphism
\begin{align*}
K_0(\Et_k) &\to \widehat{\mathrm{W}}(k)\\
[A] &\mapsto [\mathrm{Tr}_A],
\end{align*}
respects the power structures on these rings.
\end{cor}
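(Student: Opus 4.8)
The plan is to deduce the statement directly from the identity $a_n(\mathrm{Tr}(A)) = \mathrm{Tr}(A^{(n)})$ established in the preceding corollary for every finite dimensional étale $k$-algebra $A$ and every $n$, combined with the subgroup criterion of Lemma \ref{subgroup}. Recall from the discussion immediately preceding Lemma \ref{subgroup} that a ring homomorphism $\varphi \colon R \to R'$ between rings equipped with power structures respects those power structures if and only if $\varphi(a_n(r)) = b_n(\varphi(r))$ for all $r \in R$ and all $n \geq 0$, where $a_i$ and $b_i$ are the associated families of functions from Corollary \ref{powerfns}. Here $R = K_0(\Et_k)$ carries the power structure whose functions are the symmetric powers $S_n$, characterised by $S_n([\Spec A]) = [\Spec(A^{(n)})]$, and $R' = \widehat{\mathrm{W}}(k)$ carries the power structure of Corollary \ref{finalpstruc}, whose functions are the $a_n$.

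First I would observe that $K_0(\Et_k)$ is generated as an abelian group by the classes $[\Spec A]$ with $A$ a finite étale $k$-algebra; this is immediate from the definition of $K_0(\Et_k)$ as the subring of $K_0(\mathrm{Var}_k)$ generated by $0$-dimensional varieties, since the scissor relations write the class of such a variety as the sum of the classes of its connected components.

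Next, for each such generator $[\Spec A]$ and each $n$, the preceding corollary gives $\mathrm{Tr}\bigl(S_n([\Spec A])\bigr) = \mathrm{Tr}(A^{(n)}) = a_n(\mathrm{Tr}(A)) = a_n\bigl(\mathrm{Tr}([\Spec A])\bigr)$, so $\mathrm{Tr}$ is compatible with the power-structure operations on every element of a generating set of $K_0(\Et_k)$. By Lemma \ref{subgroup}, the set of elements on which $\mathrm{Tr}$ is compatible with the power operations is an abelian subgroup of $K_0(\Et_k)$; as it contains a generating set, it is all of $K_0(\Et_k)$. Applying the criterion recalled above, $\mathrm{Tr}$ respects both power structures, which is exactly the assertion of the corollary.

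I do not expect a genuine obstacle at this stage: the substantive content — the explicit form of the $a_n$ on the generators $\langle a \rangle$, the verification that these define an honest power structure on $\widehat{\mathrm{W}}(k)$, and the étale-algebra identity itself (via the biquadratic computation, the Galois-twisting argument from the appendix of \cite{YZ}, and the invariants-theoretic input of \cite{GMS}) — has already been carried out in the earlier results. The only points needing care are the routine checks that the symmetric-power functions $S_n$ genuinely are the defining functions of the power structure on $K_0(\Et_k)$ in the sense of Corollary \ref{powerfns}, and that Lemma \ref{subgroup} applies verbatim with $\varphi = \mathrm{Tr}$; both are straightforward.
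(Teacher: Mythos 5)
Your argument is correct and follows essentially the same route as the paper: deduce the corollary from the identity $a_n(\mathrm{Tr}(A)) = \mathrm{Tr}(A^{(n)})$ established for every finite étale $k$-algebra $A$ in the preceding corollary. The only difference is that you make explicit the small reduction the paper leaves implicit, namely that the classes $[\Spec A]$ generate $K_0(\Et_k)$ as an abelian group and that Lemma \ref{subgroup} then extends the compatibility to arbitrary (possibly virtual) elements; this is a careful rendering of the same argument rather than a different one.
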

\begin{proof}
Lemma $\ref{subgroup}$ implies that this is true if and only if $a_n(\mathrm{Tr}(A)) = \mathrm{Tr}(A^{(n)})$, which is the corollary above.
\end{proof}

\end{document}